\numberwithin{equation}{section}
\newcommand{\R}{\mathbb{R}}
\newcommand{\Z}{\mathbb{Z}}
\newcommand{\test}{\phi}
\newcommand{\seq}[1]{\left\{#1\right\}}
\newcommand{\abs}[1]{\left|#1\right|}
\newcommand{\eps}{\varepsilon}
\DeclareMathOperator*{\sgn}{sign}
\newcommand{\sign}[1]{\sgn\left(#1\right)}
\newcommand{\signe}[1]{\mathrm{sign}_\varepsilon\left(#1\right)}
\newcommand{\norm}[1]{\left\|#1\right\|}
\newcommand{\car}[1]{\mathds{1}_{#1}}
\newcommand{\Dm}{D_-}
\newcommand{\Dp}{D_+}
\newcommand{\Dx}{{\Delta x}}
\newcommand{\A}{\mathcal{A}}
\newcommand{\conv}[1]{\stackrel{(#1)}{\star}}
\newcommand{\Lip}{\mathrm{Lip}}
\newcommand{\Qeps}{Q_{\eps,r,r_0}}
\newcommand{\Eeps}{\mathcal{E}_{\eps,r,r_0}^{\Dx}}
\newcommand{\BEeps}{\overline{\mathcal{E}}_{\eps,r,r_0}^{\Dx,\tau}}
\newcommand{\term}{\mathscr{T}}
\theoremstyle{remark}\newtheorem{remark}{Remark}[section]
\theoremstyle{plain}\newtheorem{lemma}{Lemma}[section]
\newtheorem{estimate}{Estimate}[section]
\newtheorem{theorem}{Theorem}[section]
\theoremstyle{definition}\newtheorem{definition}{Definition}[section]
\title[Convergence rate for degenerate parabolic equations]
{On the convergence rate of finite difference methods for degenerate  
convection-diffusion equations in several space dimensions} 
\author[K. H. Karlsen]{K. H. Karlsen} \address[Kenneth Hvistendahl Karlsen]
{Department of Mathematics\\
University of Oslo\\
P.O.~Box 1053, Blindern\\
N-0316 Oslo, Norway.}
\email[]{kennethk@math.uio.no}
\author[N. H. Risebro]{N. H. Risebro} \address[Nils Henrik Risebro] 
{Department of Mathematics\\
University of Oslo\\
P.O.~Box 1053, Blindern\\
N-0316 Oslo, Norway.}
\email[]{nilshr@math.uio.no}
\author[E. B. Storr\o{}sten]{E. B. Storr\o{}sten} \address[Erlend
Briseid Storr\o{}sten] 
{Department of Mathematics\\
University of Oslo\\
P.O.~Box 1053, Blindern\\
N-0316 Oslo, Norway.}
\email[]{erlenbs@math.uio.no}
\date{\today}
\subjclass[2010]{Primary: 65M06, 65M15; Secondary: 35K65, 35L65}
\keywords{Degenerate convection-diffusion equations, 
entropy conditions, finite difference methods, error estimates}
\begin{document}

\begin{abstract}
We analyze upwind difference methods for strongly degenerate 
convection-diffusion equations in several spatial dimensions. 
We prove that the local $L^1$-error between the exact and numerical solutions 
is $\mathcal{O}\bigl(\Dx^{2/(19+d)}\bigr)$, where $d$ is the
spatial dimension and $\Dx$ is the grid size.  The 
error estimate is robust with respect to vanishing 
diffusion effects. The proof makes effective use of specific kinetic
formulations of the difference method and the convection-diffusion equation. 
This paper is a continuation of \cite{KRS2014}, in which 
the one-dimensional case was examined using 
the Kru{\v{z}}kov-Carrillo entropy framework.
\end{abstract}

\maketitle

\tableofcontents

\section{Introduction}
The design of numerical methods for convection-diffusion 
problems is important for many applications in science and engineering. 
It is especially challenging to construct 
accurate methods for nonlinear problems in 
which the ``diffusion part" is small or vanishing, relative to 
the ``convection part" of the problem. Connected to this is the  
difficult problem of deriving error estimates for numerical methods 
that are robust in the singular limit as the diffusion coefficient 
vanishes, thereby avoiding the usual exponential growth of error constants. 

In this paper we are interested in deriving error estimates for a class of 
finite difference methods for nonlinear, possibly strongly 
degenerate, convection-diffusion problems of the form
\begin{equation}
  \begin{cases}\label{eq:CD}
    \partial_tu + \nabla \cdot f(u) = \Delta A(u), & (t,x) \in \Pi_T, \\
    u(0,x) = u_0(x), & x \in \R^d,
  \end{cases}
\end{equation}
where $\Pi_T = (0,T) \times \R^d$, $T>0$, $d\ge 1$, 
and $u:\Pi_T \rightarrow \R$ is the
unknown function that is sought. The initial datum $u_0$ is
an integrable and bounded function, while the \emph{flux} 
function $f:\R \rightarrow \R^d$ and the \emph{diffusion} 
function $A:\R\rightarrow \R$ satisfy the standing assumptions
\begin{equation}\label{assump:fluxAndDiff}
	\text{$f,A$ locally $C^1$; $A(0) = 0$; $A$ is nondecreasing.}
\end{equation}

By \emph{strongly degenerate} it is meant that we allow for $A'(u) = 0$ for
all $u$ in some interval $[\alpha,\beta] \subset \mathbb{R}$. The
resulting class of equations therefore contains parabolic and 
hyperbolic equations, as well as a mix thereof. In the 
nondegenerate (uniformly parabolic) case $A'(\cdot)>0$, it is 
well known that \eqref{eq:CD} admits a unique classical solution. 
On the other hand, for strongly degenerate equations 
with discontinuous solutions, the well-posedness is 
ensured only in a class of weak solutions 
satisfying an entropy condition. The following result is 
known: For $u_0\in L^1(\R^d)\cap L^\infty(\R^d)$, there
exists a unique solution $u\in C((0,T);L^1(\R^d))$, $u\in
L^\infty(\Pi_T)$ of \eqref{eq:CD} 
such that $\partial_x A(u)\in L^2(\Pi_T)$ and for all 
convex functions $S$ with 
$q_S'=S' f'$ and $r_S'=S'A'$,
\begin{equation*}
	\partial_t S(u) + \nabla\cdot q_S(u) 
	- \Delta r_S(u)\le 0, \quad 
	\text{weakly on $[0,T)\times \R^d$}.
\end{equation*}
These inequalities are referred to as entropy 
inequalities and the corresponding 
solution is called an entropy solution.  

For conservation laws ($A'\equiv 0$), the well-posedness of
entropy solutions is a celebrated result due to Kru{\v{z}}kov
\cite{Kruzkov:1970kx}. Carrillo \cite{Carrillo:1999hq} extended this result to
degenerate parabolic problems such as \eqref{eq:CD}.  
For uniqueness of entropy solutions in the $BV$ class, 
see \cite{VolpertHudjaev1969,Wu:1989ve}.
An alternative well-posedness theory, based on the 
so-called kinetic formulation, was developed 
by Lions, Perthame, and Tadmor \cite{Lions:1994qy} and 
Chen and Perthame \cite{ChenPerthame2003}. 
We refer to \cite{Andreianov:2012fk,Dafermos:2010fk} 
for an overview of the relevant literature 
on hyperbolic and mixed hyperbolic-parabolic problems. 

In this paper we derive error estimates for 
numerical approximations of entropy
solutions to convection-diffusion equations. Convergence results, 
without error estimates, have been obtained for difference
methods \cite{EvjeKarlsen2000,Evje:2000ix,Karlsen:2001ul}; finite volume methods
\cite{Eymard:2002nx,Andreianov:2009kx}; splitting 
methods \cite{Holden:2010fk}; and BGK approximations
\cite{Aregba-Driollet:2003, Bouchut:2000dp}, to mention a few references. 
For a posteriori error estimates for finite volume methods, see
\cite{Ohlberger:2001oq}.

We are herein interested in estimating the error committed by a class of 
monotone difference methods. The monotone methods 
make use of an upwind discretization of the convection term and 
a centred discretization of the parabolic term. 
For notational simplicity in the introduction, let us 
assume $f^{i,\prime}(\cdot)\ge 0$ and consider the prototype
(semi-discrete) difference method
\begin{align*}
  \frac{d}{dt} u_\alpha 
  & + \sum_{i=1}^d 
  \frac{f^i(u_\alpha)-f^i(u_{\alpha-e_i})}{\Dx} 
  = \sum_{i=1}^d
  \frac{A(u_{\alpha+e_i})-2A(u_\alpha)+A(u_{\alpha-e_i})}{\Dx^2},
\end{align*}
where $\alpha=(\alpha_1,\dots,\alpha_d)\in \Z^d$ is a multi-index, $e_i$ is the $i$th unit 
vector in $\R^d$, and $\Dx > 0$ is the spatial grid size. Although our methods 
are semi-discrete, i.e., not discretized in time, 
the results and proofs can be adjusted to cover some 
fully discrete methods, such as the implicit method analyzed 
in \cite{EvjeKarlsen2000}. We refer to \cite{KRS2014} 
for a discussion of this topic when $d=1$.

Denote by $u_\Dx$ the piecewise constant 
interpolant linked to $u_\alpha$.   
The goal is to determine a number 
(convergence rate) $\gamma>0$ such that 
\begin{equation}\label{eq:rate1}
  \norm{u_\Dx(t,\cdot)-u(t,\cdot)}_{L^1} 
  \le C \Dx^\gamma, 
  \qquad \text{($u_0\in BV$)},
\end{equation}
for some constant $C>0$ independent 
of $\Dx$ and (the smallness of) $A'$. 

In the purely hyperbolic case ($A'\equiv 0$), a prominent result due
to Kuznetsov \cite{Kuznetsov} says that $\gamma$ is 
$1/2$ for monotone difference methods, as well as 
for the vanishing viscosity method. Influenced 
by \cite{Kuznetsov}, a number of works have 
further developed the ``Kru{\v{z}}kov-Kuznetsov"  error 
estimation theory for conservation laws.  We refer to
\cite{Bouchut:1998ys,Cockburn:2003ys} for an overview of the relevant results. 
Making use of the kinetic formulation, Perthame \cite{Perthame1998} 
provided an alternative route to error estimates.

With regard to convection-diffusion equations \eqref{eq:CD} 
with $A'(\cdot)\ge 0$, the subject of error estimates 
is significantly more difficult. It is only recently 
that there has been some progress. 
The simplest case is the vanishing viscosity method.
Denote by $u^\eta$ the solution of the uniformly parabolic equation 
\begin{equation}\label{intro:vv}
	u^\eta_t + \nabla \cdot f(u^\eta) 
	= \Delta A^\eta(u^\eta),  
	\qquad A^\eta(u) = A(u) + \eta u,
\end{equation}
where $\eta > 0$ is a (small) viscosity parameter.  
We have the following error estimate 
for the viscosity approximation $u^\eta$:
\begin{equation}\label{eq:VV-error}
	\norm{u^\eta(\cdot,t)-u(\cdot,t)}_{L^1}
	\le C \, \sqrt{\eta}, \qquad \text{($u_0\in BV$)},
\end{equation}
where $u$ is the entropy solution of \eqref{eq:CD}.
A ``Kru{\v{z}}kov-Kuznetsov"  type proof of this result is given 
in \cite{EvjeKarlsen2002}, see also \cite{Eymard:2002eu} 
for a boundary value problem. 
The error bound \eqref{eq:VV-error} can also be seen as an 
outcome of continuous dependence 
estimates \cite{Cockburn:2003ys,Chen:2005wf} or 
the kinetic formulation \cite{Chen:2006oy,Makridakis:2003aa}.

For conservation laws, the error estimate \eqref{eq:VV-error} 
for the viscous equation reveals what to expect for 
monotone difference methods \cite{Kuznetsov}.
This suggestive link breaks down for degenerate convection-diffusion 
equations \eqref{eq:CD}, cf.~\cite{KRS2014}, a fact that may foreshadow 
added difficulties coming from a second order 
operator. Indeed, for general $A$ satisfying \eqref{assump:fluxAndDiff} and 
in one spatial dimension, the work \cite{KKR2012} established \eqref{eq:rate1} 
with $\gamma=1/11$, a rate that was recently improved 
to $\gamma=1/3$ in \cite{KRS2014}. Although $\gamma=1/3$ is the 
best available rate at the moment, its optimality is unknown and also 
far from the convergence rate $\gamma=1/2$ known to be optimal 
for conservation laws. But in spite of that, with a linear diffusion 
function $A$, the convergence rate improves 
to $\gamma=1/2$ \cite{KKR2012,KRS2014}.

Apart from a result ($\gamma=1/2$) for linear convection-diffusion 
equations \cite{Chen:2006oy}, we are not aware of any results 
for multi-dimensional  equations \eqref{eq:CD} with a 
degenerate, nonlinear diffusion part. 
In this paper we establish \eqref{eq:rate1} with 
\begin{equation}\label{eq:rate2}
	\gamma=\frac{2}{19+d} 
	\quad \text{($d$ is the spatial dimension)},
\end{equation}
for general diffusion functions $A$ obeying \eqref{assump:fluxAndDiff}. 

A technical aspect of the proof of \eqref{eq:rate1} is that 
we are not applying the difference method 
directly to \eqref{eq:CD} but rather to \eqref{intro:vv}. 
Denoting the corresponding numerical solution 
by $u^\eta_\Dx$, we will prove that \eqref{eq:rate1} holds 
with $u_\Dx, u$ replaced by $u^\eta_\Dx,u^\eta$, respectively, and that 
the error constant $C$ is not depending on the parameter $\eta$. 
Our original claim \eqref{eq:rate1} follows from this, since 
we have the error estimate \eqref{eq:VV-error}. 

To help motivate the technical arguments coming later, let us 
lay out a ``high-level" overview of the analysis and 
some of the difficulties involved. As just alluded to, we will mostly work 
under the assumption $A' > 0$.  As a consequence no information is lost 
upon working with $A(u)$ instead of $u$ in the 
kinetic formulation (compare with the $u$-based 
formulation in \cite{ChenPerthame2003}).  
Set $B = A^{-1}$ and define $g$ by $g \circ A = f$. 
Then the solution $u$ of \eqref{eq:CD} satisfies
\begin{equation}\label{eq:KineticFormulation}
 B'(\zeta) \partial_t \chi_{A(u)} 
 + g'(\zeta)\cdot \nabla \chi_{A(u)} 
 -\Delta \chi_{A(u)}  = \partial_\zeta m_{A(u)},
\end{equation}
where
\begin{align*}
  &m_{A(u)} =m_{A(u)} (\zeta) 
  =\delta(\zeta-A(u))\abs{\nabla A(u)}^2,
  \\ & 
  \chi_{A(u)}=\chi_{A(u)}(\zeta) 
  = \begin{cases}
    1 & \mbox{ if $0 < \zeta \le A(u)$,} \\
    -1 & \mbox{ if $A(u) \le \zeta < 0$,} \\
    0 & \mbox{ otherwise.}
  \end{cases}
\end{align*}
This new formulation, although restricted to 
nondegenerate (isotropic) diffusion, allows 
for a simpler proof of the $L^1$ contraction property and 
thus the error estimate \eqref{eq:rate1}. More specifically, certain error 
terms linked to the regularization of the $\chi$ function \cite{ChenPerthame2003} 
can be avoided, a fact that we use to our benefit.

Now we indicate how \eqref{eq:KineticFormulation} leads 
to the $L^1$ contraction property. 
Let $u$ and $v$ be solutions to \eqref{eq:CD} with initial 
values $u_0$ and $v_0$, respectively. 
Following \cite{ChenPerthame2003,Perthame1998}, we 
introduce the microscopic contraction functional
\begin{equation}\label{eq:QDef}
	Q_{u,v}(\xi) = \abs{\chi_u(\xi)} + \abs{\chi_v(\xi)} - 2\chi_u(\xi)\chi_v(\xi).
\end{equation}
Under the change of variable $\zeta = A(\xi)$,
\begin{equation*}
	\abs{u-v} = \int_\R Q_{u,v}(\xi)\,d\xi 
	= \int_\R B'(\zeta) Q_{A(u),A(v)}(\zeta)\,d\zeta,
\end{equation*}
and hence
\begin{equation}\label{eq:TimeDerL1}
	\begin{split}
		\partial_t\abs{u-v} 
		& = \int_\R B'(\zeta)  \partial_t Q_{A(u),A(v)}(\zeta) \,d\zeta \\
		& = \int_\R B'(\zeta) \partial_t \abs{\chi_{A(u)}(\zeta)}\,d\zeta
		+   \int_\R B'(\zeta) \partial_t \abs{\chi_{A(v)}(\zeta)}\,d\zeta \\
		& \qquad -2 \int_\R B'(\zeta) 
		\partial_t\Bigl(\chi_{A(u)}(\zeta)\chi_{A(v)}(\zeta)\Bigr)\,d\zeta.
  \end{split}
\end{equation}

In view of \eqref{eq:KineticFormulation}, the chain rule yields 
\begin{equation}\label{eq:chiAbsTimeDer}
  B'(\zeta) \partial_t \abs{\chi_{A(u)}} 
  + g'(\zeta) \cdot \nabla \abs{\chi_{(A(u)}}
  - \Delta \abs{\chi_{A(u)}} 
  = \sign{\zeta}\partial_\zeta m_{A(u)},
\end{equation}
with an analogous equation for $v$. 
Using the equations for $\chi_{A(u)}, \chi_{A(v)}$ and 
Leibniz's product rule, we easily check that 
\begin{equation}\label{eq:chiProdTimeDerNew}
  \begin{aligned}
    & B'(\zeta) \partial_t \Bigl (\chi_{A(u)}  \chi_{A(v)}\Bigr) +
    g'(\zeta) \cdot \nabla \Bigl (\chi_{A(u)}  \chi_{A(v)}\Bigr) 
    - \Delta \Bigl (\chi_{A(u)}  \chi_{A(v)}\Bigr) \\ 
    &\qquad\qquad = \chi_{A(v)}\partial_\zeta m_{A(u)} 
    + \chi_{A(u)}\partial_\zeta m_{A(v)} 
    - 2 \nabla \chi_{A(u)} \cdot \nabla \chi_{A(v)}.
  \end{aligned}
\end{equation}
Making use of \eqref{eq:chiAbsTimeDer} and \eqref{eq:chiProdTimeDerNew} 
in \eqref{eq:TimeDerL1} yields 
\begin{equation}\label{eq:FormalContractioneEquation}
\begin{split}
    \partial_t\abs{u-v}   
    = - \int_\R & g'(\zeta) \cdot \nabla Q_{A(u),A(v)}(\zeta)\,d\zeta
    +\int_\R \Delta Q_{A(u),A(v)}(\zeta)\,d\zeta 
    \\ & + \int_\R D(\zeta)\,d\zeta,
\end{split}
\end{equation}
where
\begin{align*}
  D(\zeta) & =
  \Bigl(\sign{\zeta}-2\chi_{A(v)}(\zeta)\Bigr)\partial_\zeta m_{A(u)}
  + \Bigl(\sign{\zeta}-2\chi_{A(u)}(\zeta)\Bigr) \partial_\zeta
  m_{A(v)} \\ & \qquad \qquad + 4\nabla \chi_{A(u)}(\zeta) \cdot
  \nabla \chi_{A(v)}(\zeta)
  \\
  &=: D_1(\zeta)+D_2(\zeta)+D_3(\zeta);
\end{align*}
the term $D(\cdot)$ accounts for the parabolic 
dissipation effects associated with $u,v$. 
Integrating \eqref{eq:FormalContractioneEquation} 
in $x$ gives
$$
\frac{d}{dt} \int \abs{u(t,x)-v(t,x)}\, dx 
= \int \int_\R D(\zeta)\,d\zeta\, dx.
$$
Although the computations have been formal up to this point, they 
are valid when interpreted in the sense of distributions. Moreover, 
as will be seen later, these computations can with some effort be replicated at 
the discrete level, i.e., when we replace the function $v$ by the 
numerical solution $u_\Dx$.

Clearly, the $L^1$-contraction property follows if we can confirm that
\begin{equation}\label{eq:SignOfDissipTerm}
	\int_\R D(\zeta) \,d\zeta \le 0.
\end{equation}
This step is rather delicate and will ask for a 
regularization of the $\chi$ functions. Indeed, the hard part 
of the proof leading up to \eqref{eq:rate1}, \eqref{eq:rate2} 
is related to this step. Let us for the moment ignore the regularization 
procedure, and continue with formal computations. 
Note that
\begin{equation*}
	\sign{\zeta}-2\chi_{A(v)}(\zeta) = \sign{\zeta-A(v)},
\end{equation*}
and thus, after an integration by parts 
followed by an application of the chain rule,
\begin{align*}
  &\int_\R D_1(\zeta)  \,d\zeta 
  = -2\int_\R \delta(\zeta-A(u)) 
  \delta(\zeta-A(v))\abs{\nabla A(u)}^2\,d\zeta.  
\end{align*}
Similarly,
\begin{align*}
  &\int_\R D_2(\zeta)  \,d\zeta 
  = -2\int_\R \delta(\zeta-A(u)) 
  \delta(\zeta-A(v))\abs{\nabla A(v)}^2\,d\zeta.  
\end{align*}
Again by the chain rule,
\begin{equation*}
  D_3(\zeta)= 4\delta(\zeta-A(u))\delta(\zeta-A(v))
  \nabla A(u) \cdot \nabla A(v). 
\end{equation*}
Combining these formal computations we finally 
arrive at \eqref{eq:SignOfDissipTerm}:
\begin{equation*}
  \int_\R D(\zeta)\,d\zeta 
  = -2\int_\R \delta(\zeta-A(u))\delta(\zeta-A(v))
  \abs{\nabla A(u)-\nabla  A(v)}^2\,d\zeta \leq 0.
\end{equation*}

One crucial insight in \cite{KRS2014}  
is that the convergence rate can be improved if one can send
a certain parameter $\eps$ to zero independently of the grid size
$\Dx$, where $\eps$ controls  the regularization of 
the Kru\v{z}kov entropies. In this paper the 
regularization of the entropies is replaced by the regularization 
of the $\chi$ functions, and as before we would like to 
send $\eps$ to zero independently of $\Dx$ (and other parameters).  
It turns out that in one spatial dimension we can do this, reaching the convergence 
rate $\gamma=1/3$ as in \cite{KRS2014}. In several dimensions we 
have not been able to carry out this ``$\eps\to 0$ before 
other parameters" program.  

A serious difficulty stems from the lack of a chain rule for 
finite differences, in combination with the highly nonlinear nature 
of the dissipation function $D(\cdot)$, resulting in a 
series of intricate error terms. A feature of the kinetic approach is that 
the crucial error term can be expressed via the 
parabolic dissipation measure.  To be a bit more 
precise, at the continuous level, the 
convergence rate $\gamma=1/3$ in the one-dimensional 
case depends decisively on the (weak) continuity of the map
\begin{equation}\label{eq:pardiss}
	c \mapsto \int_\R \delta(\zeta-c)m_{A(u)}(\zeta)\,d\zeta 
	= \delta(c-A(u))(\partial_xA(u))^2,
\end{equation}
where $u$ is the entropy solution and $m_{A(u)}$ is the parabolic 
dissipation measure.  The continuity of this map 
follows from \eqref{eq:KineticFormulation}. Unfortunately, in 
several space dimensions the continuity becomes 
a subtle matter, since the parabolic dissipation 
measure splits into directional components,
$$
m_{A(u)} = \sum_{i = 1}^d m_{A(u)}^i, \qquad
m_{A(u)}^i = \delta(\zeta-A(u))(\partial_{x_i}A(u))^2.
$$ 
It appears difficult to claim from 
the kinetic equation \eqref{eq:KineticFormulation}
the continuity of the individual components
$$
c \mapsto \int_\R \delta(\zeta-c)m_{A(u)}^i(\zeta)\,d\zeta 
= \delta(c-A(u))(\partial_{x_i}A(u))^2, \quad i=1,\ldots, d.
$$
Not being able to send the $\chi$-regularisation 
parameter $\eps$ to zero, we must instead 
balance $\eps$ against the grid size $\Dx$ and a number 
of other parameters, at long last arriving at \eqref{eq:rate1} with the 
convergence rate \eqref{eq:rate2}. 

The optimality of \eqref{eq:rate2} ($d>1$), in 
the $L^\infty\cap BV$  class, is an open problem. 
It is informative to compare with recent results on viscosity solutions 
and error bounds for degenerate fully nonlinear elliptic and parabolic equations. We 
refer to Krylov \cite{Krylov:2005lj}, Barles and Jakobsen \cite{Barles:2005sc}, and 
Caffarelli and Souganidis \cite{Caffarelli:2008aa} for some recent works. 
For monotone approximations of fully nonlinear, first-order equations 
with Lipschitz solutions, Crandall and Lions \cite{CranLions:FDM84} proved in 1984 the 
optimal $L^\infty$ convergence rate $1/2$. However, finding a 
rate for degenerate second order equations remained an open problem.
The first result is due to Krylov with the rate $1/27$. 
Later Barles and Jakobsen improved this to to $1/5$, with 
a further improvent by Krylov to $1/2$ for equations with special structure.
We remark that these results concern equations with convex nonlinearities.
Caffarelli and Souganidis proved that there is an algebraic rate of convergence for 
a class of nonconvex equations. The convergence rate is not explicit 
but known to be some (small) positive number. 
Here we should point out that in our framework convexity plays no role; the 
error estimate applies to general nonlinearities.

The remaining part of this paper is organized as follows:  
In Section \ref{sec:viscappentropy} we gather 
some relevant a priori estimates for 
nondegenerate convection-diffusion equations 
and state precisely the definition of an entropy solution.  The 
difference method and the main result are presented in 
Section \ref{sec:differencescheme}. In Section~\ref{sec:kinetic} we supply 
certain kinetic formulations of the convection-diffusion 
and difference equations. Section~\ref{sec:errest} is 
devoted to the proof of the main result, achieved through the derivation 
of an error equation based on the kinetic formulations, along 
with a lengthy series of estimates bounding ``unwanted" terms in this equation. 
In Appendix~\ref{app:semiexist} we collect results 
relating to well-posedness and a priori estimates for the difference method.

\section{Viscosity approximations and entropy solutions}\label{sec:viscappentropy}
Let us define the viscosity approximations. 
Set $A^\eta(u) := A(u) + \eta u$ for any fixed $\eta > 0$, and
consider the the uniformly parabolic problem
\begin{equation}\label{eq:viscousApproximation}
  \begin{cases}
    u^\eta_t + \nabla \cdot f(u^\eta) = \Delta A^\eta(u^\eta), & (t,x) \in \Pi_T,\\
    u^\eta(0,x) = u_0(x), & x \in \R^d.
  \end{cases}
\end{equation}
It is well known that \eqref{eq:viscousApproximation} admits a unique
classical (smooth) solution.  We collect some relevant (standard)
estimates from \cite{VolpertHudjaev1969}.

\begin{lemma}\label{lem:1.1}
  Suppose $u_0\in L^\infty(\R^d)\cap L^1(\R^d) \cap BV(\R^d)$, and let
  $u^\eta$ be the unique classical solution of
  \eqref{eq:viscousApproximation}. Then for any $t>0$,
  \begin{align*}
    \norm{u^\eta(t,\cdot)}_{L^1(\R^d)} & \le \norm{u_0}_{L^1(\R^d)},\\
    \norm{u^\eta(t,\cdot)}_{L^\infty(\R^d)} &\le \|u_0\|_{L^\infty(\R^d)}, \\
    \abs{u^\eta(t,\cdot)}_{BV(\R^d)} &\le \abs{u_0}_{BV(\R^d)}.
  \end{align*}
\end{lemma}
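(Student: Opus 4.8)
These are the classical a priori bounds for uniformly parabolic equations, and I would derive all three by Kru{\v{z}}kov-type regularization arguments, using throughout that $\ue$ is a smooth solution of \eqref{eq:viscousApproximation} which, together with its spatial derivatives, decays at infinity — this is what legitimizes the integrations by parts below; we refer to \cite{VolpertHudjaev1969} for details. For the $L^1$ bound I would fix a convex $S_\eps\in C^2(\R)$ with $S_\eps''\ge0$ supported in $(-\eps,\eps)$, $0\le S_\eps(\sigma)\le\abs{\sigma}+\eps$, and $S_\eps'(\sigma)\to\sign{\sigma}$ as $\eps\downarrow0$, and define $q_{S_\eps}:\R\to\R^d$ by $q_{S_\eps}'=S_\eps'f'$. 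Multiplying \eqref{eq:viscousApproximation} by $S_\eps'(\ue)$, integrating over $\R^d$, and integrating by parts in the diffusion term, one gets
$$
\frac{d}{dt}\int_{\R^d}S_\eps(\ue)\,dx
=-\int_{\R^d}S_\eps''(\ue)\,(A^\eta)'(\ue)\,\abs{\nabla\ue}^2\,dx\le0,
$$
since the flux term $\int_{\R^d}\nabla\cdot q_{S_\eps}(\ue)\,dx$ vanishes and $(A^\eta)'\ge\eta>0$; integrating in $t$ and letting $\eps\downarrow0$ (dominated convergence) yields $\norm{\ue(t,\cdot)}_{L^1}\le\norm{u_0}_{L^1}$.

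For the $L^\infty$ bound I would use that the constant $M:=\norm{u_0}_{L^\infty}$ solves \eqref{eq:viscousApproximation}; hence $w:=\ue-M$ solves an equation of the same type with $f,A^\eta$ replaced by $f(\cdot+M),A^\eta(\cdot+M)$ and with $w(0,\cdot)\le0$. Repeating the computation above with $S_\eps$ replaced by a convex $C^2$ approximation of $\sigma\mapsto\sigma_+$ gives $\tfrac{d}{dt}\int_{\R^d}(w)_+\,dx\le0$, so $w\le0$, i.e.\ $\ue\le M$; the lower bound $\ue\ge-M$ is obtained symmetrically.

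For the $BV$ bound the plan is to first promote the $L^1$ estimate to an $L^1$-contraction: applying the same regularization to the difference $w:=\ue-\ve$ of two classical solutions of \eqref{eq:viscousApproximation}, after writing $f(\ue)-f(\ve)=b(t,x)\,w$ and $A^\eta(\ue)-A^\eta(\ve)=a(t,x)\,w$ with $a\ge\eta$, yields $\norm{\ue(t,\cdot)-\ve(t,\cdot)}_{L^1}\le\norm{u_0-v_0}_{L^1}$. Since \eqref{eq:viscousApproximation} is translation invariant, $\ue(t,\cdot+he_k)$ is the classical solution with datum $u_0(\cdot+he_k)$, so the contraction estimate gives, for every $h\ne0$ and $1\le k\le d$,
$$
\frac1{\abs h}\norm{\ue(t,\cdot+he_k)-\ue(t,\cdot)}_{L^1}
\le\frac1{\abs h}\norm{u_0(\cdot+he_k)-u_0}_{L^1};
$$
taking the supremum over $h$ and summing over $k$, the characterization of $BV$ by difference quotients gives $\abs{\ue(t,\cdot)}_{BV}\le\abs{u_0}_{BV}$. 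The only genuinely delicate point in all of this — and the reason one wants $\ue$ smooth with decaying derivatives — is the justification of the integrations by parts (no boundary contributions at infinity) and of the limit $\eps\downarrow0$; both are routine, so I would merely sketch them and refer to \cite{VolpertHudjaev1969}.
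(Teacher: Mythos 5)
Your proposal is correct, and it cannot really diverge from ``the paper's proof'' because the paper gives none: Lemma~\ref{lem:1.1} is stated as a collection of standard estimates imported from \cite{VolpertHudjaev1969} without argument. What you sketch is precisely the classical route behind those estimates: Kru\v{z}kov-type convex regularization of $\abs{\cdot}$ for the $L^1$ bound, comparison with the constant solution $\pm\norm{u_0}_{L^\infty}$ via an approximation of the positive part for the $L^\infty$ bound, and the $L^1$-contraction combined with translation invariance and difference quotients for the $BV$ bound; all three are sound, and the technical points you defer (decay at infinity justifying the integrations by parts, the vanishing of the extra terms $\int S_\eps''(w)\,w\,(\nabla a\cdot\nabla w)\,dx$ and $\int S_\eps''(w)\,w\,(b\cdot\nabla w)\,dx$ as $\eps\downarrow 0$ in the contraction step) are exactly the ones handled in \cite{VolpertHudjaev1969}, to which the paper itself appeals. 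One cosmetic remark: in the $L^1$ step you should normalize $S_\eps'(0)=0$ so that $0\le S_\eps(\sigma)\le\abs{\sigma}$; with your stated bound $S_\eps(\sigma)\le\abs{\sigma}+\eps$ the additive $\eps$ is not integrable over $\R^d$, so the passage $\int S_\eps(u_0)\,dx\le\norm{u_0}_{L^1}$ and the limit $\eps\downarrow 0$ (Fatou on the left, domination by $\abs{u_0}$ on the right) are cleaner with the sharper normalization.
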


\begin{lemma}\label{LipschitzContTime}
  Suppose $u_0\in L^\infty(\R^d)\cap L^1(\R^d)$ and $\nabla \cdot
  (f(u_0)-\nabla A(u_0)) \in L^1(\R^d)$. Let $u^\eta$ be the unique
  classical solution of \eqref{eq:viscousApproximation}.  Then for any
  $t_1,t_2>0$,
  \begin{equation*}
    \norm{u^\eta(t_2,\cdot)-u^\eta(t_1,\cdot)}_{L^1(\R)} 
    \le \norm{\nabla \cdot (f(u_0)-\nabla A(u_0))}_{L^1(\R^d)}\abs{t_2-t_1}.
  \end{equation*}
\end{lemma}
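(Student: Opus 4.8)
The plan is to use the time-translation invariance of the autonomous problem \eqref{eq:viscousApproximation} together with the $L^1$-contraction property of its classical solution operator. Write $S_t$ for the map carrying initial data to the corresponding solution of \eqref{eq:viscousApproximation} at time $t$. By uniqueness and autonomy, $u^\eta(t_1+s,\cdot)=S_s\big[u^\eta(t_1,\cdot)\big]$ for $s\ge 0$, so for $0\le t_1<t_2$ and $h:=t_2-t_1$,
\begin{equation*}
	\norm{u^\eta(t_2,\cdot)-u^\eta(t_1,\cdot)}_{L^1(\R^d)}
	=\norm{S_{t_1}\big[u^\eta(h,\cdot)\big]-S_{t_1}\big[u_0\big]}_{L^1(\R^d)}
	\le\norm{u^\eta(h,\cdot)-u_0}_{L^1(\R^d)},
\end{equation*}
where the inequality is the standard $L^1$-contraction for the uniformly parabolic equation \eqref{eq:viscousApproximation} (cf.~\cite{VolpertHudjaev1969}). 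It therefore suffices to show $\norm{u^\eta(h,\cdot)-u_0}_{L^1(\R^d)}\le C\,h$ with $C=\norm{\nabla\cdot(f(u_0)-\nabla A(u_0))}_{L^1(\R^d)}$.

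Assume first that $u_0$ is smooth with all relevant derivatives integrable, so that $u^\eta$ is smooth and $v:=\partial_t u^\eta$ may legitimately be used below. Differentiating \eqref{eq:viscousApproximation} in time, $v$ solves the linear equation $v_t+\nabla\cdot\big(f'(u^\eta)\,v\big)=\Delta\big((A'(u^\eta)+\eta)\,v\big)$; expanding the right-hand side by the chain rule and setting $a:=A'(u^\eta)+\eta\ge\eta>0$ gives the advection--diffusion equation
\begin{equation*}
	v_t+\nabla\cdot\big(\mathbf{b}\,v\big)-\nabla\cdot\big(a\,\nabla v\big)=0,
	\qquad \mathbf{b}:=f'(u^\eta)-A''(u^\eta)\,\nabla u^\eta ,
\end{equation*}
with smooth, integrable coefficients and strictly positive diffusion. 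Multiplying by $\signe{v}$, integrating over $\R^d$, using that the diffusion term contributes $\int_{\R^d} a\,\mathrm{sign}_\varepsilon'(v)\,\abs{\nabla v}^2\,dx\ge 0$ while the advection term tends to $0$, and letting $\eps\to0$, one finds $\tfrac{d}{dt}\norm{v(t,\cdot)}_{L^1(\R^d)}\le 0$. Hence $t\mapsto\norm{\partial_t u^\eta(t,\cdot)}_{L^1(\R^d)}$ is nonincreasing, and since $u^\eta(h,\cdot)-u_0=\int_0^h\partial_s u^\eta(s,\cdot)\,ds$ while \eqref{eq:viscousApproximation} gives $\partial_t u^\eta|_{t=0}=-\nabla\cdot f(u_0)+\Delta A^\eta(u_0)$, we obtain
\begin{equation*}
	\norm{u^\eta(h,\cdot)-u_0}_{L^1(\R^d)}
	\le\int_0^h\norm{\partial_s u^\eta(s,\cdot)}_{L^1(\R^d)}\,ds
	\le h\,\norm{\nabla\cdot\big(f(u_0)-\nabla A^\eta(u_0)\big)}_{L^1(\R^d)} .
\end{equation*}

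To remove the extra smoothness on $u_0$ one regularizes, $u_0^\delta:=u_0*\rho_\delta$, applies the bound just obtained to the solution with data $u_0^\delta$, and passes to the limit $\delta\to0$: the left-hand side converges to $\norm{u^\eta(h,\cdot)-u_0}_{L^1}$ by $L^1$-contraction and $\norm{u_0^\delta-u_0}_{L^1}\to0$, while on the right-hand side a commutator/regularization argument is used to pass $\nabla\cdot(f(u_0^\delta)-\nabla A(u_0^\delta))\to\nabla\cdot(f(u_0)-\nabla A(u_0))$ in $L^1$. I expect the main obstacles to be precisely (i)~the $L^1$-estimate for the linearized equation in the smooth case — in particular the chain-rule rewriting that isolates the drift $\mathbf{b}$ and the genuine use of $a\ge\eta>0$ to control the diffusion term — and (ii)~this passage to the limit in the initial data, where the viscous contribution carried by $A^\eta$ rather than $A$ has to be accounted for; accordingly, the constant in the statement is to be read with the diffusion function $A^\eta(u)=A(u)+\eta u$ that actually appears in \eqref{eq:viscousApproximation}.
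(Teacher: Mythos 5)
The paper never proves this lemma: it is collected as a ``standard estimate'' and attributed to \cite{VolpertHudjaev1969}, so there is no internal argument to compare with. Your proof is the classical one --- reduce, via the semigroup property and $L^1$-contraction (or, more directly, via the monotonicity of $t\mapsto\norm{\partial_t u^\eta(t,\cdot)}_{L^1(\R^d)}$), to a bound on $\norm{\partial_t u^\eta(0,\cdot)}_{L^1(\R^d)}$, obtained by differentiating the equation in time and testing with $\signe{\partial_t u^\eta}$ --- and in outline it is sound; this is essentially how the estimate is established in the cited literature.

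Two caveats. First, your rewriting of $\Delta\bigl((A'(u^\eta)+\eta)v\bigr)$ into a drift term plus $\nabla\cdot(a\nabla v)$ uses $A''$, which exceeds the standing assumption that $A$ is only locally $C^1$; it is cleaner (and sufficient) to keep the term as $\Delta(av)$ and integrate by parts twice against $\signe{v}$, a Kato-type argument that needs no derivative of $A'$. Second, and more substantively, the constant your argument produces for fixed $\eta$ is $\norm{\nabla\cdot(f(u_0)-\nabla A^\eta(u_0))}_{L^1(\R^d)}$, as your closing remark concedes; but the lemma as printed has $A$, not $A^\eta$, and assumes only $\nabla\cdot(f(u_0)-\nabla A(u_0))\in L^1(\R^d)$. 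Your mollification step does not close this gap: with $\eta$ fixed, the extra contribution $\eta\norm{\Delta u_0^\delta}_{L^1(\R^d)}$ need not vanish as $\delta\to 0$ (for $BV$ data it is of order $\eta/\delta$), so the limit in $\delta$ does not deliver the stated constant. One either needs additional regularity of $u_0$ (e.g. $\Delta u_0\in L^1(\R^d)$), or must accept an extra $O(\eta)$ term in the Lipschitz constant --- harmless for the way the lemma is used in the paper, since it only feeds a compactness argument with $\eta\to 0$, but strictly a different statement from the one you were asked to prove.
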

These results imply that the family $\seq{u^\eta}_{\eta > 0}$ is
relatively compact in $C([0,T];L^1_{loc}(\R^d))$.  If $u=\lim_{\eta\to
  0} u^\eta$, then
\begin{equation}\label{eq:uminusueta}
  \norm{u^\eta-u}_{L^1(\Pi_T)} \le C\eta^{1/2},
\end{equation}
for some constant $C$ which does not depend on $\eta$, see, e.g., \cite{EvjeKarlsen2002}. 
Moreover, $u$ is an entropy solution according
to the following definition:
\begin{definition}
  An \emph{entropy solution} of
  \eqref{eq:CD} is a measurable
  function $u = u(t,x)$ satisfying:
  \begin{itemize}
  \item[(D.1)] $u \in L^\infty([0,T];L^1(\R^d))\cap L^\infty(\Pi_T) \cap
    C((0,T);L^1(\mathbb{R}^d))$.
  \item[(D.2)] $A(u) \in L^2([0,T];H^1(\R^d))$.
  \item[(D.3)] For all constants $c \in \mathbb{R}$ and all
   test functions $0\le \phi \in C_0^\infty(\mathbb{R}^d
    \times [0,T))$, the following entropy inequality holds:
    \begin{multline*}
      \iint_{\Pi_T}|u-c|\partial_t \phi +
      \sign{u-c}(f(u)-f(c))\cdot\nabla \varphi + |A(u)-A(c)|\Delta
      \varphi \,dtdx 
      \\
      + \int_{\mathbb{R}^d}|u_0-c|\varphi(x,0)\,dx \ge 0.
    \end{multline*}
  \end{itemize}
\end{definition}
The uniqueness of entropy solutions is proved in \cite{Carrillo:1999hq}, see the introduction 
for additional references.

\section{Difference method and main result}
\label{sec:differencescheme}
Let $f=(f^1,\ldots,f^d)$, and let $\Dx$ denote the mesh size. For
simplicity we consider a uniform grid in $\R^d$ consisting of cubes
with sides $\Dx$. For a multi-index
$\alpha=(\alpha_1,\ldots,\alpha_d)\in \Z^d$, we let $I_\alpha$ denote
the grid cell
\begin{equation*}
  I_\alpha = [x_{\alpha_1 -1/2},x_{\alpha_1 + 1/2}) \times 
  \cdots \times [x_{\alpha_d-1/2},x_{\alpha_d + 1/2}),
\end{equation*}
where $x_{j+1/2}=(j+1/2)\Dx$ for $j\in \Z$.  Let $e_k\in \Z^d$ be the vector with
value one in the $k$-th component and zero otherwise. Then we define
the forward and backward discrete partial derivatives in the $k$-ht direction as
\begin{equation*}
  D^k_\pm(\sigma_\alpha) = \pm \frac{\sigma_{\alpha \pm
      e_k}-\sigma_\alpha}{\Dx} \qquad k = 1, \dots ,d. 
\end{equation*}
\begin{definition}{(Numerical flux)}\label{def:numflux}
  We call a function $F \in C^1(\mathbb{R}^2)$ a \emph{monotone two
    point numerical flux
    for $f$}, if $F(u,u) = f(u)$ and
  \begin{equation*}
    \frac{\partial}{\partial u} F(u,v) \ge 0 \quad \text{and} \quad
    \frac{\partial}{\partial v} F(u,v) \le 0 
  \end{equation*}
  holds for all $u$ and $v$. We say that the numerical flux \emph{splits}
  whenever $F$ can be written
  \begin{equation*}
    F(u,v)=F_1(u)+F_2(v).
  \end{equation*}
  Note that $F'_1\ge 0$ and $F_2'\le 0$ whenever $F$ is monotone.
\end{definition}
Let $F^k$ be a numerical flux function corresponding to $f^k$ for $k=1,\ldots,d$. 
The semi-discrete approximation of \eqref{eq:CD} is the 
solution of the equations
\begin{equation}\label{eq:Semi-discreteScheme}
  \begin{cases}
    \frac{d}{dt} u_\alpha + \sum_{i = 1}^d \Dm^i
    F^i(u_\alpha,u_{\alpha+e_i}) = \sum_{i = 1}^d \Dm^i\Dp^iA(u_\alpha), & 
      \!\!\! \alpha \in \Z^d,\; t \in (0,T), \\
    u_\alpha(0) = u_{\alpha,0}, & \!\!\! \alpha \in \mathbb{Z}^d,
  \end{cases}
\end{equation}
where $u_{\alpha,0} = \frac{1}{\Dx^d}\int_{I_\alpha} u_0(x)\, dx$. 
See Appendix~\ref{app:semiexist}, in particular Lemmas~\ref{lem:SemiDiscProp} 
and \ref{lem:semidisc_cont}, regarding
existence and solution properties to 
this infinite system of ODEs.  

Define the piecewise constant (in $x$) function $u_\Dx$ by
\begin{equation}\label{eq:PiecewiseConstantFuncDef}
  u_\Dx(t,x) = u_\alpha(t) \text{ for } x \in I_\alpha.
\end{equation}

Our main result is the  following:

\begin{theorem}\label{theorem:MultidLocal}
Suppose $f$ and $A$ satisfy \eqref{assump:fluxAndDiff} and the 
initial function $u_0$ is in $BV(\R^d)\cap L^\infty(\R^d)\cap L^1(\R^d)$. 
Let $F^i$ be a monotone, Lipschitz, two point numerical flux 
corresponding to $f^i$ that splits for $1 \leq i \leq d$. 
Let $u$ be the entropy solution to \eqref{eq:CD} and $u_\Dx$ be 
defined by \eqref{eq:PiecewiseConstantFuncDef}, where $u_\alpha$ is 
the solution to \eqref{eq:Semi-discreteScheme}.

Then, for any positive $R$ and $T$, there exists a 
constant $C$ depending only on $f$, $A$, $u_0$, $R$ and $T$, such that
\begin{equation*}
    \norm{u_\Dx(t)-u(t)}_{L^1(B(0,R))} \le C \Dx^{\tfrac{2}{19+d}},
    \qquad t \in [0,T].
\end{equation*}
\end{theorem}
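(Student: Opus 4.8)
The plan is to reduce everything to the nondegenerate (viscosity-regularized) setting and then run a doubling-of-variables argument at the \emph{kinetic} level between the exact viscosity solution and the numerical scheme applied to the same viscous equation. First I would fix $\eta>0$ and replace $A$ by $A^\eta(u)=A(u)+\eta u$, so that $A^\eta$ is strictly increasing; let $u^\eta$ be the classical solution of \eqref{eq:viscousApproximation} and $u^\eta_\Dx$ the piecewise-constant interpolant of the solution of \eqref{eq:Semi-discreteScheme} with $A$ replaced by $A^\eta$. By the triangle inequality and \eqref{eq:uminusueta}, it suffices to prove the bound $\norm{u^\eta_\Dx(t)-u^\eta(t)}_{L^1(B(0,R))}\le C(\Dx^{\gamma}+\eta^{1/2})$ with $C$ independent of $\eta$, and then optimize in $\eta$ at the very end (choosing $\eta\sim\Dx^{2\gamma}$ will contribute nothing worse than the stated rate). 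Using $B=(A^\eta)^{-1}$ and $g$ defined by $g\circ A^\eta=f$, I would write both $u^\eta$ and $u^\eta_\Dx$ through their kinetic formulations in the $\chi_{A^\eta(\cdot)}$ variable — the continuous one being \eqref{eq:KineticFormulation}, and a discrete analogue (to be derived in Section~\ref{sec:kinetic}) carrying its own discretization-defect terms coming from the lack of a chain rule for finite differences and from the upwind/centred discretization.

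Next I would form the microscopic contraction functional $Q_{A^\eta(u^\eta),A^\eta(u^\eta_\Dx)}(\zeta)$ as in \eqref{eq:QDef}, introduce a regularization of the $\chi$ functions at scale $\eps$ (the crucial parameter), together with space/time mollifications at scales $r$ (in $x$) and $r_0$ (in $t$), and test the two kinetic equations against each other following \eqref{eq:chiAbsTimeDer}–\eqref{eq:FormalContractioneEquation}. Integrating in $x,t,\zeta$ produces an error equation of the schematic form
\begin{equation*}
  \norm{u^\eta_\Dx(t)-u^\eta(t)}_{L^1(B(0,R))}
  \le C\bigl(r + r_0 + \eps + \tfrac{\Dx}{\eps} + \tfrac{\Dx}{r} + \tfrac{\Dx}{r_0}
  + \tfrac{\Dx}{r^2} + \cdots\bigr) + (\text{dissipation残差}),
\end{equation*}
where the various ratios $\Dx/\eps$, $\Dx/r$, etc., collect the commutator/defect terms from the discrete kinetic equation, the BV bounds of Lemma~\ref{lem:1.1}, and the time-continuity estimate of Lemma~\ref{LipschitzContTime}; the precise list of exponents is what ultimately pins down $\gamma=2/(19+d)$. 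Crucially, the term $D(\zeta)=D_1+D_2+D_3$ accounting for parabolic dissipation must be shown to be (almost) nonpositive: at the continuous level it is exactly $-2\int\delta(\zeta-A^\eta(u^\eta))\delta(\zeta-A^\eta(v))|\nabla A^\eta(u^\eta)-\nabla A^\eta(v)|^2\,d\zeta\le 0$, but with $v$ replaced by the numerical solution and with $\eps$-regularized $\chi$'s one only gets this up to error terms controlled by the parabolic dissipation measure $m_{A^\eta(u^\eta)}$ and its discrete counterpart.

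The main obstacle — and the bulk of Section~\ref{sec:errest} — will be controlling the dissipation residual in several space dimensions. As the introduction emphasizes, the parabolic dissipation measure splits into directional pieces $m^i_{A^\eta(u^\eta)}=\delta(\zeta-A^\eta(u^\eta))(\partial_{x_i}A^\eta(u^\eta))^2$, and one cannot extract from the kinetic equation \eqref{eq:KineticFormulation} the weak continuity in $\zeta$ of each individual component $c\mapsto\int_\R\delta(\zeta-c)m^i_{A^\eta(u^\eta)}(\zeta)\,d\zeta$; only the full sum is controlled. Consequently one cannot send $\eps\to 0$ before $\Dx$, and instead must keep $\eps$ coupled to $\Dx$, using the global bound $\iint m^i_{A^\eta(u^\eta)}\le C$ (from $A^\eta(u^\eta)\in L^2(H^1)$, uniformly in $\eta$) together with the $L^2$-bound on discrete gradients of $A^\eta(u^\eta_\Dx)$ to absorb the cross terms $4\nabla\chi\cdot\nabla\chi$ and the measure terms up to a price of order $\eps$ times a dissipation bound. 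After assembling all error contributions, the final step is the optimization: balancing $r,r_0,\eps$ against $\Dx$ (with $r\sim r_0$ and $\eps\sim r$, and the worst term being of order $\Dx/r^{?}$ with the exponent growing with $d$) yields $r\sim\Dx^{2/(19+d)}$ and hence the claimed rate; choosing $\eta\sim\Dx^{4/(19+d)}$ then makes the $\eta^{1/2}$ contribution harmless.
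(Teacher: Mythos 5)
Your plan follows the paper's strategy step for step (viscous regularization, kinetic formulation in the $A$-variable, regularized microscopic contraction functional, mollification at scales $\eps,r,r_0$, the observation that the directional splitting of the parabolic dissipation measure prevents sending $\eps\to0$ first in several dimensions, then parameter balancing). However, as written the quantitative part — which is the whole content of the theorem, since the claim is a specific rate — contains a step that fails. Your final balancing ``$r\sim r_0$ and $\eps\sim r$'' is inconsistent with the error terms your own scheme produces: the commutator terms created by regularizing the $\chi$-functions (the analogues of \eqref{eq:T2}--\eqref{eq:T5}, bounded in Estimate~\ref{est:7}, and the replacement errors of Lemmas~\ref{lem:absdiffs} and \ref{lem:rr0terms}) are of size $\eps/\eta$, $\eps/(\eta r)$ and $\eps/(\eta r_0)$. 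With $\eta\sim r^2$ and $\eps\sim r$ these are of order $r^{-1}$ and $r^{-2}$, so the error bound does not even tend to zero. One must take $\eps$ much smaller than $\eta\min(r,r_0)$, and the obstruction to taking it arbitrarily small is precisely the multi-dimensional dissipation remainder, which is bounded (Estimate~\ref{est:2}) by
\begin{equation*}
  C\,\frac{\Dx}{\eps^{2}\sqrt{r_0\,r^{d}}}\,,
\end{equation*}
a term that never appears in your schematic list (you wrote $\Dx/\eps$). It is exactly this term that carries the dimension dependence: with $r=r_0=\sqrt{\eta}$ and $\eps=r^{4}$ the competition between $r$ and $\Dx\,r^{-(17+d)/2}$ yields $r\sim\Dx^{2/(19+d)}$ and hence the stated exponent. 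So the exponent bookkeeping cannot be deferred; with your stated choice the argument proves nothing.

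A second, smaller gap: your reduction compares $u^\eta_\Dx$ (the scheme run with $A^\eta$) to $u^\eta$ and then to $u$ via \eqref{eq:uminusueta}, but the theorem concerns $u_\Dx$, the scheme run with $A$ itself. The triangle inequality you invoke silently uses a bound on $\norm{u_\Dx-u^\eta_\Dx}_{L^1}$, i.e.\ a discrete continuous-dependence estimate in the diffusion function (of order $\sqrt{\eta}$), or an argument letting $\eta\to0$ at fixed $\Dx$ using that the final estimate is uniform in $\eta$; this step needs to be stated, not assumed. Finally, note that your error constants require uniform (in $\eta$ and $\Dx$) bounds such as $\norm{\Dp A(u_\Dx)}_{L^2(\Pi_T;\R^d)}$ and the discrete $BV$ and time-Lipschitz bounds; these discrete a priori energy estimates are part of the work and should be supplied, not absorbed into ``BV bounds'' of the continuous solution.
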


\section{Kinetic formulations}\label{sec:kinetic}
In this section we supply certain kinetic formulations 
of the continuous and discrete equations \eqref{eq:CD} 
and \eqref{eq:Semi-discreteScheme}. 
As a preparation for the error estimate, we also regularize the kinetic 
equations by mollification. As explained in the 
introduction, due to the application of the viscous approximations in the 
proof of the error estimate, we 
assume $A' > 0$ for these intermediate results.

\subsection{Kinetic formulation of convection-diffusion equation}
\begin{lemma}\label{lemma:CarriloEntropyLemma}
  Assume that $A' > 0$ and set $B := A^{-1}$. Let $u$ be the solution
  of \eqref{eq:CD}. Define $g$ by $g(A(z)) = f(z)$ for all $z \in
  \R$. Let $S \in C^2(\R)$, 
  \begin{align*}
    \psi(u) &= \int_0^u S'(z)B'(z)\,dz,\qquad \psi_A(u) = \psi(A(u)),\\
    q(u) &= \int_0^u S'(z)g'(z)\,dz, \qquad q_A(u) = q(A(u)),
  \end{align*}
  and $S_A(u) = S(A(u))$. Then
  \begin{equation*}
    \partial_t\psi_A(u) + \nabla \cdot q_A(u) -\Delta S_A(u) =
    -S_A''(u)\abs{\nabla  A(u)}^2.
  \end{equation*}
\end{lemma}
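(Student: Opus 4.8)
The plan is to verify the identity by a direct computation, starting from the known equation for $u^\eta$ (or, after passing to the limit, from the PDE \eqref{eq:CD} interpreted in the usual weak/entropy sense) and applying the chain rule to the composite functions. Concretely, since $A' > 0$ we may work with $w := A(u)$, which (for the viscous approximation, hence smoothly) solves $B'(w)\partial_t w + g'(w)\cdot\nabla w = \Delta w$; here $B = A^{-1}$ and $g$ is defined by $g(A(z)) = f(z)$, so that $g'(A(z))A'(z) = f'(z)$ and $B'(A(z)) = 1/A'(z)$, which is exactly the change of variables that turns $\partial_t u + \nabla\cdot f(u) = \Delta A(u)$ into the displayed equation for $w$. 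This is the degenerate-parabolic analogue of writing a conservation law in terms of the entropy variable, and it is the reason the lemma is phrased with $A' > 0$.

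Next I would compute each term of $\partial_t\psi_A(u) + \nabla\cdot q_A(u) - \Delta S_A(u)$ in terms of $w$ and its derivatives. Writing $\psi_A(u) = \psi(w)$ with $\psi'(w) = S'(w)B'(w)$, we get $\partial_t\psi_A(u) = S'(w)B'(w)\,\partial_t w$. Similarly $\nabla\cdot q_A(u) = q'(w)\nabla w = S'(w)g'(w)\cdot\nabla w$. For the second-order term, $S_A(u) = S(w)$, so $\nabla S_A(u) = S'(w)\nabla w$ and $\Delta S_A(u) = S'(w)\Delta w + S''(w)|\nabla w|^2$. Adding the first two and subtracting the third, the factor $S'(w)$ multiplies precisely $B'(w)\partial_t w + g'(w)\cdot\nabla w - \Delta w$, which vanishes by the equation for $w$; what remains is $-S''(w)|\nabla w|^2 = -S_A''(u)|\nabla A(u)|^2$, since $S_A''(u)$ is understood as $(S\circ A)''$ evaluated so that it equals $S''(A(u))$ against $|\nabla A(u)|^2$ (the first-derivative cross term $S'(w)\Delta w$ having already been absorbed). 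That is the claimed identity.

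The only genuine subtlety — and the step I expect to require the most care — is the regularity/rigor issue: for the strongly degenerate equation \eqref{eq:CD} the solution $u$ is merely an entropy solution, $A(u)\in L^2([0,T];H^1)$, and the chain-rule manipulations above are not literally justified. The clean way around this, consistent with the paper's strategy (it applies the scheme to the viscous problem \eqref{intro:vv} and only sends $\eta\to 0$ at the very end), is to prove the lemma first for the smooth solution $u^\eta$ of \eqref{eq:viscousApproximation} with $A$ replaced by $A^\eta(u) = A(u)+\eta u$ — where everything is classical and the computation above is rigorous — and then to note that in the regime where the lemma is actually used ($A' > 0$), $u$ itself is smooth, so no limiting argument is even needed. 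I would therefore phrase the proof for a classical solution $u$ of \eqref{eq:CD} under $A' > 0$, carry out the chain-rule computation in the entropy variable $w = A(u)$ exactly as above, and remark that $B$, $g$, $\psi$, $q$ are well-defined and $C^1$ (resp. $C^2$) on the relevant range because $A' > 0$ and $S\in C^2$, $f,A\in C^1_{\mathrm{loc}}$.
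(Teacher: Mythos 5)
Your proof is correct and takes essentially the same route as the paper's: a direct chain-rule computation for a (classical, since $A'>0$) solution, with your passage to the variable $w=A(u)$ and multiplication by $S'(w)$ being just a cosmetic reorganization of the paper's step of multiplying the $u$-equation by $\psi_A'(u)$ and verifying $q_A'(u)=S'(A(u))f'(u)$, $\psi_A'(u)=S'(A(u))$, and $\Delta S_A(u)=S''(A(u))\abs{\nabla A(u)}^2+\psi_A'(u)\Delta A(u)$. In particular you correctly read the right-hand side $-S_A''(u)\abs{\nabla A(u)}^2$ as $-S''(A(u))\abs{\nabla A(u)}^2$, which is what the computation (both yours and the paper's) actually yields.
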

\begin{proof}
  Multiplying \eqref{eq:CD} by
  $\psi_A'(u)$ gives
  \begin{equation*}
    \partial_t\psi_A(u) + \psi_A'(u)\nabla \cdot f(u) = \psi_A'(u)\Delta A(u).
  \end{equation*}
  Using a change of variables $A(\sigma) = z$,
  \begin{align*}
    q_A'(u) &= \partial_u\Bigl(\int_0^{A(u)} S'(z)g'(z)\,dz\Bigr) \\
    &= \partial_u\Bigr(\int_0^u
      S'(A(\sigma))g'(A(\sigma))A'(\sigma)\,d\sigma\Bigl) =
    S'(A(u))f'(u).
  \end{align*}
  Hence
  \begin{equation*}
    \psi_A'(u)\nabla \cdot f(u) = \nabla \cdot q_A(u).
  \end{equation*}
  Similarly we obtain $\psi_A'(u) = S'(A(u))$. Finally, observe that
  \begin{equation*}
    \Delta S_A(u) = S''(A(u))\abs{\nabla A(u)}^2 + \psi_A'(u)\Delta A(u).
  \end{equation*}
\end{proof}

The above entropy equation can be rephrased in terms of the $\chi$
function. Recall that for any locally Lipschitz 
continuous $\Psi:\R \rightarrow \R$,
\begin{equation}\label{eq:ReprOfComposedFuncChiFunc}
  \Psi(u)-\Psi(0) = \int_\R \Psi'(\xi)\chi(u;\xi)\,d\xi, \qquad (u \in \R).
\end{equation}
 
The next lemma reveals the equation
satisfied by $\chi(A(u);\zeta)$, where $u$ solves 
\eqref{eq:CD}, i.e., the kinetic formulation of 
the convection-diffusion equation.
 
\begin{lemma}\label{lemma:KineticEquationATransformed}
  Assume that $A' > 0$ and set $B:= A^{-1}$. 
  Let $u$ be the solution of \eqref{eq:CD}. Define
  $\rho(t,x,\zeta) = \chi(A(u(t,x));\zeta)$. Then
  \begin{equation}\label{eq:KineticEquationATransformed}
    \begin{cases}
      B'(\zeta)\partial_t \rho + g'(\zeta) \cdot \nabla \rho - \Delta
      \rho = \partial_\zeta m & \text{ in $\mathcal{D}'((0,T) \times
        \R^d \times \R)$}, 
      \\
      \rho(0,x,\zeta) = \chi(A(u_0(x));\zeta), & (x,\zeta) \in \R^d
      \times \R,
    \end{cases}
  \end{equation}
  where
  \begin{equation*}
    m(t,x,\zeta) = \delta(\zeta-A(u))\abs{\nabla A(u)}^2,
  \end{equation*}
  and $g$ satisfies $g(A(z)) = f(z)$ for all $z \in \R$.
\end{lemma}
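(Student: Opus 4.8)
The plan is to derive the kinetic equation \eqref{eq:KineticEquationATransformed} directly from the family of entropy-type equations furnished by Lemma~\ref{lemma:CarriloEntropyLemma}, by using the representation \eqref{eq:ReprOfComposedFuncChiFunc} to pass from macroscopic entropies $S$ to the microscopic variable $\zeta$. Concretely, I would first fix $S\in C^2(\R)$ and rewrite the conclusion of Lemma~\ref{lemma:CarriloEntropyLemma} using \eqref{eq:ReprOfComposedFuncChiFunc}. Since $\psi_A'(u)=S'(A(u))$, $q_A'(u)=S'(A(u))f'(u)$ and $S_A(u)=S(A(u))$, and since $B'(\zeta)=1/A'(B(\zeta))$, $g'(\zeta)=f'(B(\zeta))B'(\zeta)$, one expresses each of $\psi_A(u)$, $q_A(u)$, $S_A(u)$ as $\int_\R(\cdot)\,\chi(A(u);\zeta)\,d\zeta$ plus a constant: namely $\psi_A(u)-\psi_A(0)=\int_\R S'(\zeta)B'(\zeta)\chi(A(u);\zeta)\,d\zeta$ (by \eqref{eq:ReprOfComposedFuncChiFunc} applied to $\Psi=\psi$ evaluated at $A(u)$), and similarly with weights $S'(\zeta)g'(\zeta)$ and $S'(\zeta)$ for $q_A$ and $S_A$. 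After these substitutions, Lemma~\ref{lemma:CarriloEntropyLemma} becomes, for every $S\in C^2(\R)$,
\begin{equation*}
  \int_\R S'(\zeta)\Bigl(B'(\zeta)\partial_t\rho+g'(\zeta)\cdot\nabla\rho-\Delta\rho\Bigr)d\zeta
  = -\int_\R S''(\zeta)\,\delta(\zeta-A(u))\abs{\nabla A(u)}^2\,d\zeta,
\end{equation*}
interpreted in $\mathcal{D}'((0,T)\times\R^d)$, where I have used the chain rule $S_A''(u)\abs{\nabla A(u)}^2=\int_\R S''(\zeta)\delta(\zeta-A(u))\abs{\nabla A(u)}^2\,d\zeta$ together with $-S_A''(u)\abs{\nabla A(u)}^2$ being the right-hand side.

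Next I would integrate the right-hand side by parts in $\zeta$: since $m(t,x,\zeta)=\delta(\zeta-A(u))\abs{\nabla A(u)}^2$, we have $-\int_\R S''(\zeta)m\,d\zeta=\int_\R S'(\zeta)\partial_\zeta m\,d\zeta$ in the distributional sense in $\zeta$ (the boundary terms vanish because $A(u)$ is bounded, so $m$ has compact support in $\zeta$ locally in $(t,x)$, while $S'$ is arbitrary $C^1$). Hence for all $S\in C^2(\R)$,
\begin{equation*}
  \int_\R S'(\zeta)\Bigl(B'(\zeta)\partial_t\rho+g'(\zeta)\cdot\nabla\rho-\Delta\rho-\partial_\zeta m\Bigr)d\zeta=0
\end{equation*}
in $\mathcal{D}'((0,T)\times\R^d)$. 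Since $\{S':S\in C^2(\R)\}$ is dense enough — more precisely, testing against $S'(\zeta)\phi(t,x)$ with $\phi\in C_0^\infty((0,T)\times\R^d)$ and letting $S'$ range over a set dense in $C_0^\infty(\R)$ (e.g. antiderivatives of bump functions) — one concludes that the bracketed quantity vanishes as an element of $\mathcal{D}'((0,T)\times\R^d\times\R)$, which is exactly the PDE in \eqref{eq:KineticEquationATransformed}. The initial condition $\rho(0,x,\zeta)=\chi(A(u_0(x));\zeta)$ follows from $u\in C((0,T);L^1(\R^d))$ together with $u(0,\cdot)=u_0$, after noting that the map $w\mapsto\chi(A(w);\cdot)$ is continuous from $L^1(\R^d)$ into $L^1(\R^d\times\R)$ (one needs the weighted time derivative $B'(\zeta)\partial_t\rho$ to be a well-behaved distribution so that the trace at $t=0$ makes sense; this is standard for kinetic equations with an integrable forcing).

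The main technical point — and the step I would be most careful about — is the justification of the two "formal" manipulations: the interchange of the $\zeta$-integration with the differential operators $\partial_t$, $\nabla$, $\Delta$ (legitimate because $\rho$ is compactly supported in $\zeta$, uniformly on compact $(t,x)$-sets, by the $L^\infty$ bound on $u$ from Lemma~\ref{lem:1.1}, and because $\rho\in L^\infty$), and the integration by parts in $\zeta$ against the measure $m$, which requires knowing that $m$ is a genuine (nonnegative, locally finite) measure — this is where (D.2), i.e. $A(u)\in L^2([0,T];H^1(\R^d))$, enters, guaranteeing $\abs{\nabla A(u)}^2\in L^1_{loc}$. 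A secondary subtlety is that Lemma~\ref{lemma:CarriloEntropyLemma} is stated for $S\in C^2$, so one must either approximate a given $C_0^\infty$ test profile in $\zeta$ by such $S$ and pass to the limit (using dominated convergence, again via the uniform compact $\zeta$-support), or simply observe that $C_0^2(\R)\subset C^2(\R)$ already suffices since $S'$ for $S\in C_0^2$ is dense in $C_0^1(\R)$ in the relevant topology. None of these steps is deep, but assembling them cleanly is the substance of the proof.
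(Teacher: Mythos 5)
Your proposal is correct and follows essentially the same route as the paper: the paper's (one-line) proof likewise takes the entropy identity of Lemma~\ref{lemma:CarriloEntropyLemma} for arbitrary $S\in C^2$, rewrites each term via the representation \eqref{eq:ReprOfComposedFuncChiFunc} as $\int_\R S'(\zeta)(\cdot)\chi(A(u);\zeta)\,d\zeta$, and reads off \eqref{eq:KineticEquationATransformed} by varying $S$. Your additional details (integration by parts in $\zeta$, density of tensor-product test functions, and the trace at $t=0$) are exactly the routine justifications the paper leaves implicit.
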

\begin{proof}
  By Lemma~\ref{lemma:CarriloEntropyLemma} 
  and \eqref{eq:ReprOfComposedFuncChiFunc},
  \begin{multline*}
    \partial_t \int_\R S'(\zeta)B'(\zeta)\chi(A(u);\zeta)\,d\zeta +
    \nabla \cdot \int_\R S'(\zeta)g'(\zeta)\chi(A(u);\zeta)\,d\zeta 
    \\
    - \Delta \int_\R S'(\zeta)\chi(A(u);\zeta)\,d\zeta = \int_\R
    S'(\zeta) \partial_\zeta m(t,x,\zeta)\,d\zeta.
  \end{multline*}
\end{proof}

\subsection{Kinetic formulation of discrete equations}
\label{subsec:kinetic}
Stability/uniqueness analysis for differential equations 
often revolve around the chain rule.
The chain rule breaks down for numerical methods, but 
for us the next lemma will act as a substitute.
\begin{lemma}\label{lemma:NumericalChainRule}
  Let $S \in C^2(\R)$ satisfy $S'(0) = 0$. For any $g \in C^1(\R)$ and
  any real numbers $a$, $b$ and $c$,
  \begin{multline*}
    S'(a)(g(b)-g(a)) = \int_0^bS'(z)g'(z)\,dz - \int_0^a
    S'(z)g'(z)\,dz \\ + \int_a^bS''(z)(g(z)-g(b))\,dz.
  \end{multline*}
\end{lemma}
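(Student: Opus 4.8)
The plan is to prove Lemma~\ref{lemma:NumericalChainRule} directly by a one-dimensional integration-by-parts argument, treating it as a discrete surrogate for the identity $S'(u)\,\partial_x g(u) = \partial_x q(u) - S''(u)(\cdots)$ used in Lemma~\ref{lemma:CarriloEntropyLemma}. First I would reduce everything to the function $h(z) := S''(z)(g(z) - g(b))$, where $b$ is regarded as a frozen parameter. The key observation is that $h$ is exactly the quantity whose antiderivative, upon integrating by parts, produces the product term $S'(a)g(b)$ on the left: indeed $\int_a^b S''(z)(g(z)-g(b))\,dz = \bigl[S'(z)(g(z)-g(b))\bigr]_a^b - \int_a^b S'(z)g'(z)\,dz = -S'(a)(g(a)-g(b)) - \int_a^b S'(z)g'(z)\,dz$, where the boundary term at $z=b$ vanishes because $g(b)-g(b)=0$.

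So the proof is essentially one line: rearrange the displayed integration by parts to obtain $S'(a)(g(b)-g(a)) = \int_a^b S'(z)g'(z)\,dz + \int_a^b S''(z)(g(z)-g(b))\,dz$, and then split the first integral using $\int_a^b = \int_0^b - \int_0^a$, which is where the hypothesis $S'(0)=0$ is actually irrelevant to the split itself but is the normalization that makes $\psi, q$ in Lemma~\ref{lemma:CarriloEntropyLemma} well-defined antiderivatives vanishing at $0$; I would keep the hypothesis in the statement since it is the natural setting but note in the proof that the identity as written holds verbatim. I would carry out the steps in this order: (i) state the integration-by-parts formula for $\int_a^b S'(z)g'(z)\,dz$ against $g(z)-g(b)$; (ii) evaluate the boundary terms, noting the cancellation at $z=b$; (iii) solve for $S'(a)(g(b)-g(a))$; (iv) rewrite $\int_a^b$ as a difference of integrals from $0$. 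One should remark that $S\in C^2$ and $g\in C^1$ make all integrands continuous, so Fubini/IBP are unproblematic, and the orientation of $[a,b]$ (whether $a<b$ or $a>b$) is automatically handled by the signed-integral conventions.

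There is no real obstacle here — this is a routine computation, and the only thing worth flagging is that the "chain rule substitute" interpretation is what matters downstream: taking $a = u_\alpha$, $b = u_{\alpha+e_i}$ recovers, after multiplication by $\Dx^{-1}$, a discrete analogue of $S'(u)\partial_{x_i} g(u)$ plus an error term $\int_{u_\alpha}^{u_{\alpha+e_i}} S''(z)(g(z)-g(u_{\alpha+e_i}))\,dz$ that is quadratically small in $|u_{\alpha+e_i}-u_\alpha|$; this is presumably how the lemma is used to control the numerical flux differences in the error equation. For the proof itself, I expect the entire argument to fit in four or five lines of display math with essentially no choices to make.
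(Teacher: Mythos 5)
Your proof is correct and is essentially the paper's argument: the paper integrates by parts the derivative of $S'(z)(g(z)-g(b))$ over $[0,\zeta]$ (using $S'(0)=0$ to drop the boundary term at $0$) and subtracts the resulting identities at $\zeta=a$ and $\zeta=b$, while you perform the same integration by parts directly on $[a,b]$ and then split $\int_a^b=\int_0^b-\int_0^a$, which is a trivial reorganization. Your side remark that the identity holds verbatim without $S'(0)=0$ is also accurate (in the paper's version the boundary term $S'(0)(g(0)-g(b))$ would in any case cancel upon subtraction), so there is no gap.
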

\begin{proof}
  For any $\zeta \in \R$, integration by parts yields
  \begin{equation*}
    S'(\zeta)(g(\zeta)-g(b)) = \int_0^\zeta S'(z)g'(z)\,dz +
    \int_0^\zeta S''(z)(g(z)-g(b))\,dz. 
  \end{equation*}
  Take the two equations obtained by setting $\zeta$ be equal to $a$
  and $b$ and subtract one from the other.
\end{proof}

To make the discrete and continuous calculus notations similar, 
we introduce the discrete gradient 
\begin{equation*}
  D_\pm\sigma = (D^1_\pm\sigma, \dots, D^d_\pm\sigma), \quad
   \mbox{ for any $\sigma:\Z^d \rightarrow \R$.}
\end{equation*}

The upcoming lemma contains the equation
satisfied by $\chi(u_\alpha;\zeta)$, where $u_\alpha$ 
is the solution of the scheme \eqref{eq:Semi-discreteScheme}.
\begin{lemma}\label{lemma:KineticFormOfSemidiscreteScheme}
  Suppose $ A' \ge 0$. Let $\seq{u_\alpha}_{\alpha \in \Z^d}$ be the
  solution to \eqref{eq:Semi-discreteScheme}. 
  Then $\rho_\alpha(t,\xi):= \chi(u_\alpha(t);\xi)$ satisfies
  \begin{align*}
      & \partial_t \rho + \left(F_1'(\xi) \cdot \Dm + F_2'(\xi) \cdot
        \Dp \right)\rho - A'(\xi)\Dm \cdot \Dp \rho = \partial_\xi(m_F
      + m_A), \\
      & \zeta_\alpha(0,\xi) = \chi(u_{\alpha,0};\xi),
  \end{align*}
  in $\mathcal{D}'(\R \times [0,T])$ for each $\alpha \in \Z^d$, where
  \begin{align*}
    m_F  = \sum_{i = 1}^d
    \Biggl((F_1^i(\xi) & -F_1^i(u_{\alpha-e_i}))\Dm^i\chi(u_\alpha;\xi)
    \\ & \quad  +
      (F_2^i(\xi)-F_2^i(u_{\alpha+e_i}))\Dp^i\chi(u_\alpha;\xi)\Biggr) 
  \end{align*}
  and
  \begin{align*}
    m_A = \sum_{i = 1}^d
    \Biggl(\frac{1}{\Dx}(A(u_{\alpha+e_i}) & -A(\xi))\Dp^i\chi(u_\alpha;\xi)
      \\ & \quad + \frac{1}{\Dx}(A(\xi)-A(u_{\alpha-e_i}))\Dm^i
      \chi(u_\alpha;\xi)\Biggr). 
  \end{align*}
\end{lemma}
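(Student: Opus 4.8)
The plan is to verify the asserted identity in its weak form in the velocity variable. Fix $\alpha \in \Z^d$. Since $t \mapsto u_\alpha(t)$ is $C^1$ on $(0,T)$ (Lemma~\ref{lem:SemiDiscProp}, Lemma~\ref{lem:semidisc_cont}) and, by the $L^\infty$-bound there, the functions $\xi \mapsto \chi(u_\beta(t);\xi)$ with $\beta \in \{\alpha,\alpha\pm e_i\}$ are uniformly compactly supported, every term in the claimed equation is compactly supported in $\xi$; hence it suffices to check, for each $S \in C^2(\R)$ with $S'(0)=0$, the scalar identity obtained by multiplying by $S'(\xi)$ and integrating over $\xi\in\R$. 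The initial condition is immediate from $u_\alpha(0)=u_{\alpha,0}$. Two elementary facts, both consequences of \eqref{eq:ReprOfComposedFuncChiFunc}, will be used repeatedly: for locally Lipschitz $h$ and real $a,b$ one has $\int_b^a h(\xi)\,d\xi = \int_\R h(\xi)(\chi(a;\xi)-\chi(b;\xi))\,d\xi$, so in particular $\chi(u_{\alpha+e_i};\xi)-\chi(u_\alpha;\xi)=\Dx\,\Dp^i\chi(u_\alpha;\xi)$ and $\chi(u_{\alpha-e_i};\xi)-\chi(u_\alpha;\xi)=-\Dx\,\Dm^i\chi(u_\alpha;\xi)$; and the difference operators $\Dm^i,\Dp^i$, being finite linear combinations in $\alpha$, commute with $\int_\R\cdot\,d\xi$.

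Carrying out the test and using these facts, the left-hand side of the claimed equation becomes
\[
  S'(u_\alpha)\frac{d}{dt}u_\alpha
  +\sum_{i=1}^d\left(\Dm^i\eta_1^i(u_\alpha)+\Dp^i\eta_2^i(u_\alpha)-\Dm^i\Dp^i\Phi(u_\alpha)\right),
\]
where $\eta_j^i(u)=\int_0^u S'(z)(F_j^i)'(z)\,dz$ and $\Phi(u)=\int_0^u S'(z)A'(z)\,dz$. Substituting the scheme \eqref{eq:Semi-discreteScheme} for $\tfrac{d}{dt}u_\alpha$ and using the splitting $F^i=F_1^i+F_2^i$ together with the identity $\Dm^i(F_2^i(u_{\alpha+e_i}))=\Dp^i(F_2^i(u_\alpha))$, this expression splits, term by term in $i$, into a convective contribution
\[
  \left(\Dm^i\eta_1^i(u_\alpha)-S'(u_\alpha)\Dm^i F_1^i(u_\alpha)\right)
  +\left(\Dp^i\eta_2^i(u_\alpha)-S'(u_\alpha)\Dp^i F_2^i(u_\alpha)\right)
\]
and a diffusive contribution $S'(u_\alpha)\Dm^i\Dp^i A(u_\alpha)-\Dm^i\Dp^i\Phi(u_\alpha)$.

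The core of the argument is then four applications of the numerical chain rule, Lemma~\ref{lemma:NumericalChainRule}: with $g=F_1^i$ and $(a,b)=(u_\alpha,u_{\alpha-e_i})$; with $g=F_2^i$ and $(a,b)=(u_\alpha,u_{\alpha+e_i})$; and with $g=A$ applied to each of the two halves of the centred second difference, i.e. $(a,b)=(u_\alpha,u_{\alpha+e_i})$ and $(a,b)=(u_\alpha,u_{\alpha-e_i})$. In each case the two ``entropy-flux'' integrals $\int_0^b-\int_0^a$ produced by the lemma cancel exactly against the matching $\Dm^i\eta$, $\Dp^i\eta$, or $\Dm^i\Dp^i\Phi$ term, leaving only a remainder of the form $\tfrac{1}{\Dx}\int_a^b S''(z)(g(z)-g(b))\,dz$ (respectively $\tfrac{1}{\Dx^2}\int_a^b$ for the diffusive halves). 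Rewriting each remainder via $\int_b^a h=\int_\R h(\xi)(\chi(a;\xi)-\chi(b;\xi))\,d\xi$ turns it into an integral of the form $-\int_\R S''(\xi)(\cdots)\Dm^i\chi(u_\alpha;\xi)\,d\xi$ or $-\int_\R S''(\xi)(\cdots)\Dp^i\chi(u_\alpha;\xi)\,d\xi$, and summing over $i$ and collecting all pieces one recognizes exactly $-\int_\R S''(\xi)(m_F+m_A)\,d\xi$. Since $m_F,m_A$ are compactly supported in $\xi$, an integration by parts gives $-\int_\R S''(\xi)(m_F+m_A)\,d\xi=\int_\R S'(\xi)\,\partial_\xi(m_F+m_A)\,d\xi$, which is precisely the tested right-hand side; as this holds for all such $S$, the distributional identity follows.

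No genuine analytic difficulty arises — everything is exact algebra driven by Lemma~\ref{lemma:NumericalChainRule}. The main obstacle is purely bookkeeping: selecting the correct triple $(a,b,g)$ in each invocation of the numerical chain rule so that the flux integrals cancel with the right sign, keeping straight the sign relations $\chi(u_{\alpha+e_i};\xi)-\chi(u_\alpha;\xi)=\Dx\,\Dp^i\chi(u_\alpha;\xi)$ and $\chi(u_{\alpha-e_i};\xi)-\chi(u_\alpha;\xi)=-\Dx\,\Dm^i\chi(u_\alpha;\xi)$, and pairing the two halves of $\Dm^i\Dp^i A(u_\alpha)$ with the two summands defining $m_A$. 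It is worth noting that only the \emph{splitting} $F^i=F_1^i+F_2^i$ of the numerical flux enters this lemma (it is what lets one disentangle a $\Dm$-part from a $\Dp$-part); monotonicity of $F^i$ plays no role here.
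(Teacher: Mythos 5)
Your proposal is correct and is essentially the paper's own argument: the same four invocations of Lemma~\ref{lemma:NumericalChainRule} (with $g=F_1^i,F_2^i$ and with $g=A$ on the two halves of $\Dm^i\Dp^i$), the same use of the splitting $F^i=F_1^i+F_2^i$ and of \eqref{eq:ReprOfComposedFuncChiFunc} to convert the remainders into the $m_F$ and $m_A$ terms, merely run in the reverse direction (testing the kinetic equation with $S'(\xi)$ and reducing to the scheme, rather than multiplying the scheme by $S'(u_\alpha)$ and building up). The only caveat, shared equally by the paper, is that testing only against $S'$ with $S'(0)=0$ determines the identity up to a contribution supported at $\xi=0$, which is removed by also testing with a cutoff equal to $1$ on the range of the $u_\beta$'s, where both sides reduce to the scheme itself.
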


\begin{proof}
  Since $\seq{u_\alpha}$ is a solution of
  \eqref{eq:Semi-discreteScheme},
  \begin{multline}\label{eq:SemiDiscTimesEnt}
    S'(u_\alpha(t))\partial_tu_\alpha(t) + \sum_{i = 1}^d
    S'(u_\alpha(t))\Dm^i F^i(u_\alpha(t),u_{\alpha+e_i}(t)) \\
    = \sum_{i = 1}^d S'(u_\alpha(t))\Dm^i\Dp^i A(u_\alpha(t)),
  \end{multline}
  for all $t \in (0,T)$ and $\alpha \in \Z^d$. By the chain rule
  \begin{equation*}
    S'(u_\alpha(t))\partial_tu_\alpha(t) = \partial_tS(u_\alpha(t)).
  \end{equation*}
  Consider the flux term. For each $i$, we have that
  $F^i=F_2^i+F_2^i$, and therefore
  \begin{equation*}
    S'(u_\alpha(t))\Dm^i F^i(u_\alpha(t),u_{\alpha+e_i}(t)) =
    S'(u_\alpha(t))\Dm^i F_1^i(u_\alpha) + S'(u_\alpha(t))\Dp^i
    F_2^i(u_\alpha). 
  \end{equation*}
  By Lemma~\ref{lemma:NumericalChainRule}, with $g$ equal to $F_1^i$ and
  $F_2^i$, we obtain
  \begin{align*}
    S'(u_\alpha(t))\Dm^i F_1^i(u_\alpha) &= \Dm^i Q_1^i(u_\alpha) -
    \frac{1}{\Dx}\int_{u_\alpha}^{u_{\alpha-e_i}}S''(z)(F_1^i(z)-F_1^i(u_{\alpha-e_i}))\,dz,
    \\ 
    S'(u_\alpha(t))\Dp^i F_2^i(u_{j}) &= \Dp^i Q_2^i(u_\alpha) +
    \frac{1}{\Dx}\int_{u_\alpha}^{u_{\alpha +
        e_i}}S''(z)(F_2^i(z)-F_2^i(u_{\alpha + e_i}))\,dz,
  \end{align*}
  where
  \begin{equation*}
    Q_j^i(u) := \int_0^u S'(z)(F_j^i)'(z)\,dz \,\mbox{ for $j = 1,2$.}
  \end{equation*}
  % Summing over $1 \le i \le d$ we get
  % \begin{multline*}
  %   \sum_{i = 1}^d S'(u_\alpha(t))\Dm^i
  %   F^i(u_\alpha(t),u_{\alpha+e_i}(t)) = \\
  %   \sum_{i = 1}^d \Dm^i Q_1^i(u_\alpha) -
  %   \sum_{i=1}^d\frac{1}{\Dx}\int_{u_\alpha}^{u_{\alpha-e_i}}S''(z)(F_1^i(z)-F_1^i(u_{\alpha-e_i}))\,dz
  %   \\
  %   +\sum_{i = 1}^d \Dp^i Q_2^i(u_\alpha) + \sum_{i =
  %     1}^d\frac{1}{\Dx}\int_{u_\alpha}^{u_{\alpha +
  %       e_i}}S''(z)(F_2^i(z)-F_2^i(u_{\alpha + e_i}))\,dz
  % \end{multline*}
  Consider the term on the right-hand side of \eqref{eq:SemiDiscTimesEnt}. Let
  \begin{equation*}
    R(u) := \int_0^u S'(z)A'(z)\,dz.
  \end{equation*}
  Fix $i$ and apply Lemma~\ref{lemma:NumericalChainRule} with $g = A$,
  $a = u_\alpha$, $b = u_{\alpha-e_i}$, and $u_{\alpha+e_i}$. Adding
  the two equations yields
  \begin{align*}
    S'(u_\alpha)\Dm^i \Dp^i(A(u_\alpha)) &= \Dm^i \Dp^i R(u_\alpha) \\
    &\quad
    +\frac{1}{\Dx^2}\int_{u_\alpha}^{u_{\alpha+e_i}}S''(z)(A(z)-A(u_{\alpha+e_i}))\,dz
    \\
    &\quad +
    \frac{1}{\Dx^2}\int_{u_\alpha}^{u_{\alpha-e_i}}S''(z)(A(z)-A(u_{\alpha-e_i}))\,dz.
  \end{align*}
  Hence \eqref{eq:SemiDiscTimesEnt} turns into 
  \begin{align*}
    \partial_tS(u_\alpha) + \sum_{i=1}^d\bigl(\Dm^i Q_1^i(u_\alpha) &+
    \Dp^i Q_2^i(u_\alpha)\bigr) -\sum_{i=1}^d\Dm^i\Dp^i
    R(u_\alpha)  \\
    &= \sum_{i=1}^d\frac{1}{\Dx}\int_{u_\alpha}^{u_{\alpha-e_i}}
    S''(z)(F_1^i(z)-F_1^i(u_{\alpha-e_i}))\,dz
    \\
    &\qquad -\sum_{i=1}^d\frac{1}{\Dx}\int_{u_\alpha}^{u_{\alpha+e_i}}
    S''(z)(F_2^i(z)-F_2^i(u_{\alpha+e_i}))\,dz
    \\
    &\qquad
    +\sum_{i=1}^d\frac{1}{\Dx^2}\int_{u_\alpha}^{u_{\alpha+e_i}}
    S''(z)(A(z)-A(u_{\alpha+e_i}))\,dz  \\
    &\qquad
    +\sum_{i=1}^d\frac{1}{\Dx^2}\int_{u_\alpha}^{u_{\alpha-e_i}}
    S''(z)(A(z)-A(u_{\alpha-e_i}))\,dz.
  \end{align*}
  By equation~\eqref{eq:ReprOfComposedFuncChiFunc},
  \begin{equation*}
    \Dm^i Q_1^i(u_\alpha) + \Dp^i Q_2^i(u_\alpha) = \int_\R
    S'(\xi)((F_1^i)'(\xi)\Dm^i + (F_2^i)'(\xi)\Dp^i)
    \chi(u_\alpha;\xi)\,d\xi. 
  \end{equation*}
  Similarly,
  \begin{equation*}
    \Dm^i\Dp^i R(u_\alpha) = \int_\R S'(\xi)A'(\xi) \Dm^i\Dp^i
    \chi(u_\alpha;\xi) \,d\xi. 
  \end{equation*}
  Consider the right-hand side. For any $g \in C(\R)$,
  \begin{equation*}
    \int_a^b S''(z)(g(z)-g(b)) \,dz = \int_\R
    S''(\xi)(g(\xi)-g(b))\left(\chi(b;\xi)-\chi(a;\xi)\right)\,d\xi. 
  \end{equation*}
  Hence
  \begin{align*}
    &\frac{1}{\Dx}
    \int_{u_\alpha}^{u_{\alpha-e_i}}(F_1^i(z)-F_1^i(u_{\alpha-e_i})\,dz
    = -\int_\R S''(\xi)(F_1^i(\xi)-F_1^i(u_{\alpha-e_i}))\Dm^i
    \chi(u_\alpha;\xi)\,d\xi, \\
    &\frac{-1}{\Dx}
    \int_{u_\alpha}^{u_{\alpha+e_i}}(F_2^i(z)-F_2^i(u_{\alpha+e_i})\,dz
    = -\int_\R S''(\xi)(F_2^i(\xi)-F_2^i(u_{\alpha+e_i}))\Dp^i
    \chi(u_\alpha;\xi)\,d\xi.
  \end{align*}
  Similarly,
  \begin{align*}
    \frac{1}{\Dx^2}\int_{u_\alpha}^{u_{\alpha+e_i}}S''(z)&(A(z)-A(u_{\alpha+e_i}))\,dz
    \\
    &=\frac{-1}{\Dx}\int_\R S''(\xi)
    (A(u_{\alpha+e_i})-A(\xi))\Dp^i\chi(u_\alpha;\xi)\,d\xi, \\
    \frac{1}{\Dx^2}\int_{u_\alpha}^{u_{\alpha-e_i}}S''(z)&(A(z)-A(u_{\alpha-e_i}))\,dz
    \\
    &=\frac{-1}{\Dx}\int_\R S''(\xi)
    (A(\xi)-A(u_{\alpha-e_i}))\Dm^i\chi(u_\alpha;\xi)\,d\xi.
  \end{align*}
  The result follows.
\end{proof}
For a function $u: \R^d \rightarrow \R$ we define the shift 
operator $S_y$ by $S_yu(x) = u(x+y)$. Then the discrete derivatives
may be expressed as
\begin{equation*}
  D^i_\pm u = \pm\frac{S_{\pm \Dx_i}u-u}{\Dx},
\end{equation*}
where $\Dx_i = \Dx\, e_i$. 

Making a change of variable $\zeta= A(\xi)$, we can obtain 
an equation satisfied by $\chi(A(u_\Dx);\zeta)$, where $u_\Dx$ is the 
numerical solution \eqref{eq:PiecewiseConstantFuncDef},  
resulting in the ``discrete" kinetic formulation to be utilized later.
\begin{lemma}\label{lemma:KineticSemiDiscEquATransformed}
  Suppose $A' > 0$. Let $\seq{u_\alpha}$ be the solution to
  \eqref{eq:Semi-discreteScheme} and define  $u_\Dx$ by 
  \eqref{eq:PiecewiseConstantFuncDef}. Let $G_j:\R \rightarrow \R^d$
  satisfy $G_j(A(u)) = F_j(u)$ $\forall u$, for $j = 1$, $2$. Then
  $\rho^\Dx(t,x,\zeta) = \chi(A(u_\Dx(t,x));\zeta)$ satisfies
    \begin{align*}
     & B'(\zeta)\partial_t \rho^\Dx + (G_1'(\zeta) \cdot \Dp +
      G_2'(\zeta) \cdot \Dm)\rho^\Dx 
      -\Dm \cdot \Dp \rho^\Dx
      = \partial_\zeta(n_A^\Dx + n_G^\Dx), 
      \\  & \rho^\Dx(0,\zeta) = \chi(A(u^0_\Dx);\zeta),
  \end{align*}
  in $\mathcal{D}'(\R \times \Pi_T)$, where
  \begin{multline*}
    n_G^\Dx = \sum_{i=1}^d
    (G_1^i(\zeta)-G_1^i(A(S_{-\Dx_i}u_\Dx)))\Dm^i \chi(A(u_\Dx);\zeta)
    \\ + \sum_{i=1}^d(G_2^i(\zeta)-G_2^i(A(S_{\Dx_i}u_\Dx)))\Dp^i
    \chi(A(u_\Dx);\zeta)
  \end{multline*}
  and
  \begin{multline}\label{eq:nadxdef}
    n_A^\Dx =
    \sum_{i=1}^d\frac{1}{\Dx}(A(S_{\Dx_i}u_{\Dx})-\zeta)\Dp^i\chi(A(u_\Dx);\zeta)
    \\ 
    + \sum_{i=1}^d\frac{1}{\Dx}(\zeta-A(S_{-\Dx_i}u_\Dx))\Dm^i
    \chi(A(u_\Dx);\zeta).
  \end{multline}
\end{lemma}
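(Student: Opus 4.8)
The plan is to mimic the derivation of Lemma~\ref{lemma:KineticEquationATransformed} in the continuous case, but starting from the ``pre-transformed'' discrete kinetic equation of Lemma~\ref{lemma:KineticFormOfSemidiscreteScheme} and pushing it through the substitution $\zeta = A(\xi)$. First I would write down the identity of Lemma~\ref{lemma:KineticFormOfSemidiscreteScheme} for a fixed cell, namely the equation satisfied by $\rho_\alpha(t,\xi) = \chi(u_\alpha(t);\xi)$, and test it against $S'(A(\xi))$ for an arbitrary $S \in C^2(\R)$ with $S'(0) = 0$, integrating in $\xi$ over $\R$. Since $A' > 0$, the map $\xi \mapsto A(\xi)$ is a $C^1$ diffeomorphism of $\R$, so I may change variables $\zeta = A(\xi)$, $d\zeta = A'(\xi)\,d\xi$, and use the elementary fact $\chi(u_\alpha;\xi) = \chi(A(u_\alpha);A(\xi))$ (the $\chi$ function only records the sign and the interval between $0$ and its first argument, both of which are preserved by the increasing map $A$ fixing $0$). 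This turns every occurrence of $\chi(u_\alpha;\xi)$ into $\chi(A(u_\alpha);\zeta) = \chi(A(u_\Dx);\zeta)$ evaluated at the appropriate cell, i.e.\ into $\rho^\Dx$ and its spatial shifts.

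Next I would track what happens to each of the three groups of terms under the change of variables. The time term picks up the Jacobian factor: $\int S'(A(\xi))\,\partial_t\chi(u_\alpha;\xi)\,d\xi$ becomes $\int S'(\zeta) B'(\zeta)\,\partial_t \rho^\Dx\,d\zeta$, because $B' = 1/(A'\circ B)$ is exactly the factor that converts $d\xi$ into $d\zeta$; this is the origin of the $B'(\zeta)$ in front of $\partial_t$. For the convective term, the coefficients $(F_1^i)'(\xi)$ and $(F_2^i)'(\xi)$ become, after the substitution and using $G_j(A(\cdot)) = F_j(\cdot)$ hence $(G_j^i)'(\zeta) = (F_j^i)'(B(\zeta)) B'(\zeta)$, precisely $(G_j^i)'(\zeta)$ times the Jacobian $A'(\xi)\,d\xi = d\zeta$ — wait, more carefully, $(F_j^i)'(\xi)\,d\xi = (G_j^i)'(A(\xi))A'(\xi)\,d\xi = (G_j^i)'(\zeta)\,d\zeta$, so the forward/backward difference operators $\Dm^i, \Dp^i$ pass through untouched (they act on the cell index $\alpha$, which is unaffected by the $\zeta$-substitution) and we land on $(G_1^i)'(\zeta)\,\Dm^i + (G_2^i)'(\zeta)\,\Dp^i$ acting on $\rho^\Dx$; note the roles of $\Dm$ and $\Dp$ here match the statement. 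The diffusion term $\int S'(A(\xi)) A'(\xi)\,\Dm^i\Dp^i\chi(u_\alpha;\xi)\,d\xi$ is the cleanest: $S'(A(\xi))A'(\xi)\,d\xi = S'(\zeta)\,d\zeta$, so the weight collapses to $S'(\zeta)$ and we get exactly $\int S'(\zeta)\,\Dm^i\Dp^i\rho^\Dx\,d\zeta$, explaining why no coefficient survives in front of $\Dm\cdot\Dp\,\rho^\Dx$ (contrast with the $A'(\xi)$ in Lemma~\ref{lemma:KineticFormOfSemidiscreteScheme}, which has been absorbed).

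For the measure terms on the right-hand side I would do the same bookkeeping on $m_F$ and $m_A$. In $m_F$, a generic summand $(F_1^i(\xi) - F_1^i(u_{\alpha-e_i}))\,\Dm^i\chi(u_\alpha;\xi)$ becomes, under $\xi = B(\zeta)$, $(G_1^i(\zeta) - G_1^i(A(S_{-\Dx_i}u_\Dx)))\,\Dm^i\chi(A(u_\Dx);\zeta)$, which is exactly the first sum defining $n_G^\Dx$ (and similarly for the $F_2^i$ part giving the $n_A$-style $\Dp^i$ sum); since $m_F$ appears under $\partial_\xi$ and $\partial_\xi = A'(\xi)\,\partial_\zeta \cdot (d\xi/d\zeta)^{-1}$-type relations must be handled with care, I would instead argue directly at the level of distributions: testing $\partial_\xi(m_F + m_A)$ against $S'(A(\xi))$ and integrating by parts in $\xi$ gives $-\int S''(A(\xi))A'(\xi)(m_F + m_A)\,d\xi = -\int S''(\zeta)(m_F + m_A)\big|_{\xi = B(\zeta)}\,d\zeta$, and the transformed integrand is precisely $n_G^\Dx + n_A^\Dx$, so this equals $\int S'(\zeta)\,\partial_\zeta(n_G^\Dx + n_A^\Dx)\,d\zeta$ after integrating by parts back. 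Since $S$ (equivalently $S'\circ A$ with $S'(0)=0$, a dense enough class as $S$ ranges over $C^2$) is arbitrary, all four transformed identities combine to give the asserted equation in $\mathcal{D}'(\R\times\Pi_T)$; the initial condition follows by evaluating $\chi(u_{\alpha,0};\xi) = \chi(A(u_{\alpha,0});A(\xi))$ at $t=0$. The main technical nuisance — not really an obstacle — is the careful justification of the distributional integration by parts in $\xi$ under the nonlinear substitution, i.e.\ checking that $\partial_\xi$ of a measure transforms correctly and that the test-function class $\{S'\circ A : S\in C^2,\ S'(0)=0\}$ is rich enough to separate distributions in $\zeta$; both are routine given $A'>0$.
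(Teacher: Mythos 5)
Your proposal is correct and follows essentially the paper's own argument: the paper likewise tests the discrete kinetic identity of Lemma~\ref{lemma:KineticFormOfSemidiscreteScheme} against a test function composed with $A$ (there $S_A(\xi)=S(A(\xi))$, $S\in C^\infty_c$), changes variables $\zeta=A(\xi)$ using $\chi(u_\alpha;\xi)=\chi(A(u_\alpha);A(\xi))$, absorbs the Jacobian into $B'(\zeta)$, $G_j'$ and the diffusion weight, and reads off $n_G^\Dx$ and $n_A^\Dx$ from the transformed right-hand side exactly as you do. One remark: your computation (like the paper's proof) yields the pairing $G_1'(\zeta)\cdot\Dm + G_2'(\zeta)\cdot\Dp$, since the substitution in $\xi$ cannot interchange the spatial difference operators, so your aside that this ``matches the statement'' holds only up to what appears to be a typographical swap of $\Dp$ and $\Dm$ in the printed lemma; this is not a gap in your argument.
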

\begin{proof}
  Let $S \in C_c^\infty(\R)$ and define $S_A(\xi) = S(A(\xi))$. 
  By Lemma~\ref{lemma:KineticFormOfSemidiscreteScheme},
  \begin{align*}
    &\partial_t\int_\R S_A(\xi) \chi(u_\alpha;\xi)\,d\xi +
    \int_\R S_A(\xi)(F'_1(\xi) \cdot \Dm + F'_2(\xi) \cdot \Dp)\chi(u_\alpha;\xi)\,d\xi \\
    &- \int_\R S_A(\xi)A'(\xi)\Dm \cdot \Dp \chi(u_\alpha;\xi)\,d\xi =
    -\int_\R S_A'(\xi)(m_F + m_A)\,d\xi.
  \end{align*}
  Let $\zeta = A(\xi)$ and note that $\chi(u_\alpha;\xi) =
  \chi(A(u_\alpha);A(\xi))$. The terms on the left-hand side are
  straightforward to verify. Next,
  \begin{align*}
    \int_\R S_A'(\xi)&m_F(\xi)\,d\xi \\
    & = \sum_{i=1}^d \int_\R
    S'(A(\xi))(G_1^i(A(\xi))-G_1^i(A(u_{\alpha-e_i})))\Dm^i
    \chi(A(u_\alpha);A(\xi)) A'(\xi) \,d\xi \\
    & \quad + \sum_{i=1}^d\int_\R
    S'(A(\xi))(G_2^i(A(\xi))-G_2^i(A(u_{\alpha+e_i})))\Dp^i
    \chi(A(u_\alpha);A(\xi)) A'(\xi) \,d\xi \\
    &= \sum_{i=1}^d\int_\R S'(\zeta)
    (G_1^i(\zeta)-G_1^i(A(u_{\alpha-e_i})))\Dm^i
    \chi(A(u_\alpha);\zeta)\,d\zeta \\
    & \quad + \sum_{i=1}^d\int_\R
    S'(\zeta)(G_2^i(\zeta)-G_2^i(A(u_{\alpha+e_i})))\Dp^i
    \chi(A(u_\alpha);\zeta) \,d\zeta.
  \end{align*}
  A similar computation shows the 
  second equality involving $n_A^\Dx$.
\end{proof}

\subsection{Various regularizations}
In this section we study mollified versions of 
Lemma~\ref{lemma:KineticEquationATransformed} and
\ref{lemma:KineticSemiDiscEquATransformed}. Let us first introduce 
some notation. Let $J \in C^\infty_c(\R)$ denote a function satisfying
\begin{equation*}
  \mbox{supp}(J) \subset [-1,1], \quad \int_{\R} J(x) \,dx = 1 \mbox{ and } J(-x) = J(x)
\end{equation*}
for all $x \in \R$. That is, $J$ is a symmetric mollifier on $\R$ with
support in $[-1,1]$. For any $\sigma > 0$ we let $J_\sigma(x) =
\sigma^{-1}J(\sigma^{-1}x)$. For any $n \ge 1$, $J^{\otimes n}_\sigma$
is a symmetric mollifier on $\R^n$ with support in
$[-\sigma,\sigma]^n$. In general the dimension of the argument will
define $n$, so to simplify the notation we write $J_\sigma$ instead of
$J^{\otimes n}_\sigma$.

Let $\psi:\R^2 \rightarrow \R$ be a continuous function and 
$u, v\in L^1(\R)$. Then we define
\begin{equation*}
  (\psi(u,v) \conv{u,v} f \otimes g)(x) 
  := \int_\R\int_\R \psi(u(y_1),v(y_2))f(x-y_1)g(x-y_2)\,dy_1dy_2,
\end{equation*}
where $f,g \in L^1(\R)$. Similarly, we let
\begin{equation*}
  \psi(u,v) \conv{u} f := \int_\R \psi(u(y),v(x))f(x-y)\,dy. 
\end{equation*}
This notation generalizes in an obvious way 
to functions of several variables.

We start by introducing regularizations of 
$\sgn(\cdot)$ and $\chi(u;\cdot)$
\begin{lemma}\label{lemma:MollifiedChiFuncProp}
  For $\eps > 0$, define
  \begin{equation*}
    \signe{\xi} := 2\int_0^\xi J_\eps(\zeta)\,d\zeta, 
    \quad 
    \chi_\eps(u;\xi) := \int_\R \chi(u;\zeta)
    J_\eps(\xi-\zeta)\,d\zeta.
  \end{equation*}
  Then
  \begin{itemize}
  \item[(i)] For each $\xi$, $u \mapsto \chi_\eps(u;\xi)
    \in C^\infty(\R)$ and $\partial_u \chi_\eps(u;\xi) =
    J_\eps(\xi-u)$.
  \item[(ii)] For all $u$ and $\xi$
    \begin{equation*}
      \signe{\xi}-2\chi_\eps(u;\xi) = \signe{\xi-u}.
    \end{equation*}
  \item[(iii)] For any $u$
    \begin{equation*}
      \int_\R \abs{\chi_\eps(u;\xi)-\chi(u;\xi)} \,d\xi \le 4\eps.
    \end{equation*}
  \end{itemize}
\end{lemma}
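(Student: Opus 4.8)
The plan is to first record the explicit formula
\[
  \chi_\eps(u;\xi) = \int_0^u J_\eps(\xi-\zeta)\,d\zeta,
\]
valid for every $u \in \R$ with the usual convention $\int_0^u = -\int_u^0$ when $u<0$. This is immediate from the definition of $\chi_\eps$ together with the observation that $\chi(u;\cdot)$ equals $\sgn(u)$ on the interval with endpoints $0$ and $u$ and vanishes elsewhere. From this identity, part (i) follows by the fundamental theorem of calculus: the right-hand side is an antiderivative of the smooth integrand $\zeta \mapsto J_\eps(\xi-\zeta)$ evaluated at the endpoint $u$, hence $C^\infty$ in $u$, with $\partial_u\chi_\eps(u;\xi) = J_\eps(\xi-u)$.

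For part (ii), starting from the same formula I would substitute $\eta = \xi-\zeta$ to obtain $2\chi_\eps(u;\xi) = 2\int_{\xi-u}^{\xi} J_\eps(\eta)\,d\eta$, split this as $2\int_0^\xi - 2\int_0^{\xi-u}$, and recognize $\signe{\xi} - \signe{\xi-u}$; rearranging gives the stated identity. (The symmetry of $J$ is not needed for this step, although it is what makes $\signe{\cdot}$ odd.)

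For part (iii), I would use $\int_\R J_\eps = 1$ to write $\chi(u;\xi) = \int_\R \chi(u;\xi)\,J_\eps(\xi-\zeta)\,d\zeta$, so that
\[
  \chi_\eps(u;\xi) - \chi(u;\xi) = \int_\R\bigl(\chi(u;\zeta)-\chi(u;\xi)\bigr)J_\eps(\xi-\zeta)\,d\zeta.
\]
Since $\chi(u;\cdot)$ takes values only in $\{0,\sgn u\}$, both $\chi_\eps(u;\xi)$ and $\chi(u;\xi)$ lie in the interval with endpoints $0$ and $\sgn u$, whence $\abs{\chi_\eps(u;\xi)-\chi(u;\xi)}\le 1$ pointwise; moreover, as $J_\eps$ is supported in $[-\eps,\eps]$, the difference vanishes unless $\xi$ is within distance $\eps$ of one of the two endpoints $0$ or $u$ of $\mathrm{supp}\,\chi(u;\cdot)$, a set of Lebesgue measure at most $4\eps$. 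Integrating the bound $1$ over this set yields the estimate. Alternatively, one may apply Tonelli's theorem together with the $L^1$-translation bound $\norm{\chi(u;\cdot-h)-\chi(u;\cdot)}_{L^1(\R)}\le 2\abs{h}$ for $\abs{h}\le\eps$.

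None of these steps is a genuine obstacle; the only point needing a little care is the bookkeeping in (iii) — in particular the case $\abs{u}<2\eps$, where the two $\eps$-neighborhoods of the endpoints overlap (which only improves the measure bound), and the use of the uniform bound $\abs{\chi_\eps-\chi}\le 1$ rather than the naive $2$.
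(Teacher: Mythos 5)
Your proposal is correct. Parts (i) and (iii) are in substance the paper's argument: the paper also identifies $\chi_\eps(u;\cdot)$ as an antiderivative of $J_\eps$ (via the difference quotient and a primitive $H_\eps$ of $J_\eps$) for (i), and for (iii) it likewise observes that $\chi_\eps(u;\xi)-\chi(u;\xi)$ vanishes off $(-\eps,\eps)\cup(u-\eps,u+\eps)$ and integrates the trivial bound over a set of measure at most $4\eps$. Where you genuinely diverge is (ii): you exploit the explicit primitive formula $\chi_\eps(u;\xi)=\int_0^u J_\eps(\xi-\zeta)\,d\zeta$ and a single change of variables to read off $2\chi_\eps(u;\xi)=\signe{\xi}-\signe{\xi-u}$ directly, correctly noting that the symmetry of $J$ plays no role there. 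The paper instead works from the convolution form, uses the translation identity $\chi(u;\sigma+\xi)=\chi(u-\xi;\sigma)-\chi(-\xi;\sigma)$ together with $\chi(-\xi;\sigma)=-\chi(\xi;-\sigma)$, and kills one of the two resulting terms by pairing the antisymmetry of $\zeta\mapsto\chi(\xi;\zeta)-\chi(\xi;-\zeta)$ against the symmetry of $J_\eps$. Your route is shorter and slightly more general (no symmetry hypothesis needed for (ii)); the paper's manipulation has the minor advantage of staying entirely within the $\chi$-calculus identities (such as \eqref{eq:ChiDiffEqu}) that are reused elsewhere in the paper. Your bookkeeping in (iii), including the overlap case $\abs{u}<2\eps$ and the pointwise bound $\abs{\chi_\eps-\chi}\le 1$, is sound.
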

\begin{proof}
  We first prove (i). Let $H_\eps'(\sigma) =
  J_\eps(\sigma)$. Since $J_\eps(\xi-\zeta) =
  J_\eps(\zeta-\xi)$,
  \begin{multline*}
    \lim_{h \rightarrow 0}\frac{1}{h}\int_\R (\chi(u+h;\zeta)-\chi(u;\zeta))
    J_\eps(\xi-\zeta)\,d \zeta \\
    = \lim_{h \rightarrow 0}\frac{1}{h}
    \left(H_\eps(u+h-\xi)-H_\eps(u-\xi)\right)
    = J_\eps(u-\xi).
  \end{multline*}
  Next we prove (ii). Let $\sigma = \zeta-\xi$. 
  By the symmetry of $J_\eps$,
  \begin{equation*}
    \chi_\eps(u;\xi) = \int_\R \chi(u;\sigma +
    \xi)J_\eps(\sigma)\,d\sigma.  
  \end{equation*}
  A calculation (or \eqref{eq:ChiDiffEqu}) yields
  \begin{equation*}
    \chi(u;\sigma + \xi) = \chi(u-\xi;\sigma)-\chi(-\xi;\sigma). 
  \end{equation*}
  Note that $\chi(-\xi;\sigma) = -\chi(\xi;-\sigma)$. Hence
  \begin{equation*}
    \chi_\eps(u;\xi) = \int_\R
    (\chi(u-\xi;\zeta)+\chi(\xi;-\zeta))J_\eps(\zeta)\,d\zeta. 
  \end{equation*}
  It follows that
  \begin{align*}
    \signe{\xi}-2\chi_\eps(u;\xi) &= -2 \int_\R
    \chi(u-\xi;\zeta)J_\eps(\zeta)\,d\zeta
    + 2\int_\R (\chi(\xi;\zeta)-\chi(\xi;-\zeta))J_\eps(\zeta)\,d\zeta \\
    &=: \term_1 + \term_2.
  \end{align*}
  Since $(\chi(\xi;\zeta)-\chi(\xi;-\zeta))$ is antisymmetric in
  $\zeta$ and $J_\eps$ is symmetric it follows that $\term_2 =
  0$. Now
  \begin{equation*}
    \term_1 = -2 \int_0^{u-\xi}J_\eps(\zeta)\,d\zeta = 2
    \int_0^{\xi-u}J_\eps(\zeta)\,d\zeta = \signe{\xi-u}. 
  \end{equation*}
  To prove (iii), note that
  \begin{equation*}
    \abs{\chi_\eps(u;\xi)-\chi(u;\xi)} = 
    0 \text{ whenever } \xi \notin (-\eps,\eps) \cup
    (u-\eps,u + \eps). 
  \end{equation*}
\end{proof}
For $\eps > 0$ and  $f \in C(\R)$, let
$R^f_\eps:\R^2 \rightarrow \R$ be defined by
\begin{equation}\label{eq:RfDefinition}
  \int_\R f(\sigma)\chi(u;\sigma)J_\eps(\zeta-\sigma)\,d\sigma
  = R^f_\eps(u,\zeta) + f(\zeta)\chi_\eps(u;\zeta),
\end{equation}
for all $u,\zeta \in \R$. 

Now we are ready to provide ``regularized" versions
of Lemmas \ref{lemma:KineticEquationATransformed} and
\ref{lemma:KineticSemiDiscEquATransformed}. As the mollification 
will take place on a slightly smaller region, we introduce the notation 
$\Pi_T^{r_0} := (r_0, T-r_0) \times \R^d$. 

We start with the regularization 
of the kinetic formulation of the convection-diffusion equation.  
\begin{lemma}\label{lemma:KineticFormATransformedMollified}
  Assume that $A' > 0$. Let $u$ be the solution of
  \eqref{eq:CD} and define
  \begin{equation*}
    \rho_{\eps,r,r_0} := 
    \chi(A(u);\cdot) \star J_{r_0} 
    \otimes J_r \otimes J_\eps. 
  \end{equation*}
  Then for $(t,x,\zeta)\in \Pi_T^{r_0} \times \R$, the function 
  $\rho_{\eps,r,r_0}$ satisfies
  \begin{equation*}
    B'(\zeta)\partial_t \rho_{\eps,r,r_0} + g'(\zeta) \cdot
    \nabla \rho_{\eps,r,r_0} - \Delta \rho_{\eps,r,r_0}
    + \partial_t R_{\eps,r,r_0}^{B'} + \nabla \cdot
    R_{\eps,r,r_0}^{g'} = \partial_\zeta
    n_{A,\eps,r,r_0},
  \end{equation*}
  where
  \begin{equation*}
    R^f_{\eps,r,r_0} = R^f_\eps(A(u),\zeta) \star J_r
    \otimes J_{r_0},
  \end{equation*}
  with $R^f_\eps$ defined by
  \eqref{eq:RfDefinition}, and
  \begin{equation*}
    n_{A,\eps,r,r_0}(t,x,\zeta) =
    \left(J_\eps(\zeta-A(u)) \abs{\nabla A(u)}^2 \star J_{r_0}
      \otimes J_r\right)(t,x). 
  \end{equation*}
\end{lemma}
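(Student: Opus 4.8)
The plan is to derive this statement directly from Lemma~\ref{lemma:KineticEquationATransformed} by convolving the distributional equation with the three mollifiers and carefully tracking which terms commute with the mollification and which do not. Write $\nu_\sigma$ for the appropriate tensor product mollifier; concretely, $\rho_{\eps,r,r_0} = \chi(A(u);\cdot) \star (J_{r_0} \otimes J_r \otimes J_\eps)$, where $J_{r_0}$ acts in $t$, $J_r$ acts in $x$, and $J_\eps$ acts in $\zeta$. Since the operators $\partial_t$, $\nabla$, $\Delta$, and $\partial_\zeta$ all commute with convolution, applying them to $\rho_{\eps,r,r_0}$ is the same as convolving the corresponding derivatives of $\rho = \chi(A(u);\cdot)$. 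The difficulty is entirely with the variable coefficients $B'(\zeta)$ and $g'(\zeta)$: because these depend on $\zeta$, the operation ``multiply by $B'(\zeta)$'' does not commute with convolution in the $\zeta$ variable, which is exactly the source of the commutator terms $R^{B'}_{\eps,r,r_0}$ and $R^{g'}_{\eps,r,r_0}$.

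The key computation is therefore the commutator identity. Starting from $B'(\zeta)\partial_t \rho = \partial_\zeta m - g'(\zeta)\cdot\nabla\rho + \Delta\rho$ in $\mathcal{D}'$, I convolve both sides with $J_{r_0}\otimes J_r$ in $(t,x)$ and with $J_\eps$ in $\zeta$. On the left, I must compare $B'(\zeta)(\partial_t\rho \star \nu)$ with $(B'(\zeta)\partial_t\rho) \star \nu$; the difference is precisely what is packaged by the definition \eqref{eq:RfDefinition} of $R^f_\eps$. Indeed, writing out $\left(B'(\sigma)\partial_t\chi(A(u);\sigma)\right)\star J_\eps$ in the $\zeta$-variable and using \eqref{eq:RfDefinition} with $f = B'$ and then smoothing in $(t,x)$, one gets $B'(\zeta)\partial_t\rho_{\eps,r,r_0} + \partial_t R^{B'}_{\eps,r,r_0}$, after noting that the $t$-derivative can be pulled outside the $(t,x)$-mollification and outside the bounded-variation-in-$t$ object $R^{B'}_\eps(A(u),\zeta)$ since everything is being tested against smooth compactly supported functions on $\Pi_T^{r_0}\times\R$ — this is why the time interval is shrunk to $\Pi_T^{r_0}$, so that the mollified objects are genuinely smooth and the integration by parts in $t$ is legitimate with no boundary contribution. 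The same manipulation with $f = g'$ produces the $\nabla\cdot R^{g'}_{\eps,r,r_0}$ term (here the extra $J_\eps$ in $\zeta$ inside $R^{g'}_{\eps,r,r_0}$ comes from the definition, and the $\nabla$ commutes freely). On the right-hand side, $\partial_\zeta m \star \nu = \partial_\zeta(m\star\nu)$, and by definition $n_{A,\eps,r,r_0} = m \star (J_{r_0}\otimes J_r\otimes J_\eps)$, which since $m(t,x,\zeta) = \delta(\zeta - A(u))\abs{\nabla A(u)}^2$ convolves in $\zeta$ against $J_\eps$ to give exactly $J_\eps(\zeta - A(u))\abs{\nabla A(u)}^2$ followed by the $(t,x)$-mollification, matching the claimed formula.

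The main obstacle — and the only place where real care is needed rather than bookkeeping — is justifying that all of these manipulations, originally valid only distributionally on $(0,T)\times\R^d\times\R$, remain valid after mollification and can be read as a pointwise (or at least $\mathcal{D}'(\Pi_T^{r_0}\times\R)$) identity between the smooth functions displayed. This requires: (a) checking that $\abs{\nabla A(u)}^2 \in L^1_{loc}$ so that $m$ is a genuine locally finite measure and $n_{A,\eps,r,r_0}$ is a well-defined locally integrable function — this is supplied by (D.2), i.e., $A(u)\in L^2([0,T];H^1(\R^d))$; (b) verifying that $R^{B'}_\eps(A(u),\zeta)$ and $R^{g'}_\eps(A(u),\zeta)$ are locally integrable in $(t,x,\zeta)$, which follows from $A(u)\in L^\infty$ together with local $C^1$ regularity of $B'$ and $g'$ on the range of $A(u)$, so that $f(\sigma)$ and $f(\zeta)$ in \eqref{eq:RfDefinition} range over a compact set and $R^f_\eps$ is bounded on that set times $\mathrm{supp}(J_\eps)$; and (c) confirming that no boundary terms appear when integrating by parts in $t$ against test functions supported in $(r_0,T-r_0)$, which is immediate since such test functions vanish near $t = 0$ and $t = T$. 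Once these three points are in place, the identity is obtained by testing against an arbitrary $\varphi\in C_c^\infty(\Pi_T^{r_0}\times\R)$, moving each mollifier onto $\varphi$ (legitimate by Fubini, using the local integrability just established), applying Lemma~\ref{lemma:KineticEquationATransformed} to the smoothed test function, and moving the mollifiers back; the commutator terms $R^{B'}$ and $R^{g'}$ are precisely the obstruction to $\varphi\mapsto B'(\zeta)\varphi$ commuting with $\zeta$-mollification, and the definition \eqref{eq:RfDefinition} was set up exactly to name them.
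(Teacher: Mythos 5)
Your proposal is correct and takes essentially the same route as the paper: convolve the kinetic equation of Lemma~\ref{lemma:KineticEquationATransformed} with $J_\eps$ in $\zeta$, use the definition \eqref{eq:RfDefinition} to generate the commutator terms $R^{B'}_{\eps,r,r_0}$ and $R^{g'}_{\eps,r,r_0}$, and then mollify in $(t,x)$ with $J_r\otimes J_{r_0}$, the $\zeta$-convolution of $m$ yielding exactly $n_{A,\eps,r,r_0}$. The integrability and boundary checks you add are routine fill-ins of the paper's terse two-sentence argument.
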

\begin{proof}
Starting off from Lemma~\ref{lemma:KineticEquationATransformed}, take the
convolution of equation \eqref{eq:KineticEquationATransformed} with
$J_\eps$ and apply \eqref{eq:RfDefinition}. 
Finally, convolve the resulting equation 
with $J_r \otimes J_{r_0}$.
\end{proof}

Next up is the regularization of the kinetic formulation of the discrete 
equations.

\begin{lemma}\label{lemma:KineticFormulationOfSemiDiscATransMollified}
  Under the same assumptions and with the same notation as in
  Lemma~\ref{lemma:KineticSemiDiscEquATransformed}, define
  \begin{equation*}
    \rho_{\eps,r,r_0}^{\Dx} :=  \chi(A(u_\Dx);\cdot) \star
    J_{r_0} \otimes J_r \otimes J_\eps.  
  \end{equation*}
  For $(t,x,\zeta)\in \Pi_T^{r_0} \times \R$, the function 
  $\rho_{\eps,r,r_0}^\Dx$ satisfies
  \begin{multline*}
    B'(\zeta)\partial_t \rho_{\eps,r,r_0}^\Dx + g'(\zeta) \cdot
    \nabla \rho_{\eps,r,r_0}^\Dx
    -\Delta\rho_{\eps,r,r_0}^\Dx + G_1'(\zeta) \cdot (\Dp -
    \nabla)\rho_{\eps,r,r_0}^\Dx \\ 
    +G_2'(\zeta) \cdot(\Dm - \nabla)\rho_{\eps,r,r_0}^\Dx +
    (\Delta-\Dm \cdot \Dp) \rho_{\eps,r,r_0}^\Dx + \partial_t
    R^{B',\Dx}_{\eps,r,r_0} \\ 
    + \Dp \cdot R^{G_1',\Dx}_{\eps,r,r_0} + \Dm \cdot
    R^{G_2',\Dx}_{\eps,r,r_0}
    = \partial_\zeta(n^\Dx_{A,\eps,r,r_0} +
    n^\Dx_{G,\eps,r,r_0}).
  \end{multline*}
  Here, $R^{f,\Dx}_{\eps,r,r_0} =
  R_\eps^f(A(u_\Dx),\cdot) \star J_r \otimes J_{r_0}$ with
  $R^f_\eps$ coming from \eqref{eq:RfDefinition}. Furthermore,
  \begin{equation*}
    n^\Dx_{A,\eps,r,r_0} = n^\Dx_A \star (J_\eps \otimes J_r \otimes J_{r_0})
    \mbox{ and }
    n^\Dx_{G,\eps,r,r_0} = n^\Dx_G \star (J_\eps \otimes J_r \otimes J_{r_0}).
  \end{equation*}
\end{lemma}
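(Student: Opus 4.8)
The plan is to mimic exactly the proof of Lemma~\ref{lemma:KineticFormATransformedMollified}, but starting from the discrete kinetic formulation in Lemma~\ref{lemma:KineticSemiDiscEquATransformed} rather than the continuous one. First I would take the equation satisfied by $\rho^\Dx(t,x,\zeta) = \chi(A(u_\Dx);\zeta)$, namely
\begin{equation*}
  B'(\zeta)\partial_t \rho^\Dx + (G_1'(\zeta) \cdot \Dp + G_2'(\zeta) \cdot \Dm)\rho^\Dx - \Dm \cdot \Dp \rho^\Dx = \partial_\zeta(n_A^\Dx + n_G^\Dx),
\end{equation*}
and convolve it in the $\zeta$-variable with $J_\eps$. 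The only terms needing care are those where a coefficient depending on $\zeta$ multiplies a $\zeta$-dependent function before convolution: the products $B'(\zeta)\partial_t\chi(A(u_\Dx);\zeta)$, $G_1'(\zeta)\cdot\Dp\chi(A(u_\Dx);\zeta)$, and $G_2'(\zeta)\cdot\Dm\chi(A(u_\Dx);\zeta)$. For each such product I invoke the commutator identity \eqref{eq:RfDefinition}: convolving $f(\zeta)\chi(A(u_\Dx);\zeta)$ against $J_\eps$ produces $f(\zeta)\chi_\eps(A(u_\Dx);\zeta) + R^f_\eps(A(u_\Dx),\zeta)$, with the remainder $R^f_\eps$ carrying the defect. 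Since $\partial_t$, $\Dp$, $\Dm$ all commute with convolution in $\zeta$, this yields $B'(\zeta)\partial_t\rho^\Dx_\eps + \partial_t R^{B'}_\eps + \ldots$ where $\rho^\Dx_\eps = \chi(A(u_\Dx);\cdot)\star J_\eps$ (here the remainders are evaluated before the later $x,t$ convolutions).

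Next I would rewrite the discrete-difference operators in terms of the continuous gradient and Laplacian plus error operators: $G_1'(\zeta)\cdot\Dp = G_1'(\zeta)\cdot\nabla + G_1'(\zeta)\cdot(\Dp - \nabla)$, similarly for $G_2'$ with $\Dm$, and $-\Dm\cdot\Dp = -\Delta + (\Delta - \Dm\cdot\Dp)$. This is purely algebraic, just regrouping terms so that the ``principal part'' looks identical to the continuous kinetic equation and all the discretization errors are collected into the extra operator terms. The right-hand side $\partial_\zeta(n_A^\Dx + n_G^\Dx)$ is handled by simply convolving the dissipation-type measures $n_A^\Dx$, $n_G^\Dx$ against $J_\eps$ as well; since $\partial_\zeta$ commutes with $J_\eps$-convolution this is immediate and produces $\partial_\zeta(n^\Dx_{A,\eps} + n^\Dx_{G,\eps})$.

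Finally, I would convolve the whole resulting equation with $J_r \otimes J_{r_0}$ in the spatial and temporal variables. Because $\nabla$, $\Delta$, $\partial_t$, $\Dp$, $\Dm$, and $\partial_\zeta$ all commute with convolution in $(t,x)$, this step is mechanical: each $\rho^\Dx_\eps$ becomes $\rho^\Dx_{\eps,r,r_0} = \chi(A(u_\Dx);\cdot)\star J_{r_0}\otimes J_r\otimes J_\eps$, each remainder $R^f_\eps(A(u_\Dx),\cdot)$ becomes $R^{f,\Dx}_{\eps,r,r_0} = R^f_\eps(A(u_\Dx),\cdot)\star J_r\otimes J_{r_0}$, and each measure $n^\Dx_\bullet$ becomes $n^\Dx_{\bullet,\eps,r,r_0}$. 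The restriction to $(t,x,\zeta)\in\Pi_T^{r_0}\times\R$ is needed so the temporal mollification $J_{r_0}$ stays inside $(0,T)$. The main obstacle — though it is really more bookkeeping than conceptual difficulty — is keeping the numerous error terms straight and verifying that the discrete product rule underlying Lemma~\ref{lemma:KineticSemiDiscEquATransformed} interacts correctly with the $\zeta$-convolution; unlike the continuous case there is no chain rule, but since we are only mollifying an already-established distributional identity, no new chain-rule argument is required here, and everything reduces to the commutator formula \eqref{eq:RfDefinition} together with linearity of convolution.
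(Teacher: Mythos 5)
Your proposal is correct and follows essentially the same route as the paper: mollify the identity of Lemma~\ref{lemma:KineticSemiDiscEquATransformed} in $\zeta$ with $J_\eps$, absorbing the $\zeta$-dependent coefficients $B'$, $G_1'$, $G_2'$ via the commutator formula \eqref{eq:RfDefinition}, then convolve in $(t,x)$ with $J_r\otimes J_{r_0}$ and add and subtract (using $G_1'+G_2'=g'$) to recast $\Dp$, $\Dm$, $\Dm\cdot\Dp$ as $\nabla$, $\Delta$ plus discretization-error operators. The only difference is the harmless reordering of the add-and-subtract step relative to the $(t,x)$-mollification.
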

\begin{proof}
 In view of Lemma~\ref{lemma:KineticSemiDiscEquATransformed} and 
 \eqref{eq:RfDefinition},
  \begin{multline*}
    B'(\zeta)\partial_t \rho_\eps^\Dx + (G_1'(\zeta) \cdot \Dp 
    + G_2'(\zeta) \cdot \Dm)\rho_\eps^\Dx -\Dm \cdot \Dp \rho_\eps^\Dx \\
    + \partial_t R^{B'}_\eps(u_\Dx,\zeta) + \Dp \cdot
    R^{G_1'}_\eps(u_\Dx,\zeta) + \Dm \cdot
    R^{G_2'}_\eps(u_\Dx,\zeta)
    = \partial_\zeta(n^\Dx_{A,\eps} + n^\Dx_{G,\eps}),
  \end{multline*}
  where $\rho^\Dx_\eps(t,x,\zeta) =
  \chi_\eps(A(u_\Dx);\zeta)$ and $n^\Dx_{A,\eps} =
  n^\Dx_A \star J_\eps$ and $n^\Dx_{G,\eps} = n^\Dx_G
  \star J_\eps$. Take the convolution of the above equation with
  $J_r \otimes J_{r_0}$. Recall that $G_1' + G_2' = g'$ and add and
  subtract to obtain the result.
\end{proof}

\section{Proof of Theorem \ref{theorem:MultidLocal}}
\label{sec:errest}
We are now ready to embark on the proof of the error 
estimate (Theorem \ref{theorem:MultidLocal}). 
Instead of working directly with the microscopic contraction
functional \eqref{eq:QDef}, we introduce a 
regularized version $Q_\varepsilon$ of it.
For $u,v, \xi \in \R$, define
\begin{equation}\label{eq:Lepsdef}
  Q_{\eps}(u,v;\xi) := \signe{\xi}\chi_\eps(u;\xi) +
  \signe{\xi}\chi_\eps(v;\xi)-2\chi_\eps(u;\xi)\chi_\eps(v;\xi), 
\end{equation}
where $\mathrm{sign}_\eps$ and $\chi_\eps$ are given
in Lemma~\ref{lemma:MollifiedChiFuncProp}. One may show that
\begin{displaymath}
 \int_\R (\chi_\varepsilon(u;\xi)-\chi_\varepsilon(v;\xi))^2\,d\xi 
 = \int_\R Q_\varepsilon(u,v;\xi)\,d\xi.
\end{displaymath}
This equality is, however, not directly useful to us, since we 
will be working with functions like $\chi_\varepsilon(A(u);\xi)$ 
with $A(\cdot)$ nonlinear, but see the related Lemma~\ref{lem:absdiffs}.

\subsection{Main error equation}
We will use the kinetic formulations of the convection-diffusion equation 
and the difference method to derive a fundamental equation for the error quantity 
$Q_\eps(A(u(t,x)),A(u_\Dx(t,x));\zeta)$ (properly regularized).

\begin{lemma}\label{lemma:ContractionLemma}
  Assume that $A' > 0$. With the notation of
  Lemma~\ref{lemma:KineticFormATransformedMollified} and
  \ref{lemma:KineticFormulationOfSemiDiscATransMollified}, define
  \begin{equation*}
    Q_{\eps,r,r_0}(\zeta) = Q_\eps(A(u),A(u_\Dx);\zeta) \conv{u,u_\Dx}
    (J_{r_0} \otimes J_r) \otimes (J_{r_0} \otimes J_r). 
  \end{equation*}
  Then, for all $(t,x) \in \Pi_T^{r_0}$,
  \begin{align}
    \int_\R &B'(\zeta) \partial_tQ_{\eps,r,r_0}\,d\zeta +
    \int_\R g'(\zeta) \cdot \nabla Q_{\eps,r,r_0}\,d\zeta
    \label{eq:T0}
    \\&= \int_\R \Delta Q_{\eps,r,r_0}\,d\zeta 
    +2\int_\R \nabla \rho_{\eps,r,r_0} \cdot (2\nabla
    -(\Dp + \Dm))\rho_{\eps,r,r_0}^\Dx\,d\zeta \label{eq:T1}
    \\ 
    &\quad -\int_\R (\signe{\zeta-A(u_\Dx)} \star J_{r_0} \otimes
    J_r)\partial_tR_{\eps,r,r_0}^{B'}\,d\zeta\label{eq:T2}
    \\ 
    &\quad -\int_\R (\signe{\zeta-A(u)} \star J_{r_0} \otimes
    J_r)\partial_t R^{B',\Dx}_{\eps,r,r_0}\,d\zeta \label{eq:T3}
    \\
    &\quad -\int_\R (\signe{\zeta-A(u_\Dx)} \star J_{r_0} \otimes
    J_r)\nabla \cdot R_{\eps,r,r_0}^{g'} \,d\zeta\label{eq:T4}
    \\
    &\quad -\int_\R (\signe{\zeta-A(u)} \star J_{r_0} \otimes
    J_r)\left(\Dp \cdot R_{\eps,r,r_0}^{G_1',\Dx} + \Dm \cdot
      R_{\eps,r,r_0}^{G_2',\Dx}\right)  \, d\zeta\label{eq:T5}
    \\
    &\quad -\int_\R (\signe{\zeta-A(u)}\star J_{r_0} \otimes
    J_r)\big(G_1'(\zeta) \cdot (\Dp-\nabla) \notag
    \\
    &\hphantom{XXXXXXXXXXXXX} + G_2'(\zeta)\cdot
    (\Dm-\nabla)\big)\rho_{\eps,r,r_0}^\Dx\,d\zeta.\label{eq:T6}
    \\
    &\quad +\int_\R (\signe{\zeta-A(u)} \star J_{r_0} \otimes
    J_r)(\Delta-\Dm \cdot \Dp)\rho^\Dx_{\eps,r,r_0}\,d\zeta 
    \label{eq:T7}
    \\
    &\quad -2\int_\R (J_\eps(\zeta-A(u)) \star J_{r_0} \otimes
    J_r)n^\Dx_{G,\eps,r}\,d\zeta\label{eq:T8}
     \\
     &\quad
    -2\int_\R E_{\Dx,\eps,r,r_0}(\zeta)\,d\zeta,\label{eq:T9}
  \end{align}
  where
  \begin{equation}\label{eq:discretediss}
	  \begin{split}
    		E_{\Dx,\eps,r,r_0}(\zeta) &  = -\nabla\rho_{\eps,r,r_0}
    		\cdot (\Dp + \Dm)\rho_{\eps,r,r_0}^\Dx
		 \\ & \qquad\quad
		 +(J_\eps(\zeta-A(u_\Dx)) \star J_{r_0} \otimes J_r) n_{A,\eps,r,r_0}
		\\ & \qquad \quad
		+(J_\eps(\zeta-A(u))\star J_{r_0} 
		\otimes J_r)n^\Dx_{A,\eps,r,r_0}.
	  \end{split}
  \end{equation}

\end{lemma}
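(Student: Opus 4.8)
The plan is to derive the identity in Lemma~\ref{lemma:ContractionLemma} directly from the two mollified kinetic equations, mimicking at the regularized level the formal computation \eqref{eq:QDef}--\eqref{eq:FormalContractioneEquation} sketched in the introduction. First I would record the two governing equations: the continuous one from Lemma~\ref{lemma:KineticFormATransformedMollified} for $\rho_{\eps,r,r_0}$, and the discrete one from Lemma~\ref{lemma:KineticFormulationOfSemiDiscATransMollified} for $\rho_{\eps,r,r_0}^\Dx$. Since both equations have already been mollified in $(t,x)$ by $J_{r_0}\otimes J_r$ and in $\zeta$ by $J_\eps$, all products below are of smooth functions and the manipulations are rigorous rather than formal. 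The quantity $Q_{\eps,r,r_0}(\zeta)$ is, by \eqref{eq:Lepsdef}, a combination of $\signe{\zeta}\rho_{\eps,r,r_0}$, $\signe{\zeta}\rho_{\eps,r,r_0}^\Dx$ and the product $\rho_{\eps,r,r_0}\rho_{\eps,r,r_0}^\Dx$ (the extra double-convolution structure in $x$ just means the continuous and discrete factors are mollified about independent spatial points, so $\nabla$, $\Dp$, $\Dm$ act on the appropriate factor and commute with the other). Accordingly I would treat the three contributions to $\partial_t Q_{\eps,r,r_0}$ separately and assemble the result.

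For the $\signe{\zeta}\rho_{\eps,r,r_0}$ piece I multiply the continuous kinetic equation by $\signe{\zeta}$; using Lemma~\ref{lemma:MollifiedChiFuncProp}(ii) in the form $\signe{\zeta}=\signe{\zeta-A(u_\Dx)}+2\chi_\eps(A(u_\Dx);\zeta)$ is not yet needed here — rather, the point of introducing $Q_\eps$ is that the ``free'' $\signe{\zeta}$ terms will combine with the product term. Concretely, I write $\partial_t Q_{\eps,r,r_0}=\signe{\zeta}\partial_t\rho_{\eps,r,r_0}+\signe{\zeta}\partial_t\rho_{\eps,r,r_0}^\Dx-2\partial_t(\rho_{\eps,r,r_0}\rho_{\eps,r,r_0}^\Dx)$, multiply by $B'(\zeta)$, integrate in $\zeta$, and substitute $B'(\zeta)\partial_t\rho_{\eps,r,r_0}$ and $B'(\zeta)\partial_t\rho_{\eps,r,r_0}^\Dx$ from the two lemmas, and $B'(\zeta)\partial_t(\rho_{\eps,r,r_0}\rho_{\eps,r,r_0}^\Dx)$ via the Leibniz rule together with both equations (this is the discrete analogue of \eqref{eq:chiProdTimeDerNew}). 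Then I collect terms: the $-\Delta\rho$, $-\Delta\rho^\Dx$ and the mixed Laplacian/discrete-Laplacian contributions produce, after using $2\partial_t(\rho\rho^\Dx)$'s cross-terms, the $\Delta Q_{\eps,r,r_0}$ term in \eqref{eq:T1} plus the crucial cross gradient term $2\nabla\rho_{\eps,r,r_0}\cdot(2\nabla-(\Dp+\Dm))\rho^\Dx_{\eps,r,r_0}$ — exactly the $D_3$-type term of the formal calculation, here carrying the mismatch $(2\nabla-(\Dp+\Dm))$ between continuous and discrete derivatives. The transport terms $g'(\zeta)\cdot\nabla$, $G_1'(\zeta)\cdot\Dp$, $G_2'(\zeta)\cdot\Dm$ assemble (using $G_1'+G_2'=g'$) into the left-hand $\int g'\cdot\nabla Q$ plus the correction terms \eqref{eq:T6} where the derivative mismatches $(\Dp-\nabla)$, $(\Dm-\nabla)$ are isolated, and \eqref{eq:T7} for the Laplacian mismatch $(\Delta-\Dm\cdot\Dp)$; here the identity $\signe{\zeta}-2\chi_\eps(A(u);\zeta)=\signe{\zeta-A(u)}$ from Lemma~\ref{lemma:MollifiedChiFuncProp}(ii) is what converts the coefficient of the $\rho^\Dx$-equation contributions into the $\signe{\zeta-A(u)}\star J_{r_0}\otimes J_r$ weights appearing in \eqref{eq:T3}, \eqref{eq:T5}, \eqref{eq:T6}, \eqref{eq:T7}, \eqref{eq:T8}, while the $\rho$-equation contributions get weight $\signe{\zeta-A(u_\Dx)}\star J_{r_0}\otimes J_r$ in \eqref{eq:T2}, \eqref{eq:T4}. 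The remainder-flux terms $\partial_t R^{B'}$, $\nabla\cdot R^{g'}$, $\partial_t R^{B',\Dx}$, $\Dp\cdot R^{G_1',\Dx}$, $\Dm\cdot R^{G_2',\Dx}$ are carried along linearly and land in \eqref{eq:T2}--\eqref{eq:T5}.

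The $\zeta$-derivative terms $\partial_\zeta n_{A,\eps,r,r_0}$, $\partial_\zeta(n^\Dx_{A,\eps,r,r_0}+n^\Dx_{G,\eps,r,r_0})$ require an integration by parts in $\zeta$, after which the coefficient $\partial_\zeta$ hits $\signe{\zeta}$, producing $2J_\eps(\zeta)$-type factors, or hits $\chi_\eps$, producing $J_\eps(\zeta-A(u))$ or $J_\eps(\zeta-A(u_\Dx))$ factors (by Lemma~\ref{lemma:MollifiedChiFuncProp}(i)). Tracking these carefully: the $\signe{\zeta}$-against-$\partial_\zeta n$ contributions vanish or telescope because $\int J_\eps=1$-type cancellations occur between the $u$- and $u_\Dx$-entropy pieces, exactly as in the formal derivation where $\int D(\zeta)\,d\zeta$ collapses; what survives is the genuinely dissipative combination collected into $E_{\Dx,\eps,r,r_0}$ in \eqref{eq:discretediss} — namely $-\nabla\rho_{\eps,r,r_0}\cdot(\Dp+\Dm)\rho^\Dx_{\eps,r,r_0}$ (the $D_3$-remnant) plus the two cross parabolic-dissipation-measure terms $(J_\eps(\zeta-A(u_\Dx))\star\cdots)n_{A,\eps,r,r_0}$ and $(J_\eps(\zeta-A(u))\star\cdots)n^\Dx_{A,\eps,r,r_0}$ (the $D_1$- and $D_2$-remnants), together with the leftover mollified-$n_G^\Dx$ term in \eqref{eq:T8}. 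I would organize the bookkeeping by first writing $Q_\eps=|\chi_\eps(u;\cdot)|+|\chi_\eps(v;\cdot)|-2\chi_\eps(u;\cdot)\chi_\eps(v;\cdot)$ isn't quite available because of nonlinearity, so instead I keep the $\signe{\zeta}$-form of \eqref{eq:Lepsdef} throughout and only invoke Lemma~\ref{lemma:MollifiedChiFuncProp}(ii) at the precise places where $\signe{\zeta}-2\chi_\eps$ must be recombined.

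The main obstacle will be the disciplined bookkeeping of the cross term from $\partial_t(\rho_{\eps,r,r_0}\rho^\Dx_{\eps,r,r_0})$: the product rule forces one to substitute \emph{both} kinetic equations, each carrying transport, (discrete) Laplacian, remainder-flux, and $\partial_\zeta$-measure terms, and the cancellations that produce the clean right-hand side \eqref{eq:T1}--\eqref{eq:T9} hinge on correctly pairing the coefficient $\signe{\zeta}-2\chi_\eps(A(u_\Dx);\zeta)=\signe{\zeta-A(u_\Dx)}$ with the $\rho$-equation and $\signe{\zeta}-2\chi_\eps(A(u);\zeta)=\signe{\zeta-A(u)}$ with the $\rho^\Dx$-equation, and on recognizing that the doubly mollified spatial structure lets $\nabla$ and $\Dp,\Dm$ pass freely between factors. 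Everything else — the integrations by parts in $\zeta$, the use of $G_1'+G_2'=g'$, and the identification of $E_{\Dx,\eps,r,r_0}$ — is routine once the coefficients are correctly assigned. I do not expect any new estimate to be needed at this stage; the lemma is an exact identity, and all inequalities (in particular the near-sign-definiteness of $E_{\Dx,\eps,r,r_0}$) are deferred to the subsequent sections.
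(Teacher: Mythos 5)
Your proposal is correct and follows essentially the same route as the paper's proof: split $Q_{\eps,r,r_0}$ into the two $\signe{\zeta}$-weighted pieces plus the product term, substitute the mollified kinetic equations of Lemmas~\ref{lemma:KineticFormATransformedMollified} and \ref{lemma:KineticFormulationOfSemiDiscATransMollified} (with Leibniz for the product), recombine coefficients via $\signe{\zeta}-2\chi_\eps(\cdot\,;\zeta)=\signe{\zeta-\cdot}$ from Lemma~\ref{lemma:MollifiedChiFuncProp}(ii), and integrate by parts in $\zeta$ so the measure terms acquire the weights $J_\eps(\zeta-A(u_\Dx))$, $J_\eps(\zeta-A(u))$ entering $E_{\Dx,\eps,r,r_0}$ and \eqref{eq:T8}, with the second-derivative mismatch split giving \eqref{eq:T1} and the first term of \eqref{eq:discretediss}. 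The only loose phrase is that the $\signe{\zeta}$-against-$\partial_\zeta n$ contributions ``vanish or telescope'': they do not vanish on their own, but, as you correctly state elsewhere in the proposal, they combine with the $-2\chi_\eps$ terms produced by the product rule to yield exactly those $J_\eps(\zeta-A(\cdot))$ weights, which is precisely the paper's bookkeeping.
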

\begin{proof}
  By definition,
  \begin{multline*}
    Q_{\eps,r,r_0}(t,x,\zeta) =
    \signe{\zeta}\rho_{\eps,r,r_0}(t,x,\zeta)
    + \signe{\zeta}\rho^\Dx_{\eps,r,r_0}(t,x,\zeta) \\
    -2\rho_{\eps,r,r_0}(t,x,\zeta)\rho^\Dx_{\eps,r,r_0}(t,x,\zeta).
  \end{multline*}
  Hence,
  \begin{align*}
    \partial_t \int_\R Q_{\eps,r,r_0}B'(\zeta)\,d\zeta
    &= \int_\R \signe{\zeta}\partial_t(\rho_{\eps,r,r_0}
    +\rho^\Dx_{\eps, r,r_0})B'(\zeta)\,d\zeta \\
    &\quad + \int_\R \partial_t(\rho_{\eps,r,r_0}
    \rho^\Dx_{\eps,r,r_0})B'(\zeta)\,d\zeta \\
    &=:\term_1 + \term_2.
  \end{align*}
  By Lemmas \ref{lemma:KineticFormATransformedMollified} and
  \ref{lemma:KineticFormulationOfSemiDiscATransMollified}
  \begin{align*}
    \term_1 &= \underbrace{-\int_\R \signe{\zeta}g'(\zeta) \cdot \nabla
      (\rho_{\eps,r,r_0} +
      \rho^\Dx_{\eps,r,r_0})\,d\zeta}_{\term_1^1}
    \\
    &\quad + \underbrace{\int_\R
      \signe{\zeta}\Delta(\rho_{\eps,r,r_0}
      +\rho^\Dx_{\eps,r,r_0})\,d\zeta}_{\term_1^2}
    \\
    &\quad \underbrace{- \int_\R
      \signe{\zeta}\partial_t(R_{\eps,r,r_0}^{B'} +
      R_{\eps,r,r_0}^{B',\Dx})\,d\zeta}_{\term_1^3}
    \\
    &\quad \underbrace{- \int_\R \signe{\zeta}\left(\nabla \cdot
        R_{\eps,r,r_0}^{g'} + \Dp \cdot
        R_{\eps,r,r_0}^{G_1',\Dx} + \Dm \cdot
        R_{\eps,r,r_0}^{G_2',\Dx}\right)\,d\zeta}_{\term_1^4} \\
    &\quad + \underbrace{\int_\R \signe{\zeta}\left(\partial_\zeta
        n_{A,\eps,r,r_0} + \partial_\zeta
        n^\Dx_{A,\eps,r,r_0}\right)\,d\zeta}_{\term_1^5}
    \\
    &\quad \underbrace{- \int_\R \signe{\zeta}\left(G_1'(\zeta) \cdot
        (\Dp-\nabla) + G_2'(\zeta)\cdot
        (\Dm-\nabla)\right)\rho_{\eps,r,r_0}^\Dx\,d\zeta}_{\term_1^6}
    \\
    &\quad + \underbrace{\int_\R \signe{\zeta}(\Delta-\Dm \cdot
      \Dp)\rho^\Dx_{\eps,r,r_0}\,d\zeta}_{\term_1^7}
    \\
    &\quad + \underbrace{\int_\R \signe{\zeta}\partial_\zeta
      n^\Dx_{G,\eps,r,r_0}\,d\zeta}_{\term_1^8}.
  \end{align*}
  Similarly for $\term_2$ we obtain
  \begin{align*}
    \term_2 &= \underbrace{2\int_\R g'(\zeta) \cdot
      \nabla(\rho_{\eps,r,r_0}\rho^\Dx_{\eps,r,r_0})\,d\zeta}_{\term_2^1}
    \\
    &\quad \underbrace{- 2\int_\R \rho_{\eps,r,r_0}\Delta
      \rho^\Dx_{\eps,r,r_0} + \Delta
      \rho_{\eps,r,r_0}\rho^\Dx_{\eps,r,r_0} \,d\zeta}_{\term_2^2}
    \\
    &\quad + \underbrace{2\int_\R \rho_{\eps,r,r_0}\partial_t
      R^{B',\Dx}_{\eps,r,r_0} + \rho^\Dx_{\eps,r,r_0}\partial_t
      R^{B'}_{\eps,r,r_0} \,d\zeta }_{\term_2^3}
    \\
    &\quad + \underbrace{2\int_\R \rho^\Dx_{\eps,r,r_0}\nabla \cdot
      R_{\eps,r,r_0}^{g'} + \rho_{\eps,r,r_0}\Dp \cdot
      R_{\eps,r,r_0}^{G_1',\Dx} + \rho_{\eps,r,r_0}\Dm \cdot
      R_{\eps,r,r_0}^{G_2',\Dx} \, d\zeta}_{\term_2^4}
    \\
    &\quad \underbrace{- 2\int_\R \rho_{\eps,r,r_0} \partial_\zeta
      n^\Dx_{A,\eps,r,r_0} + \rho_{\eps,r,r_0}^\Dx \partial_\zeta
      n_{A,\eps,r,r_0} \,d\zeta}_{\term_2^5}
    \\
    &\quad + \underbrace{2\int_\R \rho_{\eps,r,r_0}\left(G_1'(\zeta)
        \cdot (\Dp - \nabla) + G_2'(\zeta) \cdot (\Dm
        -\nabla)\right)\rho_{\eps,r,r_0}^\Dx \, d\zeta}_{\term_2^6}
    \\
    &\quad \underbrace{- 2\int_\R \rho_{\eps,r,r_0}(\Delta - \Dm \cdot
      \Dp)\rho_{\eps,r,r_0}^\Dx\,d\zeta}_{\term_2^7}
    \\
    &\quad \underbrace{- 2\int_\R \rho_{\eps,r,r_0}\partial_\zeta
      n^\Dx_{G,\eps,r,r_0}\,d\zeta}_{\term_2^8}.
  \end{align*}
  We compute $\term_1 + \term_2$ term by term, and thereby explain each of the
  terms \eqref{eq:T1}--\eqref{eq:T9} in the lemma. We start with
  \begin{equation*}
    \term_1^1 + \term_2^1 = - \int_\R g'(\zeta) \cdot \nabla Q_{\eps,r,r_0}\,d\zeta,
  \end{equation*}
  which gives the last term in \eqref{eq:T0}.

  To make the second derivative terms a complete derivative we need to
  add and subtract. Hence we may write
  \begin{align*}
    \term_1^2 + \term_2^2 &= \Delta \int_\R Q_{\eps,r,r_0} \,d\zeta +
    4\int_\R \nabla \rho_{\eps,r,r_0} \cdot \nabla
    \rho_{\eps,r,r_0}^\Dx\,d\zeta \\
    &= \Delta \int_\R Q_{\eps,r,r_0} \,d\zeta + 2\int_\R \nabla
    \rho_{\eps,r,r_0} \cdot (\Dp +
    \Dm)\rho_{\eps,r,r_0}^\Dx\,d\zeta \\
    &\qquad +2\int_\R \nabla \rho_{\eps,r,r_0}
    (2\nabla -(\Dp +\Dm))\rho_{\eps,r,r_0}^\Dx\,d\zeta,
  \end{align*}
  which explains \eqref{eq:T1} and the first term in $E_{\Dx,\eps,r,r_0}$.

  By Lemma~\ref{lemma:MollifiedChiFuncProp} it follows that
  \begin{align*}
    \signe{\zeta}-2\rho_{\eps,r,r_0} &= \signe{\zeta-A(u)}
    \star J_{r_0} \otimes J_r, \\
    \signe{\zeta}-2\rho^\Dx_{\eps,r,r_0} &=
    \signe{\zeta-A(u_\Dx)} \star J_{r_0} \otimes J_r.
  \end{align*}
  Hence,
  \begin{align*}
    \term_1^3 + \term_2^3 &= - \int_\R (\signe{\zeta-A(u_\Dx)} \star J_{r_0}
    \otimes
    J_r)\partial_tR_{\eps,r,r_0}^{B'}\,d\zeta \\
    & \qquad -\int_\R (\signe{\zeta-A(u)} \star J_{r_0} \otimes
    J_r)\partial_tR^{B',\Dx}_{\eps,r,r_0}\,d\zeta,
  \end{align*}
  which explains \eqref{eq:T2} and \eqref{eq:T3}
  Similarly,
  \begin{align*}
    \term_1^4 + \term_2^4
    &= - \int_\R (\signe{\zeta-A(u_\Dx)} \star J_{r_0} \otimes
    J_r)\nabla \cdot R_{\eps,r,r_0}^{g'}\,d\zeta \\ 
    &\quad - \int_\R (\signe{\zeta-A(u)} \star J_{r_0} \otimes
    J_r)\left(\Dp \cdot R_{\eps,r,r_0}^{G_1',\Dx} + \Dm \cdot
      R_{\eps,r,r_0}^{G_2',\Dx}\right) \, d\zeta,
  \end{align*}
  which explains the presence 
  of \eqref{eq:T4} and \eqref{eq:T5}.

  Performing integration by parts we obtain, using
  Lemma~\ref{lemma:MollifiedChiFuncProp},
  \begin{align*}
    \term_1^5 + \term_2^5
    &=\int_\R (\signe{\zeta-A(u_\Dx)} \star J_{r_0} \otimes
    J_r)\partial_\zeta n_{A,\eps,r,r_0} 
    \\ 
    & \hphantom{=-2\int_\R}\quad+ (\signe{\zeta-A(u)}\star J_{r_0} \otimes
    J_r)\partial_\zeta n^\Dx_{A,\eps,r,r_0} \,d\zeta
    \\ 
    &=-2\int_\R (J_\eps(\zeta-A(u_\Dx)) \star J_{r_0} \otimes J_r)
    n_{A,\eps,r,r_0} \\
    &\hphantom{=-2\int_\R}\quad + (J_\eps(\zeta-A(u))\star J_{r_0} \otimes
    J_r)n^\Dx_{A,\eps,r,r_0} \,d\zeta, 
  \end{align*}
  which explains the two last terms in $E_{\Dx,\eps,r,r_0}$.

  Similarly,
  \begin{align*}
    \term_1^6 + \term_2^6
    &= - \int_\R (\signe{\zeta-A(u)}\star J_{r_0} \otimes
    J_r)\big(G_1'(\zeta) \cdot (\Dp-\nabla) 
    \\
    & \hphantom{= - \int_\R}\quad + G_2'(\zeta)\cdot
    (\Dm-\nabla)\big)\rho_{\eps,r,r_0}^\Dx\,d\zeta,
    \intertext{and}
    \term_1^7 + \term_2^7 &= \int_\R (\signe{\zeta-A(u)} \star J_{r_0} \otimes
    J_r)(\Delta-\Dm \cdot \Dp)\rho^\Dx_{\eps,r,r_0}\,d\zeta,
  \end{align*}
   explaining the terms \eqref{eq:T6} and \eqref{eq:T7}.

  Finally, integration by parts yields
  \begin{equation*}
    \term_1^8 + \term_2^8 = -2\int_\R (J_\eps(\zeta-A(u)) \star J_{r_0} \otimes
    J_r)n^\Dx_{G,\eps,r,r_0}\,d\zeta,  
  \end{equation*}
  which is the term \eqref{eq:T8}.
\end{proof}

\subsection{Dissipative term}
In this subsection we are 
concerned with finding an upper bound on 
\eqref{eq:discretediss}. In the continuous setting, this ``dissipative" term is 
negative, cf.~\eqref{eq:SignOfDissipTerm}, which comes 
as a consequence of the chain rule of calculus. 
The following elementary lemma will help us contend with the lack of 
a discrete chain rule.

\begin{lemma}\label{lemma:DiscreteChainRule2Order}
  Let $a$ and $b$ be two real numbers. 
  Then there exist real numbers 
  $\tau = \tau_\eps(a,b,\zeta)$ and $\theta =
  \theta_\eps(a,b,\zeta)$ such that $\tau$ and $\theta$ 
  are between $a$ and $b$,  and
  \begin{align}
    \int_\R J_\eps(\zeta-\xi)(\chi(b;\xi)-\chi(a;\xi))\,d\xi &=
    J_\eps(\zeta-\theta)(b-a), \label{eq:theta}\\ 
    \int_\R
    J_\eps(\zeta-\xi)(b-\xi)(\chi(b;\xi)-\chi(a;\xi))\,d\xi &=
    \frac{1}{2}J_\eps(\zeta-\tau)(b-a)^2.\label{eq:tau}
  \end{align}
  Furthermore, whenever $a \neq b$:
  \begin{itemize}
   \item[(i)] 
    \begin{align*}
      J_\eps(\zeta-\theta) &= \frac{1}{b-a}\int_a^b J_\eps(\zeta-\xi)\,d\xi;\\ 
      J_\eps(\zeta-\tau) &= \frac{2}{(b-a)^2}\int_a^b J_\eps(\zeta-\xi)(b-\xi)\,d\xi;
    \end{align*}
   \item[(ii)]
    \begin{displaymath}
      (J_\eps(\zeta-\theta)-J_\eps(\zeta-\tau))(b-a) 
      = \frac{1}{b-a} \int_a^b J_\eps(\zeta-\xi)(2\xi-(b+a))\,d\xi;
    \end{displaymath}
   \item[(iii)]
      \begin{displaymath}
       \qquad J_\eps(\zeta-\tau)-J_\eps(\zeta-a) = \frac{2}{(b-a)^2}\Bigl(\int_a^b
        (J_\eps(\zeta-\xi)-J_\eps(\zeta-a))(b-\xi)\,d\xi \Bigr);
      \end{displaymath}
   \item[(iv)]
      \begin{multline*}
	\qquad \, J_\eps(\zeta-\theta_\eps(a,b,\zeta))-J_\eps(\zeta-\tau_\eps(a,b,\zeta)) 
	= \\ -\left(J_\eps(\zeta-\theta_\eps(b,a,\zeta))-
        J_\eps(\zeta-\tau_\eps(b,a,\zeta))\right).
      \end{multline*}
  \end{itemize}
\end{lemma}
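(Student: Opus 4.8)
The plan is to reduce both identities \eqref{eq:theta} and \eqref{eq:tau} to elementary one-variable integrals, apply the intermediate value theorem, and then deduce (i)--(iv) by algebra. The starting observation is the elementary identity
\begin{equation*}
  \int_\R g(\xi)\bigl(\chi(b;\xi)-\chi(a;\xi)\bigr)\,d\xi=\int_a^b g(\xi)\,d\xi,
\end{equation*}
valid for any locally integrable $g$ and all $a,b\in\R$, since $\chi(b;\xi)-\chi(a;\xi)=\car{a<\xi<b}-\car{b<\xi<a}$ up to a null set, irrespective of the signs of $a$ and $b$ (here $\int_a^b$ denotes the oriented integral). Applying this with $g(\xi)=J_\eps(\zeta-\xi)$ and then with $g(\xi)=(b-\xi)J_\eps(\zeta-\xi)$ turns the left-hand sides of \eqref{eq:theta} and \eqref{eq:tau} into $\int_a^b J_\eps(\zeta-\xi)\,d\xi$ and $\int_a^b(b-\xi)J_\eps(\zeta-\xi)\,d\xi$, respectively. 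If $a=b$ both quantities vanish and one may take $\theta=\tau=a$, so from now on assume $a\neq b$.

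Next I would \emph{define} $\theta=\theta_\eps(a,b,\zeta)$ and $\tau=\tau_\eps(a,b,\zeta)$ through the two formulas asserted in (i), the only thing to verify being that these formulas really do produce values of the continuous function $\xi\mapsto J_\eps(\zeta-\xi)$ that are attained between $a$ and $b$. For $\theta$, the quantity $\frac{1}{b-a}\int_a^b J_\eps(\zeta-\xi)\,d\xi$ is the ordinary average of $J_\eps(\zeta-\cdot)$ over the interval with endpoints $a$ and $b$, hence lies between its minimum and maximum there, and the intermediate value theorem provides $\theta$; this is \eqref{eq:theta}. For $\tau$, the key computation is $\int_a^b(b-\xi)\,d\xi=\tfrac12(b-a)^2$, which shows that $\frac{2}{(b-a)^2}\int_a^b(b-\xi)J_\eps(\zeta-\xi)\,d\xi$ is a \emph{weighted} average of $J_\eps(\zeta-\cdot)$ over the same interval, with weight $b-\xi$ of constant sign on that interval; it therefore again lies between the extrema of $J_\eps(\zeta-\cdot)$ and equals $J_\eps(\zeta-\tau)$ for some $\tau$ between $a$ and $b$, which is \eqref{eq:tau}. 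These choices pin down the \emph{values} $J_\eps(\zeta-\theta)$ and $J_\eps(\zeta-\tau)$ even though the points need not be unique, and only those values enter (ii)--(iv).

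The last three items are then purely algebraic consequences of (i). For (ii), substitute the two formulas from (i), combine the integrals over $[a,b]$, and simplify $(b-a)-2(b-\xi)=2\xi-(b+a)$. For (iii), rewrite $J_\eps(\zeta-a)=\frac{2}{(b-a)^2}\int_a^b J_\eps(\zeta-a)(b-\xi)\,d\xi$ using $\int_a^b(b-\xi)\,d\xi=\tfrac12(b-a)^2$ and subtract it from the $\tau$-formula in (i). For (iv), apply (ii) once with the ordered pair $(a,b)$ and once with $(b,a)$; since $\int_b^a=-\int_a^b$ and $a-b=-(b-a)$, both expressions collapse to $\frac{1}{b-a}\int_a^b J_\eps(\zeta-\xi)\bigl(2\xi-(a+b)\bigr)\,d\xi$, and dividing by $b-a\neq0$ gives the stated antisymmetry. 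I expect the only genuinely delicate point anywhere in the argument to be orientation bookkeeping: when $a>b$ one must keep track of the sign of $\int_a^b$ and check that the weight $b-\xi$ stays of one sign on the interval of integration, so that the weighted-average/intermediate-value reasoning for $\tau$ still applies; everything else is routine.
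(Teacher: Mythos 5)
Your proposal is correct and follows essentially the same route as the paper: reduce the $\chi$-integrals to oriented integrals over $[a,b]$, obtain $\theta$ and $\tau$ from the (weighted) integral mean value theorem, and derive (i)--(iv) by the same algebraic manipulations, with (iv) coming from the symmetry of the right-hand side of (ii) in $a$ and $b$. Your explicit remarks on the $a=b$ case, the non-uniqueness of $\theta,\tau$, and the orientation/sign bookkeeping are just careful elaborations of the paper's terser argument.
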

\begin{proof}
  To prove \eqref{eq:tau}, note that
  \begin{equation*}
    \int_\R J_\eps(\zeta-\xi)(b-\xi)(\chi(b;\xi)-\chi(a;\xi))\,d\xi =
    \int_a^b J_\eps(\zeta-\xi)(b-\xi)\,d\xi. 
  \end{equation*}
  By the mean value theorem there exists a $\tau$ between $a$ and $b$
  such that
  \begin{equation*}
    \int_a^b J_\eps(\zeta-\xi)(b-\xi)\,d\xi =
    J_\eps(\zeta-\tau)\int_a^b (b-\xi)\,d\xi. 
  \end{equation*}
  Equation \eqref{eq:theta} follows in a similar way. The proof
  of (i) is immediate. Let us prove (ii). By (i)
  \begin{equation*}
    (J_\eps(\zeta-\theta)-J_\eps(\zeta-\tau))(b-a) 
    = \int_a^b J_\eps(\zeta-\xi)\left(1 - 2\frac{b-\xi}{b-a}\right)\,d\xi. 
  \end{equation*}
  It remains to observe that
  \begin{equation*}
    1 - 2\frac{b-\xi}{b-a} = \frac{2\xi-(b+a)}{b-a}.
  \end{equation*}
  To prove (iii), note that
  \begin{equation*}
    J_\eps(\zeta-a)(b-a)^2 = 2\int_a^b J_\eps(\zeta-a)(b-\xi)\,d\xi.
  \end{equation*}
  Hence (iii) follows by (i). To prove (iv), observe that the
  expression on the right-hand side of (ii) is symmetric in $a$ and $b$.
\end{proof}

The next result can be viewed as a discrete counterpart 
of the the chain rule, enabling us to write the 
nonlinear term $n_A^\Dx$, properly regularized, on a form that resembles a 
parabolic dissipation term like \eqref{eq:pardiss}.

\begin{lemma}\label{Lemma:TauThetaDefAndProperties}
  With the notation of 
  Lemma~\ref{lemma:KineticSemiDiscEquATransformed}, for 
  each $1 \le i \le d$, let
  \begin{equation*}
    \tau_{\Dx,i}^+ = \tau_\eps(A(u_\Dx),S_{\Dx_i} A(u_\Dx),\zeta),
    \quad 
    \tau_{\Dx,i}^- = \tau_\eps(A(u_\Dx),S_{-\Dx_i}A(u_\Dx),\zeta)
  \end{equation*}
  and
  \begin{equation*}
    \theta_{\Dx,i}^+ = \theta_\eps(A(u_\Dx),S_{\Dx_i} A(u_\Dx),\zeta), 
    \quad
    \theta_{\Dx,i}^- = \theta_\eps(A(u_\Dx),S_{-\Dx_i}A(u_\Dx),\zeta),
  \end{equation*}
  where $\tau_\eps,\theta_\eps$ is defined in 
  Lemma~\ref{lemma:DiscreteChainRule2Order}. Then
  \begin{itemize}
  \item[(i)]
    \begin{multline*}
      \qquad \, n_A^\Dx \star J_\eps(t,x,\zeta)
      = \frac{1}{2}\sum_{i=1}^d J_\eps(\zeta-\tau_{\Dx,i}^+)(\Dp^i A(u_\Dx))^2 \\
      + \frac{1}{2}\sum_{i=1}^d
      J_\eps(\zeta-\tau_{\Dx,i}^-)(\Dm^i A(u_\Dx))^2;
    \end{multline*}
  \item[(ii)] for $1 \le i \le d$,
    \begin{align*}
	\Dp^i \chi_\eps(A(u_\Dx);\zeta) &=
        J_\eps(\zeta-\theta_{\Dx,i}^+)\Dp^i(A(u_\Dx)),\\ 
	\Dm^i \chi_\eps(A(u_\Dx);\zeta) &=
        J_\eps(\zeta-\theta_{\Dx,i}^-)\Dm^i(A(u_\Dx)).
      \end{align*}
    \end{itemize}
\end{lemma}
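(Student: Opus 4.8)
The plan is to obtain both identities by unfolding the $\zeta$-convolution with $J_\eps$ and then invoking, termwise in $i$ and separately for the forward and backward pieces, the mean-value representations \eqref{eq:theta}--\eqref{eq:tau} of Lemma~\ref{lemma:DiscreteChainRule2Order}. We may fix $(t,x) \in \Pi_T^{r_0}$; then $a := A(u_\Dx(t,x))$, $b := A(u_\Dx(t,x+\Dx e_i)) = S_{\Dx_i}A(u_\Dx)(t,x)$ and $c := A(u_\Dx(t,x-\Dx e_i)) = S_{-\Dx_i}A(u_\Dx)(t,x)$ are real numbers, with $\Dx\,\Dp^iA(u_\Dx) = b-a$ and $\Dx\,\Dm^iA(u_\Dx) = a-c$. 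We also use that $A$, acting pointwise, commutes with the shift $S_y$, which is what aligns the arguments of $\tau_{\Dx,i}^\pm$, $\theta_{\Dx,i}^\pm$ with $a,b,c$.

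For (i), I would write $(n_A^\Dx\star J_\eps)(t,x,\zeta) = \int_\R n_A^\Dx(t,x,\sigma)J_\eps(\zeta-\sigma)\,d\sigma$ and split according to \eqref{eq:nadxdef}. Using $\Dp^i\chi(A(u_\Dx);\sigma) = \Dx^{-1}(\chi(b;\sigma)-\chi(a;\sigma))$, the $i$-th forward contribution becomes $\Dx^{-2}\int_\R J_\eps(\zeta-\sigma)(b-\sigma)(\chi(b;\sigma)-\chi(a;\sigma))\,d\sigma$, which by \eqref{eq:tau} equals $\tfrac12\Dx^{-2}J_\eps(\zeta-\tau_\eps(a,b,\zeta))(b-a)^2 = \tfrac12 J_\eps(\zeta-\tau_{\Dx,i}^+)(\Dp^iA(u_\Dx))^2$. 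For the backward piece I would use $\Dm^i\chi(A(u_\Dx);\sigma) = \Dx^{-1}(\chi(a;\sigma)-\chi(c;\sigma))$ together with the elementary rewriting $(\sigma-c)(\chi(a;\sigma)-\chi(c;\sigma)) = (c-\sigma)(\chi(c;\sigma)-\chi(a;\sigma))$, so that \eqref{eq:tau} applies with $b$ replaced by $c$ and produces $\tfrac12\Dx^{-2}J_\eps(\zeta-\tau_\eps(a,c,\zeta))(c-a)^2 = \tfrac12 J_\eps(\zeta-\tau_{\Dx,i}^-)(\Dm^iA(u_\Dx))^2$. Summing over $i$ gives (i).

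For (ii), starting from $\chi_\eps(A(u_\Dx);\zeta) = \int_\R\chi(A(u_\Dx);\sigma)J_\eps(\zeta-\sigma)\,d\sigma$ (the definition in Lemma~\ref{lemma:MollifiedChiFuncProp}), I would take the spatial difference in $x$ to get $\Dp^i\chi_\eps(A(u_\Dx);\zeta) = \Dx^{-1}\int_\R J_\eps(\zeta-\sigma)(\chi(b;\sigma)-\chi(a;\sigma))\,d\sigma$, and \eqref{eq:theta} turns this into $\Dx^{-1}J_\eps(\zeta-\theta_\eps(a,b,\zeta))(b-a) = J_\eps(\zeta-\theta_{\Dx,i}^+)\Dp^iA(u_\Dx)$. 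The backward identity is the same computation with the pair $(c,a)$ in the role of the formal $(a,b)$ of \eqref{eq:theta}, the only wrinkle being that \eqref{eq:theta} then yields $\theta_\eps(c,a,\zeta)$ whereas the definition puts $\theta_{\Dx,i}^- = \theta_\eps(a,c,\zeta)$; these give the same value of $J_\eps(\zeta-\cdot)$ because Lemma~\ref{lemma:DiscreteChainRule2Order}(i) writes $J_\eps(\zeta-\theta_\eps(a,b,\zeta)) = (b-a)^{-1}\int_a^b J_\eps(\zeta-\xi)\,d\xi$, which is symmetric under $a\leftrightarrow b$ (and when $a=c$ both sides of the asserted identity vanish).

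I do not anticipate a real obstacle, since the analytic content is already isolated in Lemma~\ref{lemma:DiscreteChainRule2Order}; the proof is essentially substitution plus bookkeeping. The two points demanding a little care are the ordering of the first two arguments of $\tau_\eps,\theta_\eps$ relative to the orientation built into \eqref{eq:theta}--\eqref{eq:tau} — handled by the sign rewriting in (i) and the $a\leftrightarrow b$ symmetry of Lemma~\ref{lemma:DiscreteChainRule2Order}(i) in (ii) — and the $\Dx$-powers, so that $\Dx^{-2}(b-a)^2$ and $\Dx^{-2}(a-c)^2$ are identified with $(\Dp^iA(u_\Dx))^2$ and $(\Dm^iA(u_\Dx))^2$ exactly.
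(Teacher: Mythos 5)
Your proposal is correct and follows essentially the same route as the paper: unfold the $\zeta$-convolution, identify each forward/backward piece of $n_A^\Dx$ (respectively of $\chi_\eps$) with the left-hand sides of \eqref{eq:tau} and \eqref{eq:theta}, and use that the shift commutes with evaluation of $A$. Your extra care about the argument ordering (the sign rewriting for the $\Dm$ piece in (i), and the $a\leftrightarrow b$ symmetry of $J_\eps(\zeta-\theta_\eps(a,b,\zeta))$ in (ii)) is sound and just makes explicit what the paper leaves to the reader.
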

\begin{proof}
  By the definition \eqref{eq:nadxdef} of $n_A^\Dx$, recalling that
  $S_y$ commutes with function evaluation,
  \begin{align*}
    &n_A^\Dx \star J_\eps (t,x,\zeta) \\
    &= \sum_{i=1}^d\frac{1}{\Dx^2}\int_\R J_\eps(\zeta-\xi)(S_{\Dx_i}
    A(u_\Dx)-\xi)(\chi(S_{\Dx_i}
    A(u_\Dx);\xi)-\chi(A(u_\Dx);\xi))\,d\xi 
    \\
    &+ \sum_{i=1}^d\frac{1}{\Dx^2}\int_\R
    J_\eps(\zeta-\xi)(S_{-\Dx_i}A(u_\Dx)-\xi)
    (\chi(S_{-\Dx_i}A(u_\Dx);\xi)-\chi(A(u_\Dx);\xi)) 
    \,d\xi.
  \end{align*}
  Hence (i) follows by Lemma~\ref{lemma:DiscreteChainRule2Order}. 
  To prove (ii) note that by Lemma~\ref{lemma:DiscreteChainRule2Order},
  \begin{align*}
    \Dp^i \int_\R &\chi(A(u_\Dx);\xi)J_\eps(\zeta-\xi)\,d\xi \\
    &= \frac{1}{\Dx} \int_\R J_\eps(\zeta-\xi)
    (\chi(S_{\Dx_i} A(u_\Dx);\xi) - \chi(A(u_\Dx);\xi))\,d\xi \\
    &= J_\eps(\zeta-\theta_{\Dx,i}^+)\Dp^i(A(u_\Dx)).
  \end{align*}
  The same argument applies to $\theta^-_{\Dx,i}$.
\end{proof}

We have now come to the key result of this subsection, 
namely a lower bound on the discrete 
dissipation term \eqref{eq:discretediss}.

\begin{lemma}\label{lemma:squareTermEst}
  Let $E_{\Dx,\eps,r,r_0}$ be defined in
  Lemma~\ref{lemma:ContractionLemma}. Then
  \begin{equation*}
    E_{\Dx,\eps,r,r_0} \ge \sum_{k = 1}^2(R_k^+ + R_k^-) \text{
      everywhere in $(r_0,T-r_0) \times \R^d \times \R$}, 
  \end{equation*}
  for all positive numbers $\Dx$, $\eps$, $r$, and $r_0$, where
  \begin{align*}
    R_1^+(\zeta) &=
    \sum_{i=1}^d((J_\eps(\zeta-\tau^+_{\Dx,i})-J_\eps(\zeta-\theta^+_{\Dx,i}))\Dp^i
    A(u_\Dx) \star J_{r_0}  \otimes J_r)\partial_{x_i}
    \rho_{\eps,r,r_0}, \\ 
    R_1^-(\zeta) &=
    \sum_{i=1}^d((J_\eps(\zeta-\tau^-_{\Dx,i})-J_\eps(\zeta-\theta^-_{\Dx,i}))\Dm^i
    A(u_\Dx)) \star J_{r_0}  \otimes J_r)\partial_{x_i}
    \rho_{\eps,r,r_0}, \\ 
    R_2^+(\zeta) &=
    \frac{1}{2}\sum_{i=1}^d\Bigl[
    \left(\left(J_\eps(\zeta-A(u_\Dx))-J_\eps(\zeta-A(\tau^+_{\Dx,i}))\right)
      \star J_{r_0} \otimes J_r\right)\\  
    &\hphantom{XXXXXXXXXXXXXX} \times 
    \left(J_\eps(\zeta-A(u))(\partial_{x_i}A(u))^2 \star J_{r_0}
      \otimes J_r\right)\Bigr], \\ 
    R_2^-(\zeta) &=
    \frac{1}{2}\sum_{i=1}^d\Bigl[
    \left(\left(J_\eps(\zeta-A(u_\Dx))-J_\eps(\zeta-A(\tau^-_{\Dx,i}))\right)
      \star J_{r_0}  \otimes J_r\right)\\ 
    & \hphantom{XXXXXXXXXXXXXX}\times
    \left(J_\eps(\zeta-A(u))(\partial_{x_i}A(u))^2 \star J_{r_0}
      \otimes J_r\right)\Bigr].
  \end{align*}
\end{lemma}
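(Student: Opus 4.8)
The plan is to decompose $E_{\Dx,\eps,r,r_0}$ into $2d$ directional pieces, one for each coordinate $i$ and each of the two differences $\Dp^i,\Dm^i$, and to identify in every piece an $R_1^{\pm}$-term, an $R_2^{\pm}$-term, and a remainder that is non-negative by the Cauchy--Schwarz and arithmetic--geometric mean inequalities. This is the discrete, mollified counterpart of the completion of the square that turns $\int_\R D(\zeta)\,d\zeta$ into $-2\int_\R\delta(\zeta-A(u))\delta(\zeta-A(v))\abs{\nabla A(u)-\nabla A(v)}^2\,d\zeta$ at the continuous level, cf.\ \eqref{eq:SignOfDissipTerm}.

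First I would rewrite the three constituents of \eqref{eq:discretediss} using the chain-rule substitutes already available. By Lemma~\ref{lemma:MollifiedChiFuncProp}(i) and the chain rule (legitimate since $u$ is smooth under $A'>0$), $\partial_{x_i}\rho_{\eps,r,r_0}=(J_\eps(\zeta-A(u))\partial_{x_i}A(u))\star J_{r_0}\otimes J_r$; by Lemma~\ref{Lemma:TauThetaDefAndProperties}(ii), $\Dp^i\rho_{\eps,r,r_0}^\Dx=(J_\eps(\zeta-\theta_{\Dx,i}^+)\Dp^i A(u_\Dx))\star J_{r_0}\otimes J_r$ and its backward analogue; $n_{A,\eps,r,r_0}=\sum_i(J_\eps(\zeta-A(u))(\partial_{x_i}A(u))^2)\star J_{r_0}\otimes J_r$; and Lemma~\ref{Lemma:TauThetaDefAndProperties}(i) expresses $n_{A,\eps,r,r_0}^\Dx$ through the points $\tau_{\Dx,i}^{\pm}$. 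Distributing $J_\eps(\zeta-A(u_\Dx))\star J_{r_0}\otimes J_r$ over the sum defining $n_{A,\eps,r,r_0}$ and splitting each resulting summand symmetrically, $\tfrac12+\tfrac12$, between the forward and backward directions, one gets $E_{\Dx,\eps,r,r_0}=\sum_{i=1}^d(E_i^++E_i^-)$ with
\begin{align*}
  E_i^+ &= -\partial_{x_i}\rho_{\eps,r,r_0}\cdot\Dp^i\rho_{\eps,r,r_0}^\Dx
  +\tfrac12\bigl(J_\eps(\zeta-A(u_\Dx))\star J_{r_0}\otimes J_r\bigr)\bigl(J_\eps(\zeta-A(u))(\partial_{x_i}A(u))^2\star J_{r_0}\otimes J_r\bigr)\\
  &\qquad+\tfrac12\bigl(J_\eps(\zeta-A(u))\star J_{r_0}\otimes J_r\bigr)\bigl(J_\eps(\zeta-\tau_{\Dx,i}^+)(\Dp^i A(u_\Dx))^2\star J_{r_0}\otimes J_r\bigr),
\end{align*}
and $E_i^-$ the evident backward analogue.

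Next I would extract $R_1^{\pm}$ and $R_2^{\pm}$. In the cross term of $E_i^+$, replacing $\theta_{\Dx,i}^+$ by $\tau_{\Dx,i}^+$ produces the error $-\partial_{x_i}\rho_{\eps,r,r_0}\cdot((J_\eps(\zeta-\theta_{\Dx,i}^+)-J_\eps(\zeta-\tau_{\Dx,i}^+))\Dp^i A(u_\Dx)\star J_{r_0}\otimes J_r)$, which is precisely the $i$-th summand of $R_1^+$; in the second summand of $E_i^+$, replacing $A(u_\Dx)$ by $\tau_{\Dx,i}^+$ inside $J_\eps(\zeta-\cdot)$ peels off the $i$-th summand of $R_2^+$. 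What remains is $M_i^+:=-XY+\tfrac12 a_1b_2+\tfrac12 a_2b_1$, where $X=\partial_{x_i}\rho_{\eps,r,r_0}$, $Y=J_\eps(\zeta-\tau_{\Dx,i}^+)\Dp^i A(u_\Dx)\star J_{r_0}\otimes J_r$, $a_1=J_\eps(\zeta-\tau_{\Dx,i}^+)\star J_{r_0}\otimes J_r$, $a_2=J_\eps(\zeta-A(u))\star J_{r_0}\otimes J_r$, $b_1=J_\eps(\zeta-\tau_{\Dx,i}^+)(\Dp^i A(u_\Dx))^2\star J_{r_0}\otimes J_r$, $b_2=J_\eps(\zeta-A(u))(\partial_{x_i}A(u))^2\star J_{r_0}\otimes J_r$; likewise for $M_i^-$. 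Since $J_{r_0}\otimes J_r$ is a probability density, Cauchy--Schwarz applied to $\sqrt{J_\eps(\zeta-A(u))}$ and $\sqrt{J_\eps(\zeta-A(u))}\,\partial_{x_i}A(u)$ gives $X^2\le a_2b_2$, and similarly $Y^2\le a_1b_1$, pointwise in $(t,x,\zeta)$; as $a_1,a_2,b_1,b_2\ge 0$, AM--GM yields $\tfrac12 a_1b_2+\tfrac12 a_2b_1\ge\sqrt{a_1a_2b_1b_2}\ge\abs{X}\abs{Y}\ge XY$, so $M_i^{\pm}\ge 0$. Summing over $i$ and the two signs, and collecting the $R_1^{\pm}$- and $R_2^{\pm}$-contributions, gives $E_{\Dx,\eps,r,r_0}\ge\sum_{k=1}^2(R_k^++R_k^-)$.

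The step I expect to be the main obstacle is the combined bookkeeping: carrying out the symmetric splitting over $i$ and over forward/backward differences so that, after the substitutions $\theta_{\Dx,i}^{\pm}\to\tau_{\Dx,i}^{\pm}$ in the cross terms and $A(u_\Dx)\to\tau_{\Dx,i}^{\pm}$ in the $n_{A,\eps,r,r_0}$ terms, the leftover pieces coincide with $R_1^{\pm}$ and $R_2^{\pm}$ \emph{exactly}, with the right factors $\tfrac12$ and the mollifications $J_{r_0}\otimes J_r$ in precisely the right slots (the functions of $u$ and of $u_\Dx$ being mollified in separate variables), and only after that does the remaining $M_i^{\pm}$ reveal the Cauchy--Schwarz/AM--GM structure above.
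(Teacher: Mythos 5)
Your proposal is correct, and its skeleton coincides with the paper's: you perform the same add--and--subtract steps (replacing $\theta^\pm_{\Dx,i}$ by $\tau^\pm_{\Dx,i}$ in the cross term, which peels off $R_1^\pm$, and splitting $J_\eps(\zeta-A(u_\Dx))$ against $n_{A,\eps,r,r_0}$, which peels off $R_2^\pm$), and the leftover is exactly the paper's $\term_1^\pm+\term_2^\pm+\term_3^\pm$. The only real difference is the final positivity step. The paper rewrites the product of the two separately mollified factors as a single double convolution $\conv{u,u_\Dx}J_{r_0}\otimes J_r\otimes J_{r_0}\otimes J_r$ and observes that the integrand is the perfect square $\tfrac12 J_\eps(\zeta-A(u))J_\eps(\zeta-\tau^\pm_{\Dx,i})\bigl(\partial_{x_i}A(u)-D^i_\pm A(u_\Dx)\bigr)^2\ge 0$, mirroring the continuous computation behind \eqref{eq:SignOfDissipTerm}. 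You instead keep the single convolutions and prove $-XY+\tfrac12 a_1b_2+\tfrac12 a_2b_1\ge 0$ via Cauchy--Schwarz with respect to the probability kernel $J_{r_0}\otimes J_r$ (giving $X^2\le a_2b_2$, $Y^2\le a_1b_1$) followed by AM--GM; expanding the paper's square in the doubled variables yields exactly your $-XY+\tfrac12 a_1b_2+\tfrac12 a_2b_1$, so the two arguments are equivalent in strength, yours being a pointwise inequality that avoids the doubled-variable notation, the paper's making the link to the formal continuous dissipation argument transparent. One cosmetic point: you tacitly read the $A(\tau^\pm_{\Dx,i})$ appearing in $R_2^\pm$ as $\tau^\pm_{\Dx,i}$ (needed for your $a_1$ to pair with $b_1$ in Cauchy--Schwarz); this is the same reading the paper's own proof requires for its perfect square, since $\tau^\pm_{\Dx,i}$ already lies between values of $A$, so it is an inherited typo rather than a gap in your argument.
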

\begin{proof}
  By Lemma~\ref{Lemma:TauThetaDefAndProperties},
  \begin{align*}
    (J_\eps&(\zeta-A(u)) \star J_{r_0}  \otimes J_r)n^\Dx_{A,\eps,r,r_0} \\
    &= \frac{1}{2}\sum_{i=1}^d(J_\eps(\zeta-A(u)) \star J_{r_0}
    \otimes J_r)(J_\eps(\zeta-\tau_{\Dx,i}^+)(\Dp^i A(u_\Dx))^2 \star
    J_{r_0}  \otimes J_r) \\ 
    & \quad + \frac{1}{2}\sum_{i=1}^d(J_\eps(\zeta-A(u)) \star J_{r_0}
    \otimes J_r)(J_\eps(\zeta-\tau_{\Dx,i}^-)(\Dm^i A(u_\Dx))^2 \star
    J_{r_0}  \otimes J_r) 
    \\
    &=: \term_1^+ + \term_1^-.
  \end{align*}
  Observe that
  \begin{equation*}
    \partial_{x_i} \rho_{\eps,r,r_0} = \partial_{x_i}
    (\chi_\eps(A(u);\zeta) \star J_{r_0} \otimes J_r) 
    = J_\eps(\zeta-A(u))\partial_{x_i} A(u) \star J_{r_0}  \otimes J_r.
  \end{equation*}
  Using Lemma \ref{Lemma:TauThetaDefAndProperties} 
  once more gives
  \begin{align*}
    (\Dp^i + \Dm^i)\rho_{\eps,r,r_0}^\Dx 
    & = (J_\eps(\zeta-\theta^+_{\Dx,i})\Dp^i A(u_\Dx) 
    \star J_{r_0}  \otimes J_r 
    \\ & \qquad 
    + J_\eps(\zeta-\theta^-_{\Dx,i})\Dm^i A(u_\Dx)) \star J_{r_0}
    \otimes J_r. 
  \end{align*}
  Hence,
  \begin{align*}
    \nabla &\rho_{\eps,r,r_0} \cdot (\Dp + \Dm)\rho_{\eps,r,r_0}^\Dx  
    \\ & = \sum_{i=1}^d(J_\eps(\zeta-A(u))\partial_{x_i} A(u) \star J_{r_0}
    \otimes J_r)(J_\eps(\zeta-\theta^+_{\Dx,i})\Dp^i A(u_\Dx) \star
    J_{r_0}  \otimes J_r)
    \\ & \quad + \sum_{i=1}^d(J_\eps(\zeta-A(u))\partial_{x_i} A(u)
    \star J_{r_0} \otimes
    J_r)(J_\eps(\zeta-\theta^-_{\Dx,i})\Dm^i A(u_\Dx)) \star
    J_{r_0} \otimes J_r).
  \end{align*}
  Adding and subtracting we obtain
  \begin{equation*}
    -\nabla \rho_{\eps,r,r_0} \cdot (\Dp + \Dm)\rho_{\eps,r,r_0}^\Dx =
    \term_2^+ + \term_2^- + R_1^+ + R_1^-, 
  \end{equation*}
  where
  \begin{align*}
    \term_2^+ &= -\sum_{i=1}^d(J_\eps(\zeta-A(u))\partial_{x_i} A(u) \star J_{r_0} 
    \otimes J_r)(J_\eps(\zeta-\tau^+_{\Dx,i})\Dp^i A(u_\Dx) \star J_{r_0}  \otimes J_r), \\
    \term_2^- &= -\sum_{i=1}^d(J_\eps(\zeta-A(u))\partial_{x_i}
    A(u) \star J_{r_0} \otimes
    J_r)(J_\eps(\zeta-\tau^-_{\Dx,i})\Dm^i A(u_\Dx)) \star
    J_{r_0} \otimes J_r).
  \end{align*}
  For each $1 \le i \le d$,
  \begin{align*}
    J_\eps(\zeta-A(u_\Dx))
    &= \frac{1}{2}\left(J_\eps(\zeta-A(u_\Dx))-J_\eps(\zeta-A(\tau^+_{\Dx,i}))\right) \\
    &\qquad + \frac{1}{2}\left(J_\eps(\zeta-A(u_\Dx))-J_\eps(\zeta-A(\tau^-_{\Dx,i}))\right) \\
    &\qquad + \frac{1}{2}\left(J_\eps(\zeta-A(\tau^+_{\Dx,i})) 
    + J_\eps(\zeta-A(\tau^-_{\Dx,i}))\right).
  \end{align*}
  It follows that
  \begin{equation*}
    (J_\eps(\zeta-A(u_\Dx)) \star J_{r_0}  \otimes J_r)n_{A,\eps,r,r_0}
    = \term_3^+ + \term_3^- + R_2^+ + R_2^-,
  \end{equation*}
  where
  \begin{align*}
    \term_3^+ &= \frac{1}{2}\sum_{i=1}^d(J_\eps(\zeta-A(\tau^+_{\Dx,i}))
    \star J_{r_0}  \otimes
    J_r)(J_\eps(\zeta-A(u))(\partial_{x_i}A(u))^2 \star J_{r_0}
    \otimes J_r), 
    \\
    \term_3^- &=
    \frac{1}{2}\sum_{i=1}^d(J_\eps(\zeta-A(\tau^-_{\Dx,i}))
    \star J_{r_0} \otimes
    J_r)(J_\eps(\zeta-A(u))(\partial_{x_i}A(u))^2 \star J_{r_0}
    \otimes J_r).
  \end{align*}
  Note that
  \begin{equation*}
    E_{\Dx,\eps,r,r_0} = \sum_{k = 1}^3 (\term_k^+ + \term_k^-) + \sum_{k =
      1}^2 (R_k^+ + R_k^-).  
  \end{equation*}
  Now,
  \begin{align*}
    \sum_{k = 1}^3 \term_k^+
    &= \frac{1}{2}\sum_{i=1}^d(J_\eps(\zeta-A(u)) \star J_{r_0}
    \otimes J_r)((J_\eps(\zeta-\tau_{\Dx,i}^+)(\Dp^i A(u_\Dx))^2 \star
    J_{r_0}  \otimes J_r) 
    \\
    &\quad -\sum_{i=1}^d(J_\eps(\zeta-A(u))\partial_{x_i} A(u) \star
    J_{r_0}  \otimes J_r)(J_\eps(\zeta-\tau^+_{\Dx,i})\Dp^i A(u_\Dx)
    \star J_{r_0}  \otimes J_r) 
    \\
    &\quad +\frac{1}{2}\sum_{i = 1}^d
    (J_\eps(\zeta-A(\tau^+_{\Dx,i})) \star J_{r_0} \otimes
    J_r)(J_\eps(\zeta-A(u))(\partial_{x_i} A(u))^2 \star J_{r_0}
    \otimes J_r)\\
    &= \frac{1}{2}\sum_{i = 1}^d
    J_\eps(\zeta-A(u))J_\eps(\zeta-A(\tau^+_{\Dx,i}))
    \left(\partial_{x_i} A(u)-\Dp^i A(u_\Dx)\right)^2 
    \\
    &\hphantom{XXXXXXXXXXXXXXX}\qquad \qquad\conv{u,u_\Dx}\!\!\! J_{r_0}
    \otimes J_r \otimes J_{r_0}  \otimes J_r 
    \\
    & \ge 0.
  \end{align*}
  The obtained inequality holds for 
  all $(t,x,\zeta) \in (r_0,T-r_0) \times \R^d \times \R$. 
  Similarly,
  \begin{equation*}
    \sum_{i = 1}^3 \term_i^-(t,x,\zeta) \ge 0 \text{ for all $(t,x,\zeta)
      \in (r_0,T-r_0) \times \R^d \times \R$.} 
  \end{equation*}
  This concludes the proof of the lemma.
\end{proof}

\subsection{Bounding error terms}
We are going to estimate a series of 
``unwanted'' terms coming from Lemmas \ref{lemma:ContractionLemma} 
and \ref{lemma:squareTermEst}. To this end, we will need to gather three 
technical lemmas, the first one being a simple application 
of Young's inequality for convolutions.

\begin{lemma}\label{lemma:doubleConvolutionEstimate}
  Let $\psi: \R^2 \rightarrow \R$ be a measurable function, and 
  $u,v:\R^d \rightarrow \R$ be measurable functions satisfying
  \begin{equation*}
    \Bigl|\psi(u(x_1),v(x_2))\Bigr| \le K_1(x_1)K_2(x_2) 
    \qquad (x_1,x_2 \in \R^d),
  \end{equation*}
  for some $K_1 \in L^p(\R^d), 1 \le p \le \infty$, 
  and $K_2 \in L^1(\R^d)$. Then
  \begin{equation*}
    \Bigl\|\psi(u,v) \conv{u,v} f \otimes g\Bigr\|_{L^1(\R^d)} \le
    \norm{K_1}_{L^p(\R^d)}\norm{K_2}_{L^1(\R^d)}
    \norm{f}_{L^q(\R^d)}\norm{g}_{L^1(\R^d)},
  \end{equation*}
  for any $g \in L^1(\R)$ and $f \in L^q(\R)$ 
  where $p^{-1} + q^{-1} = 1$. 
  
  If $\psi \in L^\infty(\R^2)$, then
  \begin{equation}\label{eq:lemma:doubleConvolutionEstimateInftyBound}
    \Bigl\|\psi(u,v) \conv{u,v} f \otimes g\Bigr\|_{L^\infty(\R^d)}
    \le
    \norm{\psi}_{L^\infty(\R^2)}\norm{f}_{L^1(\R^d)}\norm{g}_{L^1(\R^d)}. 
  \end{equation}
\end{lemma}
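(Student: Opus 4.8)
\subsection*{Proof proposal}

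The plan is to reduce the statement to the classical Young inequality for convolutions after a single pointwise domination. Without loss of generality we may assume $K_1,K_2\ge 0$ (replace them by $\abs{K_1},\abs{K_2}$). First I would fix $x\in\R^d$ and insert the hypothesis $\abs{\psi(u(y_1),v(y_2))}\le K_1(y_1)K_2(y_2)$ into the double integral defining $\bigl(\psi(u,v)\conv{u,v} f\otimes g\bigr)(x)$; since the two integration variables $y_1,y_2$ decouple, Tonelli's theorem yields the pointwise bound
\begin{equation*}
  \Bigl|\bigl(\psi(u,v)\conv{u,v} f\otimes g\bigr)(x)\Bigr|
  \le \bigl(K_1*\abs{f}\bigr)(x)\cdot\bigl(K_2*\abs{g}\bigr)(x),
  \qquad \text{a.e. } x\in\R^d.
\end{equation*}

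Next I would take the $L^1(\R^d)$ norm of both sides and split the product on the right via the elementary estimate $\norm{\phi_1\phi_2}_{L^1}\le\norm{\phi_1}_{L^\infty}\norm{\phi_2}_{L^1}$, putting $K_1*\abs f$ in $L^\infty$ and $K_2*\abs g$ in $L^1$. To the first factor I apply Young's inequality in the endpoint form $L^p*L^q\hookrightarrow L^\infty$, which holds precisely because $p^{-1}+q^{-1}=1$, obtaining $\norm{K_1*\abs f}_{L^\infty(\R^d)}\le\norm{K_1}_{L^p(\R^d)}\norm{f}_{L^q(\R^d)}$; to the second factor I apply Young's inequality in the form $L^1*L^1\hookrightarrow L^1$, giving $\norm{K_2*\abs g}_{L^1(\R^d)}\le\norm{K_2}_{L^1(\R^d)}\norm{g}_{L^1(\R^d)}$. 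Multiplying these two estimates produces the claimed bound.

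For the $L^\infty$ estimate \eqref{eq:lemma:doubleConvolutionEstimateInftyBound}, when $\psi\in L^\infty(\R^2)$ I would instead bound the integrand pointwise by $\norm{\psi}_{L^\infty(\R^2)}\abs{f(x-y_1)}\,\abs{g(x-y_2)}$ and integrate in $y_1$ and $y_2$ separately, which gives $\bigl|\bigl(\psi(u,v)\conv{u,v} f\otimes g\bigr)(x)\bigr|\le\norm{\psi}_{L^\infty(\R^2)}\norm{f}_{L^1(\R^d)}\norm{g}_{L^1(\R^d)}$ for every $x$; taking the supremum over $x$ concludes. There is no genuine obstacle here: the lemma is pure bookkeeping, and the only point requiring care is matching the Hölder exponents, i.e.\ choosing the $L^\infty/L^1$ split so that the surviving Young inequality on the pair $(K_1,f)$ is exactly the one with $p^{-1}+q^{-1}=1$.
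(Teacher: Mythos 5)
Your proposal is correct and follows essentially the same route as the paper's proof: pointwise domination of the double integral by $(K_1\star\abs{f})(K_2\star\abs{g})$, then the $L^\infty$/$L^1$ H\"older split followed by the two Young inequalities, and the direct pointwise bound for the $L^\infty$ estimate. No gaps.
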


\begin{proof}
  Observe that
  \begin{align*}
    &\Bigl\|\psi(u,v) \conv{u,v} f \otimes g\Bigl\|_{L^1(\R^d)}\\
    &\hphantom{XX}\le \iiint_{\R^d \times \R^d \times \R^d}
    K_1(y_1)K_2(y_2)\abs{f(x-y_1)}\abs{g(x-y_2)}\,dy_1dy_2dx
    \\ &\hphantom{XX}= \norm{(K_1 \star \abs{f})(K_2 \star
      \abs{g})}_{L^1(\R^d)}.
  \end{align*}
  By H\"older's inequality,
  \begin{equation*}
    \norm{(K_1 \star \abs{f})(K_2 \star \abs{g})}_{L^1(\R^d)} \le
    \norm{K_1 \star \abs{f}}_{L^\infty(\R^d)} \norm{K_2 \star
      \abs{g}}_{L^1(\R^d)}. 
  \end{equation*}
  By Young's inequality for convolutions, 
  $\norm{K_1 \star  \abs{f}}_{L^\infty(\R^d)} \le
  \norm{K_1}_{L^p(\R^d)}\norm{f}_{L^q(\R^d)}$ 
  and $\norm{K_2 \star \abs{g}}_{L^1(\R^d)} \le
  \norm{K_2}_{L^1(\R^d)}\norm{g}_{L^1(\R^d)}$. 
  Equation~\eqref{eq:lemma:doubleConvolutionEstimateInftyBound} 
  follows, since
  \begin{equation*}
    \Bigl|\int_{\R^d}\int_{\R^d}
    \psi(u(y_1),v(y_2))f(x-y_1)g(x-y_2)\,dy_1dy_2\Bigr|
    \le \norm{\psi}_{L^\infty(\R^2)}\norm{f}_{L^1(\R^d)}\norm{g}_{L^1(\R^d)}. 
  \end{equation*}
\end{proof}

The next lemma is at the heart of the matter, permitting us to 
estimate some terms involving convolutions against 
approximate delta functions.

\begin{lemma}\label{lemma:tauthetaaIntDiff}
  For real numbers $a$ and $b$, let $\tau=\tau_\eps(a,b,\zeta)$ and
  $\theta=\theta_\eps(a,b,\zeta)$ be as in
  Lemma~\ref{lemma:DiscreteChainRule2Order}.
  \begin{itemize}
  \item[(i)] For $f \in L^1_\mathrm{loc}(\R)$, define
    \begin{equation*}
      \term^1_\eps(f) = \int_\R(J_\eps(\zeta-\tau)-J_\eps(\zeta-a))f(\zeta)\,d\zeta.
    \end{equation*}
    Then
    \begin{equation*}
      \abs{\term^1_\eps(f)} \le \Bigl|\int_a^b \abs{\partial_\xi (f \star
        J_\eps)(\xi)}\,d\xi\Bigr|. 
    \end{equation*}
  \item[(ii)] For $f \in L^\infty(\R)$, define
    \begin{equation*}
      \term^2_\eps(f) = \int_\R
      (J_\eps(\zeta-\theta)-J_\eps(\zeta-\tau))(b-a)f(\zeta)\,d\zeta. 
    \end{equation*}
    Then
    \begin{equation*}
      \abs{\term^2_\eps(f)} \le \frac{1}{2}\norm{f}_{L^\infty(\R)}\abs{b-a}.
     \end{equation*}
  \item[(iii)] Suppose $\seq{f_\eps}_{\eps > 0} \subset
    W^{1,1}_\mathrm{loc}(\R)$ and assume that there exists 
    $f \in W^{1,1}_\mathrm{loc}(\R)$ such 
    that $f_\eps \rightarrow f$ in $W^{1,1}_\mathrm{loc}(\R)$. Then
    \begin{equation*}
      \Bigl|\lim_{\eps \downarrow 0}\term^1_\eps(f_\eps)\Bigr|
      \le \Bigl|\int_a^b \abs{f'(\xi)}\,d\xi\Bigr|. 
    \end{equation*}
  \end{itemize}
\end{lemma}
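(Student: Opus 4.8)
The plan is to reduce all three estimates to the integral representations recorded in parts (ii)--(iii) of Lemma~\ref{lemma:DiscreteChainRule2Order}, after which Fubini's theorem together with the symmetry $J_\eps(-x)=J_\eps(x)$ turns every expression of the form $\int_\R J_\eps(\zeta-\xi)f(\zeta)\,d\zeta$ into the mollification $(f\star J_\eps)(\xi)$. Throughout one may assume $a\neq b$, since if $a=b$ then $\tau=\theta=a=b$ and both sides of each inequality vanish.

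For part (i) I would substitute the identity of Lemma~\ref{lemma:DiscreteChainRule2Order}(iii) into the definition of $\term^1_\eps(f)$ and interchange the order of integration to arrive at
\[
  \term^1_\eps(f)=\frac{2}{(b-a)^2}\int_a^b\bigl((f\star J_\eps)(\xi)-(f\star J_\eps)(a)\bigr)(b-\xi)\,d\xi .
\]
Since $J_\eps\in C_c^\infty(\R)$ and $f\in L^1_{\mathrm{loc}}(\R)$, the function $f\star J_\eps$ lies in $W^{1,1}_{\mathrm{loc}}(\R)$, so the fundamental theorem of calculus gives $(f\star J_\eps)(\xi)-(f\star J_\eps)(a)=\int_a^\xi\partial_s(f\star J_\eps)(s)\,ds$; for every $\xi$ between $a$ and $b$ this is dominated in modulus by $\bigl|\int_a^b\abs{\partial_s(f\star J_\eps)(s)}\,ds\bigr|$, a bound \emph{independent of} $\xi$. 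Pulling this out of the $\xi$-integral and using $\bigl|\int_a^b(b-\xi)\,d\xi\bigr|=\tfrac12(b-a)^2$ produces exactly the claimed estimate. The one delicate point is that the constant has to come out equal to $1$, which works precisely because the FTC bound is uniform in $\xi$.

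For part (ii) the analogous starting point is Lemma~\ref{lemma:DiscreteChainRule2Order}(ii), which after Fubini yields
\[
  \term^2_\eps(f)=\frac{1}{b-a}\int_a^b(f\star J_\eps)(\xi)\,\bigl(2\xi-(a+b)\bigr)\,d\xi .
\]
I would then bound $\norm{f\star J_\eps}_{L^\infty(\R)}\le\norm{f}_{L^\infty(\R)}$ by Young's inequality for convolutions and compute $\int_a^b\abs{2\xi-(a+b)}\,d\xi=\tfrac12(b-a)^2$ (the integrand changes sign at the midpoint $(a+b)/2$), which at once gives $\abs{\term^2_\eps(f)}\le\tfrac12\norm{f}_{L^\infty(\R)}\abs{b-a}$.

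For part (iii) I would apply part (i) with $f=f_\eps$: since $f_\eps\in W^{1,1}_{\mathrm{loc}}(\R)$ one has $\partial_\xi(f_\eps\star J_\eps)=f_\eps'\star J_\eps$, hence $\abs{\term^1_\eps(f_\eps)}\le\bigl|\int_a^b\abs{(f_\eps'\star J_\eps)(\xi)}\,d\xi\bigr|$, and it remains to pass to the limit inside this integral, i.e.\ to show $f_\eps'\star J_\eps\to f'$ in $L^1$ on a fixed compact neighbourhood of $[a,b]$. Writing $f_\eps'\star J_\eps-f'=(f_\eps'-f')\star J_\eps+(f'\star J_\eps-f')$, the first term is controlled by $\norm{f_\eps'-f'}_{L^1}$ over a slightly enlarged compact set (using $\norm{J_\eps}_{L^1}=1$), which tends to $0$ by the assumed $W^{1,1}_{\mathrm{loc}}$-convergence, and the second is the standard mollifier error. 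Since $\bigl|\abs{g}-\abs{h}\bigr|\le\abs{g-h}$, the functional $g\mapsto\int_a^b\abs{g}$ is $L^1$-continuous, so $\int_a^b\abs{(f_\eps'\star J_\eps)(\xi)}\,d\xi\to\int_a^b\abs{f'(\xi)}\,d\xi$, and passing to the limit (or to the $\limsup$, if one prefers not to presuppose that the limit exists) gives the claim. The main obstacle throughout is purely bookkeeping: getting the constant in (i) to be exactly $1$ via the uniform-in-$\xi$ bound, and in (iii) keeping the two roles of $\eps$ — the regularization parameter of $f_\eps$ and the mollifier width — cleanly separated, which the single triangle-inequality split above handles.
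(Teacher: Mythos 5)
Your argument is correct and matches the paper's proof in all essentials: both parts reduce to the representations of Lemma~\ref{lemma:DiscreteChainRule2Order} combined with Fubini's theorem, part (ii) is handled identically, and your triangle-inequality proof that $f_\eps'\star J_\eps\to f'$ in $L^1_{\mathrm{loc}}$ is exactly the paper's argument for (iii). The only cosmetic difference is in (i)/(iii): the paper integrates by parts to get $\term^1_\eps(f)=\int_a^b \partial_\xi(f\star J_\eps)(\xi)\,\tfrac{(b-\xi)^2}{(b-a)^2}\,d\xi$ and then passes to the limit inside this weighted integral (which in particular shows the limit in (iii) exists), whereas you use the fundamental theorem of calculus with a uniform-in-$\xi$ bound and then argue via the $\limsup$; both yield the constant $1$, and your caveat about presupposing existence of the limit is consistent with how the statement is phrased.
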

\begin{proof}
  Assume that $a < b$. By Lemma~\ref{lemma:DiscreteChainRule2Order}
  and Fubini's Theorem,
  \begin{equation*}
    \term^1_\eps(f) = \frac{2}{(b-a)^2} \int_a^b \left((f \star
      J_\eps)(\xi)-(f \star J_\eps)(a)\right)(b-\xi)\,d\xi. 
  \end{equation*}
  Since $\partial_\xi(b-\xi)^2 = -2(b-\xi)$, integration by parts yields
  \begin{equation}\label{eq:T1Representation}
    \term^1_\eps(f) = \int_a^b \partial_\xi(f \star
    J_\eps)(\xi)\frac{(b-\xi)^2}{(b-a)^2} \,d\xi. 
  \end{equation}
  Then statement (i) follows, since
  \begin{equation}\label{eq:lemma:tauthetaaIntDiffEstOnIntegrand}
    \frac{(b-\xi)^2}{(b-a)^2} \le 1, \text{ whenever $a \le \xi \le b$.}
  \end{equation}
  
  By Lemma~\ref{lemma:DiscreteChainRule2Order},
  \begin{equation*}
    \term^2_\eps(f) = \frac{1}{b-a}\int_a^b (f \star J_\eps)(\xi)(2\xi-(a+b))\,d\xi.
  \end{equation*}
  As $\norm{f \star J_\eps}_{L^\infty(\R)} \le
  \norm{f}_{L^\infty(\R)}$, we may conclude that
  \begin{equation*}
    \term^2_\eps(f) \le \frac{1}{b-a}\int_a^b
    \norm{f}_{L^\infty(\R)}\abs{(2\xi-(a+b))}\,d\xi. 
  \end{equation*}
  This implies statement (ii), because 
  \begin{equation*}
    \int_a^b \abs{2\xi-(a+b)}\,d\xi = \frac{1}{2}(b-a)^2.
  \end{equation*}
  
  Finally, we establish statement  (iii). By the triangle inequality 
  and Young's inequality for convolutions,
  \begin{displaymath}
   \norm{f_\varepsilon' \star J_\varepsilon - f'}_{L^1(V)} 
   \leq \norm{f_\varepsilon' - f'}_{L^1(V)} + \norm{f' \star J_\varepsilon - f'}_{L^1(V)} 
  \end{displaymath}
  for any compact $V \subset \R$. Hence $(f_\eps' \star J_\eps) \rightarrow
  f'$ in $L^1_\mathrm{loc}(\R)$ as $\varepsilon \downarrow 0$. By \eqref{eq:T1Representation} and \eqref{eq:lemma:tauthetaaIntDiffEstOnIntegrand} it follows that 
  \begin{equation*}
    \lim_{\eps \downarrow 0} \term^1_\eps(f_\eps) = 
    \lim_{\eps \downarrow 0}\int_a^b (f_\eps' \star J_\eps)(\xi)\frac{(b-\xi)^2}{(b-a)^2} \,d\xi 
    = \int_a^b f'(\xi)\frac{(b-\xi)^2}{(b-a)^2} \,d\xi.
  \end{equation*} 
  The estimate follows thanks to 
  \eqref{eq:lemma:tauthetaaIntDiffEstOnIntegrand}.
\end{proof}

We need one more lemma bounding some specific convolution integrals.

\begin{lemma}\label{lemma:MollifiedChiProd}
  Suppose $f \in C(\R)$ and let $R^f_\eps:\R^2 \rightarrow \R$ be
  defined by \eqref{eq:RfDefinition}. Then
  \begin{equation}\label{eq:MollificationCommutationErrTerm}
    R_\eps^f(u,\zeta) = \int_0^u (f(\sigma)-f(\zeta))J_\eps(\sigma-\zeta)\,d\sigma,
  \end{equation}
  for all $u,\zeta \in \R$. Furthermore, if $f$ is 
  Lipschitz continuous, then
    \begin{equation}\label{eq:mollidiedchiest}
      \int_\R \abs{R_\eps^f(u,\zeta)}\,d\zeta \le \eps \norm{f}_\Lip \abs{u}.
    \end{equation}
  Suppose $A' \ge \eta >0$ and let $g$ be defined by $g \circ A = f$. 
  For $a,b \in \R$, let
    \begin{equation*}
      Z(a,b) = \int_\R \signe{\zeta-a}R^{g'}_{\eps}(b;\zeta)\,d\zeta.
    \end{equation*}
    Then
    \begin{equation}\label{eq:IntByPartsBoundOnRGDer}
      \abs{Z(a,b)} \le 4\norm{f}_\Lip\frac{\eps}{\eta}.
    \end{equation}
\end{lemma}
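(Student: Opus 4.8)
\smallskip
\noindent\emph{Proof strategy.} The plan is to treat the three claims in order, with the last one as the only delicate point.

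For \eqref{eq:MollificationCommutationErrTerm}, I would insert the definition $\chi_\eps(u;\zeta)=\int_\R\chi(u;\sigma)J_\eps(\zeta-\sigma)\,d\sigma$ into the defining relation \eqref{eq:RfDefinition}, which gives at once
\begin{equation*}
  R^f_\eps(u,\zeta) = \int_\R (f(\sigma)-f(\zeta))\chi(u;\sigma)J_\eps(\zeta-\sigma)\,d\sigma .
\end{equation*}
Since $\int_\R h(\sigma)\chi(u;\sigma)\,d\sigma=\int_0^u h(\sigma)\,d\sigma$ and $J_\eps$ is symmetric, this is exactly \eqref{eq:MollificationCommutationErrTerm}. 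For \eqref{eq:mollidiedchiest} I would then bound the integrand by $\abs{f(\sigma)-f(\zeta)}\le\norm{f}_\Lip\abs{\sigma-\zeta}\le\eps\norm{f}_\Lip$ on the support of $\sigma\mapsto J_\eps(\sigma-\zeta)$, so that $\abs{R^f_\eps(u,\zeta)}\le\eps\norm{f}_\Lip\abs{\int_0^u J_\eps(\sigma-\zeta)\,d\sigma}$; integrating in $\zeta$, interchanging the order of integration (Tonelli) and using $\int_\R J_\eps=1$ yields \eqref{eq:mollidiedchiest}.

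The estimate \eqref{eq:IntByPartsBoundOnRGDer} is the heart of the lemma. A direct bound via $\abs{\signe{\zeta-a}}\le1$ and \eqref{eq:mollidiedchiest} fails twice over: $g'$ need not be Lipschitz, and the resulting constant would scale with $\abs{b}$. The idea is to trade a derivative onto $\signe{\zeta-a}$ and exploit its oscillation. First, integrating by parts in the $\sigma$-variable of \eqref{eq:MollificationCommutationErrTerm} (applied once with $f$ replaced by $g'$ and once with $f$ replaced by $g$; note $g\in C^1$ since $A'>0$) produces the identity
\begin{equation*}
  R^{g'}_\eps(b,\zeta) = \partial_\zeta R^g_\eps(b,\zeta) + (g(b)-g(\zeta))J_\eps(\zeta-b) - (g(0)-g(\zeta))J_\eps(\zeta),
\end{equation*}
valid for all $b,\zeta\in\R$, where $R^g_\eps$ denotes $R^f_\eps$ with $f=g$. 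Substituting this into $Z(a,b)$ and integrating by parts in $\zeta$, using $\partial_\zeta\signe{\zeta-a}=2J_\eps(\zeta-a)$ and the fact that $R^g_\eps(b,\cdot)$ is compactly supported (so the boundary terms drop), gives
\begin{align*}
  Z(a,b) &= -2\int_\R J_\eps(\zeta-a)R^g_\eps(b,\zeta)\,d\zeta \\
  &\quad + \int_\R\signe{\zeta-a}(g(b)-g(\zeta))J_\eps(\zeta-b)\,d\zeta - \int_\R\signe{\zeta-a}(g(0)-g(\zeta))J_\eps(\zeta)\,d\zeta .
\end{align*}

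Each of the three integrals is then controlled by $\norm{g'}_{L^\infty(\R)}\,\eps$: for the first, $\abs{R^g_\eps(b,\zeta)}\le\norm{g'}_{L^\infty(\R)}\eps$ by the reasoning behind \eqref{eq:mollidiedchiest} (now $\abs{\int_0^b J_\eps(\sigma-\zeta)\,d\sigma}\le1$ plays the role of $\abs{u}$), together with $\int_\R J_\eps(\zeta-a)\,d\zeta=1$; for the remaining two, one uses $\abs{\signe{\cdot}}\le1$, the bound $\abs{g(b)-g(\zeta)}\le\norm{g'}_{L^\infty(\R)}\eps$ (resp.\ $\abs{g(0)-g(\zeta)}\le\norm{g'}_{L^\infty(\R)}\eps$) valid on the support of $J_\eps(\zeta-b)$ (resp.\ $J_\eps(\zeta)$), and $\int_\R J_\eps=1$. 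This gives $\abs{Z(a,b)}\le4\norm{g'}_{L^\infty(\R)}\eps$. Finally, differentiating $g\circ A=f$ gives $g'(A(u))A'(u)=f'(u)$, and since $A'\ge\eta>0$ forces $A$ to be a bijection of $\R$, we get $\norm{g'}_{L^\infty(\R)}\le\norm{f'}_{L^\infty(\R)}/\eta=\norm{f}_\Lip/\eta$, whence \eqref{eq:IntByPartsBoundOnRGDer}. The main obstacle throughout is precisely this rewriting: one must move a derivative onto $\signe{\zeta-a}$ before estimating, and then keep careful track of the two sets of boundary terms generated by the integrations by parts in $\sigma$ and in $\zeta$.
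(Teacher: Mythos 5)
Your proof is correct, and while \eqref{eq:MollificationCommutationErrTerm} and \eqref{eq:mollidiedchiest} are handled essentially as in the paper, your argument for the key bound \eqref{eq:IntByPartsBoundOnRGDer} takes a genuinely different route. The paper introduces the antiderivative $H^{g'}_\eps(b;\zeta)=\int_0^\zeta R^{g'}_{\eps}(b;\sigma)\,d\sigma$, integrates by parts to write $Z(a,b)=-(H^{g'}_{\eps}(b;\cdot)\star J_\eps)(a)$, and then, after a symmetrization of the resulting double integral, represents $H^{g'}_\eps(b;a)=\int_\R g'(\omega)\bigl(\chi(b;\omega)\chi_\eps(a;\omega)-\chi(a;\omega)\chi_\eps(b;\omega)\bigr)\,d\omega$ and bounds it by $2\norm{g'}_\infty\eps$ via a case analysis of the support of this ``commutator''. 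You instead integrate by parts in the $\sigma$-variable of \eqref{eq:MollificationCommutationErrTerm}, which (using $\partial_\zeta J_\eps(\sigma-\zeta)=-\partial_\sigma J_\eps(\sigma-\zeta)$ and the symmetry of $J_\eps$) does give the identity $R^{g'}_\eps(b,\zeta)=\partial_\zeta R^g_\eps(b,\zeta)+(g(b)-g(\zeta))J_\eps(\zeta-b)-(g(0)-g(\zeta))J_\eps(\zeta)$, and then move the $\zeta$-derivative onto $\signe{\zeta-a}$; since $R^g_\eps(b,\cdot)$ vanishes outside the $\eps$-neighbourhood of the interval between $0$ and $b$, the boundary terms do drop, and your three integrals are bounded by $2\norm{g'}_\infty\eps$, $\norm{g'}_\infty\eps$ and $\norm{g'}_\infty\eps$ respectively, giving $4\norm{g'}_\infty\eps\le 4\norm{f}_\Lip\eps/\eta$, exactly the stated constant (the requisite facts that $g\in C^1$ and $\norm{g'}_\infty\le\norm{f}_\Lip/\eta$ follow from $A'\ge\eta>0$ as you say). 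What your route buys is elementariness: no support analysis of the $\chi$/$\chi_\eps$ expression is needed, and the compact support of $R^g_\eps(b,\cdot)$ makes the integration by parts in $\zeta$ entirely unproblematic, whereas the paper's antiderivative $H^{g'}_\eps(b;\cdot)$ is only bounded, not compactly supported. What the paper's route buys is an explicit closed form for $H^{g'}_\eps$ and the marginally sharper constant $2$ in place of $4$, both of course within the stated bound.
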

\begin{proof}
  Observe that
  \begin{align*}
    & \int_\R f(\sigma)\chi(u;\sigma)J_\eps(\zeta-\sigma)\,d\sigma 
    \\ & \qquad 
    = \underbrace{\int_\R
    (f(\sigma)-f(\zeta))\chi(u;\sigma)
    J_\eps(\zeta-\sigma)\,d\sigma}_{R^f_\eps(u,\zeta)} 
    + f(\zeta)\chi_\eps(u;\zeta).
  \end{align*}
  Let $q'(\sigma) =(f(\sigma)-f(\zeta))J_\eps(\zeta-\sigma)$. 
  Equation~\eqref{eq:MollificationCommutationErrTerm} follows, since
  \begin{equation*}
    \int_\R
    (f(\sigma)-f(\zeta))\chi(u;\sigma)J_\eps(\zeta-\sigma)\,d\sigma =
    q(u)-q(0) = \int_0^u q'(\sigma)\,d\sigma. 
  \end{equation*}
  
  To prove \eqref{eq:mollidiedchiest} observe that
  \begin{align*}
    \int_\R \abs{R^f_\eps(u,\zeta)}\,d\zeta &\le \int_\R 
    \abs{\chi(u;\sigma)}\left(\int_\R
      \abs{f(\sigma)-f(\zeta)}J_\eps(\zeta-\sigma)\,d\zeta  
    \right) d\sigma.
  \end{align*}
  The result follows as
  \begin{equation*}
    \int_\R \abs{f(\sigma)-f(\zeta)}J_\eps(\zeta-\sigma)\,d\sigma \le
    \norm{f}_\Lip\eps. 
  \end{equation*}
   
  Let us prove \eqref{eq:IntByPartsBoundOnRGDer}. Take
  \begin{equation*}
    H^{g'}_\eps(b;\zeta) = \int_0^\zeta R^{g'}_{\eps}(b;\sigma)\,d\sigma.
  \end{equation*}
  Integration by parts yields
  \begin{equation*}
    Z(a,b) = -2\int_\R J_\eps(\zeta-a)H^{g'}_{\eps}(b;\zeta)\,d\zeta 
    = -(H^{g'}_{\eps}(b;\cdot) \star J_\eps)(a).
  \end{equation*}
  By \eqref{eq:MollificationCommutationErrTerm},
  \begin{equation*}
    H^{g'}_\eps(b;a) = \int_0^a\int_0^b(g'(\omega)-g'(\sigma))
    J_\eps(\omega -\sigma)\,d\omega \,d\sigma.
  \end{equation*}
  Due to the symmetry of $J_\eps$,
  \begin{align*}
    H^{g'}_\eps(b;a) & 
    = \int_0^a\int_0^b g'(\omega)J_\eps(\omega
    -\sigma)\,d\omega \,d\sigma 
    \\ &\qquad- \int_0^a\int_0^b g'(\sigma)J_\eps(\omega -\sigma)\,d\omega
    \,d\sigma 
    \\ & =\int_0^a\int_0^b g'(\omega)J_\eps(\omega -\sigma)\,d\omega
    \,d\sigma 
    \\
    &\qquad -\int_0^b\int_0^a g'(\omega)J_\eps(\omega -\sigma)\,d\omega
    \,d\sigma 
    \\
    &=\int_\R g'(\omega)\chi(b;\omega)\Bigl(\int_0^a J_\eps(\omega
      -\sigma)\,d\sigma\Bigr)\,d\omega 
    \\
    &\qquad - \int_\R g'(\omega)\chi(a;\omega)\Bigl(\int_0^bJ_\eps(\omega
      -\sigma)\,d\sigma \Bigr)\,d\omega.
  \end{align*}
  Note that
  \begin{equation*}
    \int_0^a J_\eps(\omega -\sigma)\,d\sigma = \int_\R
    \chi(a;\sigma)J_\eps(\omega-\sigma)\,d\sigma 
    = \chi_\eps(a;\omega). 
  \end{equation*}
  Hence,
  \begin{equation*}
    H^{g'}_\eps(b;a) = \int_\R
    g'(\omega)\left(\chi(b;\omega)\chi_\eps(a;\omega)-\chi(a;\omega)
      \chi_\eps(b;\omega)\right)\,d\omega. 
  \end{equation*}
 Set
  \begin{equation*}
    \lambda(a,b;\omega) :=
    \chi(b;\omega)\chi_\eps(a;\omega)-\chi(a;\omega)
    \chi_\eps(b;\omega).
  \end{equation*}
  To find the support of $\lambda(a,b;\omega)$ 
  we first observe that
  $\lambda(a,b;\omega) 
  = -\lambda(b,a;\omega)$. 
  This reduces the situation to the following cases:
  \begin{equation*}
    \begin{cases}
      0 \le a \le b: & \abs{\lambda(a,b;\omega)} \le
      \car{\abs{a-\omega} \le \eps},
      \\
      b \le a \le 0: & \abs{\lambda(a,b;\omega)} \le
      \car{\abs{a-\omega} \le \eps},
      \\
      a \le 0 \le b: & \abs{\lambda(a,b;\omega)} \le \car{\abs{\omega}
        \le \eps}.
    \end{cases}
  \end{equation*}
  It thus follows that
  \begin{equation*}
    \abs{H^{g'}_{\eps}(b,a)} \le 2\norm{g'}_{\infty}\eps.
  \end{equation*}
  Statement \eqref{eq:IntByPartsBoundOnRGDer} follows as 
  $g'(A(z))A'(z) = f'(z)$, which implies
  $\norm{g'}_{\infty} \le \norm{f}_\Lip\eta^{-1}$.
\end{proof}

We have now the tools needed to start estimating 
the error terms in Lemmas \ref{lemma:ContractionLemma} 
and \ref{lemma:squareTermEst}, starting with those 
in Lemma \ref{lemma:squareTermEst}.

\begin{estimate} \label{est:1}
Let $R_1^\pm$ be defined in Lemma~\ref{lemma:squareTermEst}. 
Then there exists a constant $C = C(d,J)$ such that 
  \begin{equation*}
    \norm{\int_\R R_1^+(\zeta) + R_1^-(\zeta) \,d\zeta}_{L^1(\Pi_T^{r_0})} 
    \le C\frac{\Dx}{r^2}\left(1 + \frac{\Dx}{r}\right)\norm{\Dm A(u_\Dx)}_{L^1(\Pi_T;\R^d)}.
  \end{equation*}
\end{estimate}
\begin{proof}
  Let us first make an observation regarding the similarity of these terms. 
  By statement (iv) of Lemma~\ref{lemma:DiscreteChainRule2Order},
  recalling also the definition of 
  $\theta^{\pm}_{\Dx,i}$ and $\tau^{\pm}_{\Dx,i}$ in
  Lemma~\ref{Lemma:TauThetaDefAndProperties},
   \begin{align*}
    &S_{\Dx_i}\left(J_\eps(\zeta-\tau^-_{\Dx,i})
    -J_\eps(\zeta-\theta^-_{\Dx,i})\right)
    \\ & \quad =
    J_\eps(\zeta-\tau_\eps(S_{\Dx_i}A(u_\Dx),A(u_\Dx),\zeta))
    -J_\eps(\zeta-\theta_\eps(S_{\Dx_i}A(u_\Dx),A(u_\Dx),\zeta)) 
    \\
    & \quad =
    -\left(J_\eps(\zeta-\tau^+_{\Dx,i})-J_\eps(\zeta-\theta^+_{\Dx,i})\right).
  \end{align*}
  Recalling that $\rho_{\eps,r,r_0} =
  \chi_{\eps}(A(u);\cdot) \star J_{r_0} \otimes J_r$, which implies
  \begin{align*}
    & R_1^+(\zeta) + R_1^-(\zeta) \\
    & \quad = -\sum_{i=1}^d S_{\Dx_i}\left[\left(J_\eps(\zeta-\tau^-_{\Dx,i})-
        J_\eps(\zeta-\theta^-_{\Dx,i})\right)D^i_-(u_\Dx)\star (J_{r_0}\otimes J_r)\right]
        \partial_{x_i} \rho_{\eps,r,r_0} \\
    &\, \qquad 
    + \sum_{i=1}^d \left[\left(J_\eps(\zeta-\tau^-_{\Dx,i})-
     J_\eps(\zeta-\theta^-_{\Dx,i})\right)D_-^i(u_\Dx)\star (J_{r_0}\otimes J_r)\right]
        \partial_{x_i} \rho_{\eps,r,r_0} 
    \\ & \quad =-\sum_{i=1}^d(S_{\Dx_i}-1)
    \partial_{x_i} \rho_{\eps,r,r_0}
    \\ & \quad \qquad\qquad\qquad \times \left[\left(J_\eps(\zeta-\tau^-_{\Dx,i})
    - J_\eps(\zeta-\theta^-_{\Dx,i})\right)D_-^i(u_\Dx)\star(J_{r_0}\otimes J_r)\right] 
    \\ & \quad =-\Dx\sum_{i=1}^d
    \left(J_\eps(\zeta-\tau^-_{\Dx,i})-J_\eps(\zeta-\theta^-_{\Dx,i}))
    \Dm^i A(u_\Dx)\right) \chi_{\eps}(A(u);\zeta) 
    \\ &\hphantom{=-\Dx\sum_{i=1}^d}\qquad 
    \conv{u,u_\Dx}(J_{r_0} 
    \otimes \partial_{x_i}J_r) \otimes (J_{r_0} \otimes \Dp^iJ_r).
  \end{align*}
  By statement (ii) of Lemma~\ref{lemma:tauthetaaIntDiff},
  \begin{multline}\label{eq:estBytauthetaaIntDiffii}
    \Bigl|\int_\R R_1^+(\zeta) + R_1^-(\zeta)\,d\zeta\Bigr| \\
    \le \frac{\Dx}{2} \sum_{i = 1}^d
    \norm{\chi_{\eps}(A(u);\cdot)}_{L^\infty(\R)}\abs{\Dm^i A(u_\Dx)}
    \!\!\conv{u,u_\Dx}\!\! 
    \abs{J_{r_0} \otimes \partial_{x_i}J_r 
    \otimes J_{r_0} \otimes \Dp^iJ_r}.
  \end{multline}
  By Lemma~\ref{lemma:doubleConvolutionEstimate},
  \begin{multline*}
    \Bigl\|\int_\R R_1^+(\zeta) + R_1^-(\zeta)\,d\zeta\Bigr\|_{L^1(\Pi_T^{r_0})}
    \le \frac{\Dx}{2}\sum_{i=1}^d \norm{\Dm^i A(u_\Dx)}_{L^1(\Pi_T)} \\
    \times
    \norm{\partial_{x_i}J_r}_{L^1(\R^d)}\norm{\Dp^iJ_r}_{L^1(\R^d)}.
  \end{multline*}
  Recall that $\norm{\partial_{x_i}J_r}_{L^1(\R^d)} \le
  2\norm{J'}_\infty r^{-1}$. Note that
  \begin{align*}
    \abs{\Dp^iJ_r(x)} &= \frac{1}{\Dx}\abs{J_r(x_i + \Dx)-J_r(x_i)}
    \prod_{j \neq i} J_r(x_j)
    \\
    &\le \frac{1}{r^2}\norm{J'}_\infty\car{\abs{x_i} \le r + \Dx}
    \prod_{j \neq i} J_r(x_j).
  \end{align*}
  Hence
  \begin{equation}\label{eq:FiniteDiffofMollifierEst} 
    \norm{\Dp^iJ_r}_{L^1(\R^d)} 
    \le \frac{1}{r^2}\norm{J'}_\infty \int_\R \car{\abs{x_i} \le r + \Dx} \,dx_i
    = 2\norm{J'}_\infty\frac{1}{r}\left(1 + \frac{\Dx}{r}\right)
  \end{equation}
  The estimate follows from \eqref{eq:FiniteDiffofMollifierEst} and
  \eqref{eq:estBytauthetaaIntDiffii}. 
\end{proof}

\begin{estimate} \label{est:2}
  Let $R_2^\pm$ be defined in Lemma~\ref{lemma:squareTermEst}. 
  Then there exists a constant $C = C(d,J)$ such that 
  \begin{multline*}
    \Bigl\|\int_\R R_2^+(\zeta) +
    R_2^-(\zeta)\,d\zeta\Bigr\|_{L^1(\Pi_T^{r_0})} \\
    \le 
    C\frac{\Dx}{\eps^2\sqrt{r_0
        r^d}}\norm{\Dp A(u_\Dx)}_{L^2(\Pi_T;\R^d)}
    \norm{\nabla A(u)}_{L^2(\Pi_T;\R^d)}^2.
  \end{multline*}
\end{estimate}
\begin{proof}
  Let us consider $R_2^+$. The term $R_2^-$ is treated the same way. By
  Lemma~\ref{lemma:tauthetaaIntDiff},
  \begin{align*}
    \Bigl|\int_\R R_2^+(\zeta)\,d\zeta\Bigr|
    &\le \frac{1}{2}\sum_{i=1}^d \Bigl|\int_\R
    \left(J_\eps(\zeta-A(u_\Dx))-J_\eps(\zeta-A(\tau_{\Dx,i}^+))\right)
    J_\eps(\zeta-A(u))\,d\zeta\Bigr| 
    \\
    & \hphantom{XXXXXXXXXXXX}
    \times (\partial_{x_i}A(u))^2 \conv{u,u_\Dx}(J_{r_0} \otimes J_r)
    \otimes (J_{r_0} \otimes J_r)
    \\
    &\le \frac{1}{2}\sum_{i=1}^d
    \Bigl|\int_{A(u_\Dx)}^{S_{\Dx_i}A(u_\Dx)}\abs{\partial_\xi
      (J_\eps(\cdot-A(u))\star J_\eps(\xi))}\,d\xi\Bigr|
    \\
    & \hphantom{XXXXXXXXXXXX}\times (\partial_{x_i}A(u))^2
    \conv{u,u_\Dx}(J_{r_0} \otimes J_r) \otimes (J_{r_0} \otimes J_r).
  \end{align*}
  By Young's inequality for convolutions,
  \begin{equation*}
    \norm{J_\eps(\cdot-A(u))\star J_\eps'}_{L^\infty(\R)} \le
    \norm{J_\eps(\cdot-A(u))}_{L^\infty(\R)}\norm{J_\eps'}_{L^1(\R)}
    \le \frac{2}{\eps^2}\norm{J}_\infty\norm{J'}_\infty. 
  \end{equation*}
  Hence,
  \begin{multline*}
    \Bigl|\int_\R R_2^+(\zeta)\,d\zeta\Bigr|
    \le \frac{\Dx}{\eps^2}\norm{J}_\infty\norm{J'}_\infty\sum_{i=1}^d
    \abs{\Dp^i A(u_\Dx)}(\partial_{x_i}A(u))^2 
    \\
    \conv{u,u_\Dx}(J_{r_0} \otimes J_r) \otimes (J_{r_0} \otimes J_r).
  \end{multline*}
  Applying Lemma~\ref{lemma:doubleConvolutionEstimate}, with
  $K_1(u)=\abs{u}$, $K_2(v)=v^2$, $p=q=2$, we get
  \begin{multline*}
    \Bigl\|\int_\R R_2^+(\zeta)\,d\zeta\Bigr\|_{L^1(\Pi_T^{r_0})} 
    \le \frac{\Dx}{\eps^2}\norm{J}_\infty\norm{J'}_\infty\sum_{i=1}^d
    \norm{\Dp^i
      A(u_\Dx)}_{L^2(\Pi_T)}\norm{\partial_{x_i}A(u)}_{L^2(\Pi_T)}^2 
    \\
    \times \norm{J_{r_0} \otimes J_r}_{L^2(\R \times
      \R^d)}\norm{J_{r_0} \otimes J_r}_{L^1(\R \times \R^d)}.
  \end{multline*}
  Now
  \begin{equation*}
    \norm{J_{r_0} \otimes J_r}_{L^2(\R \times \R^d)} = 
    \norm{J_{r_0}}_{L^2(\R)}\prod_{i=1}^d \norm{J_{r}}_{L^2(\R)} 
    = \frac{1}{\sqrt{r_0r^d}}\norm{J}_{L^2(\R)}^{d+1}.
  \end{equation*}
\end{proof}
\begin{estimate} \label{est:3}
  Let $R_1^\pm$ be defined in Lemma~\ref{lemma:squareTermEst} and
  suppose $d = 1$. Then there exists a constant $C = C(J)$ such that
  \begin{multline*}
    \norm{\lim_{\eps \downarrow 0} \int_\R R_2^+(\zeta) +
      R_2^-(\zeta)\,d\zeta}_{L^1(\Pi_T^{r_0})} 
    \\
    \le C\left(\frac{\Dx}{r}\norm{f}_\Lip +
      \frac{\Dx}{r^2}\norm{A}_\Lip + \frac{\Dx}{r_0}\right)\norm{\Dp
      u_\Dx}_{L^1(\Pi_T)}.
  \end{multline*}
\end{estimate}
\begin{proof}
  We consider $R_2^+$. The $R_2^-$ term can be treated similarly. 
  Note that for $d=1$,
  \begin{equation*}
  \int_\R R_2^+(\zeta)\,d\zeta = \int_\R
    \left(J_\eps(\zeta-A(u_\Dx))-J_\eps(\zeta-A(\tau^+_\Dx))\right)
    n_{A,\eps,r,r_0}(\zeta)\,d\zeta
    \conv{u_\Dx} J_{r_0} \otimes J_r,
  \end{equation*}
  where $n_{A,\eps,r,r_0}$ is defined in
  Lemma~\ref{lemma:KineticFormATransformedMollified}. 
  The map $\zeta \mapsto n_{A,\eps,r,r_0}(t,x,\zeta)$ belongs to
  $W^{1,1}_\mathrm{loc}(\R)$ for each fixed $(t,x) \in (r_0,T-r_0)\times \R$.
  Due to Lemmas~\ref{lemma:KineticFormATransformedMollified}, \ref{lemma:MollifiedChiProd}, 
  and \ref{lemma:MollifiedChiFuncProp},
  \begin{equation*}
    \lim_{\eps\to 0}\partial_\zeta n_{A,\eps,r,r_0}(\zeta) =
    B'(\zeta)\partial_t\rho_{r,r_0}(\zeta) + g'(\zeta)\partial_x
    \rho_{r,r_0}(\zeta)-\partial_x^2 \rho_{r,r_0}(\zeta) 
  \end{equation*}
  in $L^1(\R)$ for each fixed $(t,x)$, where
  \begin{equation*}
    \rho_{r,r_0}(\zeta) = \chi(A(u);\zeta) \star J_{r_0} \otimes J_r.
  \end{equation*}
  By statement (iii) of Lemma~\ref{lemma:tauthetaaIntDiff},
  \begin{align*}
    \Bigl|\lim_{\eps \downarrow 0} \int_\R R_2^+(\zeta)\,d\zeta\Bigr|
    &\le \Bigl|\int_{A(u_\Dx)}^{S_\Dx A(u_\Dx)}
    \abs{B'(\zeta)\partial_t\rho_{r,r_0}(\zeta)}\,d\zeta\Bigr|
    \conv{u_\Dx} J_{r_0} \otimes J_r
    \\
    &+\Bigl|\int_{A(u_\Dx)}^{S_\Dx A(u_\Dx)} \abs{g'(\zeta)\partial_x
      \rho_{r,r_0}(\zeta)} \,d\zeta\Bigr|
    \conv{u_\Dx} J_{r_0} \otimes J_r \\
    &+\Bigl|\int_{A(u_\Dx)}^{S_\Dx A(u_\Dx)} \abs{\partial_x^2
      \rho_{r,r_0}(\zeta)} \,d\zeta\Bigr| \conv{u_\Dx} J_{r_0} \otimes
    J_r 
    \\
    & =: \term_1 + \term_2 +\term_3.
  \end{align*}
  We consider each term separately. Let $B(\zeta) = \xi$ or
  equivalently $A(\xi) = \zeta$. It follows that
  \begin{align*}
    \term_1 &\le \Bigl|\int_{A(u_\Dx)}^{S_\Dx A(u_\Dx)}
    \abs{B'(\zeta)\chi(A(u);\zeta)}\,d\zeta\Bigr|
    \conv{u,u_\Dx}\abs{\partial_tJ_{r_0}} \otimes J_r \otimes J_{r_0} \otimes J_r \\
    &=\Bigl|\int_{u_\Dx}^{S_\Dx u_\Dx}
    \abs{\chi(u;\xi)}\,d\xi\Bigr|
    \conv{u,u_\Dx}\abs{\partial_tJ_{r_0}} \otimes J_r \otimes J_{r_0} \otimes J_r \\
    &\le \Dx \abs{\Dp A(u_\Dx)} \conv{u,u_\Dx}\abs{\partial_tJ_{r_0}}
    \otimes J_r \otimes J_{r_0} \otimes J_r.
  \end{align*}
  By Lemma~\ref{lemma:doubleConvolutionEstimate},
  \begin{equation*}
    \norm{\term_1}_{L^1(\Pi_T^{r_0})} \le
    2\frac{\Dx}{r_0}\norm{J'}_\infty\norm{\Dp u_\Dx}_{L^1(\Pi_T)}. 
  \end{equation*}
  
  Observe that $g'(A(\xi))A'(\xi) = f'(\xi)$ and $d\zeta =
  A'(\xi)d\xi$. Hence,
  \begin{align*}
    \term_2 &\le \Bigl|\int_{A(u_\Dx)}^{S_\Dx A(u_\Dx)}
    \abs{g'(\zeta)\chi(A(u);\zeta)\Bigr| \,d\zeta}
    \conv{u,u_\Dx}J_{r_0} \otimes \abs{\partial_xJ_r} \otimes J_{r_0}
    \otimes J_r
    \\
    &=\Bigl|\int_{u_\Dx}^{S_\Dx u_\Dx} \abs{f'(\xi)\chi(u;\xi)}
    \,d\xi\Bigr|
    \conv{u,u_\Dx}J_{r_0} \otimes \abs{\partial_xJ_r} \otimes J_{r_0}
    \otimes J_r 
    \\
    &\le \Dx \norm{f}_\Lip \abs{\Dp u_\Dx} \conv{u,u_\Dx}J_{r_0}
    \otimes \abs{\partial_xJ_r} \otimes J_{r_0} \otimes J_r .
  \end{align*}
  By Lemma~\ref{lemma:doubleConvolutionEstimate},
  \begin{equation*}
    \norm{\term_2}_{L^1(\Pi_T^{r_0})} \le
    2\norm{f}_\Lip\frac{\Dx}{r}\norm{J'}_\infty\norm{\Dp
      u_\Dx}_{L^1(\Pi_T)}. 
  \end{equation*}
 
  Similarly,
  \begin{align*}
    \term_3 &\le \Bigl|\int_{A(u_\Dx)}^{S_\Dx A(u_\Dx)}
    \abs{\chi(A(u);\zeta)} \,d\zeta\Bigr|
    \conv{u,u_\Dx} J_{r_0} \otimes 
    \abs{\partial_x^2 J_r} \otimes J_{r_0} \otimes J_r \\
    &\le \Dx\abs{\Dp A(u_\Dx)} \conv{u,u_\Dx} J_{r_0} \otimes
    \abs{\partial_x^2 J_r} \otimes J_{r_0} \otimes J_r.
  \end{align*}
  By Lemma~\ref{lemma:doubleConvolutionEstimate},
  \begin{equation*}
    \norm{\term_3}_{L^1(\Pi_T^{r_0})} \le
    2\frac{\Dx}{r^2}\norm{J''}_\infty\norm{\Dp
      A(u_\Dx)}_{L^1(\Pi_T)}. 
  \end{equation*}
\end{proof}

\begin{estimate} \label{est:4}
  Let $U$ be the second term in \eqref{eq:T1}, 
  Lemma~\ref{lemma:ContractionLemma}, that is,
  \begin{displaymath}
   U = 2\int_\R \nabla \rho_{\eps,r,r_0} \cdot (2\nabla
    -(\Dp + \Dm)\rho_{\eps,r,r_0}^\Dx \,d\zeta.
  \end{displaymath}
  Then there exists a constant $C = C(d,J)$ such that
  \begin{equation*}
    \norm{U}_{L^1(\Pi_T^{r_0})}
    \le C\frac{\Dx^2}{r^3}\left(1 +
      \frac{\Dx}{r}\right)\norm{A(u_\Dx)}_{L^1([0,T];BV(\R^d))}.
  \end{equation*}
\end{estimate}

\begin{remark}
  The $BV$ norm may be replaced by the $L^1$ 
  norm at the expense of an extra factor $r^{-1}$.
\end{remark}
\begin{proof}
  Clearly,
  \begin{multline*}
    \norm{\int_\R \nabla \rho_{\eps,r,r_0} \cdot
    (2\nabla -(\Dp + \Dm)\rho_{\eps,r,r_0}^\Dx \,d\zeta}_{L^1(\Pi_T^{r_0})}
    \\
    \le \norm{\nabla \rho_{\eps,r,r_0}}_{L^\infty(\Pi_T^{r_0} \times
      \R;\R^d_\infty)}\norm{(2\nabla -(\Dp +
      \Dm)\rho_{\eps,r,r_0}^\Dx}_{L^1(\Pi_T^{r_0} \times \R;\R^d_1)}.
  \end{multline*}
  By Young's inequality for convolutions,
  \begin{align*}
    \norm{\partial_{x_i}\rho_{\eps,r,r_0}}_{L^\infty(\Pi_T^{r_0} \times \R)}
    &\le \norm{\chi(A(u);\cdot)}_{L^\infty(\Pi_T \times
      \R)}\norm{J_\eps \otimes \partial_{x_i}J_r \otimes
      J_{r_0}}_{L^1(\R \times \R^d \times \R)} \notag
    \\
    &\le \norm{\partial_{x_i}J_r}_{L^1(\R^d)} \le 2\norm{J'}_\infty r^{-1}.
  \end{align*}
  We have
  \begin{equation*}
    (2\partial_{x_i} -(\Dp^i + \Dm^i))\rho_{\eps,r,r_0}^\Dx =
    \chi(A(u_\Dx);\cdot) \star  J_\eps \otimes 
    (2\partial_{x_i} -(\Dp^i + \Dm^i))J_r \otimes J_{r_0}. 
  \end{equation*}
  Using Taylor expansions with remainders,
  \begin{align*}
    \left((\Dp^i + \Dm^i)-2\partial_{x_i}\right)J_r(x)
    &= \frac{1}{2\Dx}
    \int_0^{\Dx}(z-\Dx)^2 \partial_{x_i}^3J_r(x_i+z)\,dz
    \,\prod_{j \neq i}J_r(x_j)
    \\
    &\qquad +  \frac{1}{2\Dx}
    \int_{-\Dx}^0(z+\Dx)^2 \partial_{x_i}^3J_r(x_i+z)\,dz
    \,\prod_{j \neq i}J_r(x_j)
    \\
    &=: \partial_{x_i}(\varphi^i_1(x) + \varphi^i_2(x)),
  \end{align*}
  see for instance \cite[p.~25]{KRS2014}. Hence
  \begin{equation*}
    (2\partial_{x_i} -(\Dp^i + \Dm^i))\rho_{\eps,r,r_0}^\Dx
    = \partial_{x_i}\left(\chi(A(u_\Dx);\cdot) \star J_\eps \otimes
      (\varphi^i_1 + \varphi^i_2) \otimes J_{r_0}\right). 
  \end{equation*}
  By Young's inequality for convolutions 
  \begin{multline*}
    \norm{(2\partial_{x_i} -(\Dp^i +
      \Dm^i))\rho_{\eps,r,r_0}^\Dx}_{L^1(\Pi_T^{r_0} \times \R)}
    \le \norm{\chi(A(u_\Dx);\cdot)}_{L^1([0,T] \times \R;BV(\R^d))}\\
    \times \norm{\varphi^i_1 + \varphi^i_2}_{L^1(\R^d)}.
  \end{multline*}
  Note that $\norm{\chi(A(u_\Dx);\cdot)}_{L^1([0,T] \times
    \R;BV(\R^d))} = \norm{A(u_\Dx)}_{L^1([0,T];BV(\R^d))}$. Now, as 
  $\partial_{x_i}J_r(x_i + z) \le r^{-3}
  \norm{J''}_\infty\car{\abs{x_i+ z} \le r}$, it follows that 
  \begin{align*}
    \norm{\varphi^i_1}_{L^1(\R^d)} &= \frac{1}{2\Dx}\int_\R
    \Bigl|\int_0^{\Dx}(z-\Dx)^2 \partial_{x_i}^2J_r(x_i+z)\,dz\Bigr|\, dx_i 
    \\
    &\le \frac{(r + \Dx)}{\Dx
      r^3}\norm{J''}_\infty\int_0^{\Dx}(z-\Dx)^2\,dz 
    \\
    &\le \frac{1}{3}\norm{J''}_\infty \frac{\Dx^2}{r^2}\left(1 +
      \frac{\Dx}{r}\right).
  \end{align*}
  The same estimate applies to $\varphi_2^i$.
\end{proof}

\begin{estimate} \label{est:5}
  Let $\term$ be the term \eqref{eq:T7} from 
  Lemma \ref{lemma:ContractionLemma}, that is,
  \begin{equation*}
    \term = \int_R (\signe{\zeta-A(u)} \star J_{r_0} \otimes
    J_r)(\Delta - \Dm \cdot \Dp)\rho_{\eps,r,r_0}^\Dx \,d\zeta.
  \end{equation*}
  Then there exists a constant $C = C(d,J)$ such that
  \begin{equation*}
    \norm{\term}_{L^1(\Pi_T^{r_0})} \le C\frac{\Dx^2}{r^3}\left(1 +
      \frac{\Dx}{r}\right)\norm{A(u_\Dx)}_{L^1([0,T];BV(\R^d))}. 
  \end{equation*}
\end{estimate}
\begin{remark}
  At the cost of an extra factor $r^{-1}$, the $BV$ 
  norm may be replaced by the $L^1$ norm.
\end{remark}
\begin{proof}
  First note that $\abs{\signe{\zeta-A(u)} \star J_{r_0} \otimes J_r}
  \le 1$, so
  \begin{equation*}
    \abs{\term} \le \norm{(\Delta - \Dm \cdot
      \Dp)\rho_{\eps,r,r_0}^\Dx}_{L^1(\Pi_T \times \R)}. 
  \end{equation*}
  Now,
  \begin{equation*}
    (\partial_{x_i}^2 - \Dm^i\Dp^i)\rho_{\eps,r,r_0}^\Dx 
    = \chi(A(u_\Dx);\cdot) \star J_\eps \otimes (\partial_{x_i}^2 -
    \Dm^i\Dp^i)J_r \otimes J_{r_0}. 
  \end{equation*}
  Using a Taylor expansion \cite[p.24]{KRS2014},
  \begin{align*}
    (\partial_{x_i}^2-\Dm^i\Dp^i)J_r(x)
    &= \frac{1}{6\Dx^2}\int_0^\Dx (z-\Dx)^3\partial_{x_i}^4J_r(x_i +
    z)\,dz \,\prod_{j \neq i}J_r(x_j)
    \\
    &\qquad - \frac{1}{6\Dx^2}\int_{-\Dx}^0
    (z+\Dx)^3\partial_{x_i}^4J_r(x_i + z)\,dz \,\prod_{j \neq i}J_r(x_j) 
    \\
    &=: \partial_{x_i}(\varphi_1^i(x) + \varphi_2^i(x)).
  \end{align*}
  Hence,
  \begin{equation*}
    (\partial_{x_i}^2 - \Dm^i\Dp^i)\rho_{\eps,r,r_0}^\Dx 
    =\partial_{x_i}\left(\chi(A(u_\Dx);\cdot) \star J_\eps \otimes
      (\varphi_1^i + \varphi_2^i)  \otimes J_{r_0}\right). 
  \end{equation*}
  By Young's inequality for convolutions,
  \begin{multline*}
    \norm{(\partial_{x_i}^2 -
      \Dm^i\Dp^i)\rho_{\eps,r,r_0}^\Dx}_{L^1(\Pi_T \times \R)}
    \le \norm{\chi(A(u_\Dx);\cdot)}_{L^1([0,T] \times \R;BV(\R^d))}\\
    \times \norm{\varphi^i_1 + \varphi^i_2}_{L^1(\R^d)}.
  \end{multline*}
  It remains to estimate the $L^1$ 
  norm of $\varphi^i_1$ and $\varphi^i_2$:
  \begin{align*}
    \norm{\varphi^i_1}_{L^1(\R^d)}
    &= \frac{1}{6\Dx^2}\int_\R \Bigl|\int_0^\Dx
      (z-\Dx)^3\partial_{x_i}^3J_r(x_i + z)\,dz\Bigr| \,dx_i 
    \\
    &\le \frac{r + \Dx}{3\Dx^2r^4}\norm{J^{(3)}}_\infty
    \Bigl|\int_0^\Dx (z-\Dx)^3 \,dz\Bigr|
    \\
    &=\frac{\norm{J^{(3)}}_\infty}{12}\frac{\Dx^2}{r^3}\left(1 +
      \frac{\Dx}{r}\right).
  \end{align*}
  A similar estimate applies to $\varphi^i_2$.
\end{proof}
\begin{estimate}\label{est:6}
  Let $\term_1$ and $\term_2$ be the terms from \eqref{eq:T6} 
  in Lemma~\ref{lemma:ContractionLemma}, that is,
  \begin{align*}
    \term_1 &= \int_\R \left(\signe{\zeta-A(u)} \star J_{r_0}
      \otimes J_r\right)G_1'(\zeta)\cdot
    (\Dp-\nabla)\rho_{\eps,r,r_0}^\Dx \,d\zeta, 
    \\
    \term_2 &= \int_\R \left(\signe{\zeta-A(u)} \star J_{r_0}
      \otimes J_r\right)G_2'(\zeta)\cdot
    (\Dm-\nabla)\rho_{\eps,r,r_0}^\Dx \,d\zeta,
  \end{align*}
  where $G_k(A(u))=F_k(u)$ for $j=1,2$. Then there exists 
  a constant $C = C(d,J)$ such that
  \begin{equation*}
    \norm{\term_k}_{L^1(\Pi_T^{r_0})} \le C  \frac{\Dx}{r}\left(1 +
      \frac{\Dx}{r}\right)\norm{F_k'}_{L^\infty(\R;\R^d)}
    \norm{A(u_\Dx)}_{L^1([0,T];BV(\R^d))} 
  \end{equation*}
  for  $k = 1,2$.
\end{estimate}

\begin{remark}
 Again the $BV$ norm may be replaced by the
 $L^1$ norm at the cost of an extra factor $r^{-1}$.
\end{remark}

\begin{proof}
  Consider $\term_1$. We can change variables $\zeta=A(\xi)$, which yields
  \begin{equation*}
     \term_1 = \int_\R \left(\signe{A(\xi)-A(u)} \star J_{r_0}
      \otimes J_r\right)F_1'(\zeta)\cdot
    (\Dp-\nabla)\rho_{\eps,r,r_0}^\Dx \,d\xi.
  \end{equation*}
  Then observe that
  \begin{align*}
    \norm{\term_1}_{L^1(\Pi_T^{r_0})} &\le \norm{F_1' \cdot (\Dp
      -\nabla)\left(\chi(A(u_\Dx);\cdot) \star J_\eps \otimes
        J_r \otimes J_{r_0}\right)}_{L^1(\Pi_T \times \R)}.
  \end{align*}
  We have
  \begin{multline*}
    (\Dp^i -\partial_{x_i})\left(\chi(A(u_\Dx);\cdot) \star J_\eps
      \otimes J_r \otimes J_{r_0}\right) 
    \\
    = \chi(A(u_\Dx);\cdot) \star J_\eps \otimes (\Dp^i
    -\partial_{x_i})J_r \otimes J_{r_0}.
  \end{multline*}
  By Taylor expansions,
  \begin{align*}
    (\Dp^i -\partial_{x_i})J_r(x) &= \frac{1}{\Dx}\int_0^\Dx
    (\Dx-z)\partial_{x_i}^2 J_r(x_i+z)\,dz \prod_{j \neq i} J_r(x_j) 
    \\
    &=: \partial_{x_i}\varphi(x).
  \end{align*}
  By Young's inequality for convolutions,
  \begin{multline*}
    \norm{(F_1^i)'(\Dp^i -\partial_{x_i})\left(\chi(A(u_\Dx);\cdot)
        \star J_\eps \otimes J_r \otimes J_{r_0}\right)}_{L^1(\Pi_T
      \times \R)} 
    \\
    \le \norm{(F_1^i)'}_{L^\infty(\R)}
    \norm{\chi(A(u_\Dx);\cdot)}_{L^1([0,T] \times
      \R;BV(\R^d))}\norm{\varphi}_{L^1(\R^d)}.
  \end{multline*}
  It remains to estimate $\norm{\varphi}_{L^1(\R^d)}$:
  \begin{align*}
    \norm{\varphi}_{L^1(\R^d)} &= \frac{1}{\Dx}\int_\R \Bigl|\int_0^\Dx
      (\Dx-z)\partial_{x_i} J_r(x_i+z)\,dz\Bigr|\,dx_i 
    \\
    &\le \frac{1}{\Dx}2\norm{J'}_\infty\frac{r + \Dx}{r^2} \int_0^\Dx
    (\Dx-z)\,dz 
    \\
    &= \norm{J'}_\infty \frac{\Dx}{r}\left(1 + \frac{\Dx}{r}\right),
  \end{align*}
  from which the estimate of $\term_1$ follows. 
  Similar arguments apply to $\term_2$.
\end{proof}

\begin{estimate} \label{est:7}
  Consider the terms \eqref{eq:T2}, \eqref{eq:T3}, \eqref{eq:T4}, 
  and \eqref{eq:T5} from Lemma \ref{lemma:ContractionLemma}. 
  Suppose $A' > \eta$ and set $B:= A^{-1}$. Let
  \begin{equation*}
    \begin{split}
      \term_1 &= \int_\R \left(\signe{\zeta-A(u_\Dx)}\star J_{r_0} \otimes
        J_r\right)\partial_tR_{\eps,r,r_0}^{B'}(\zeta) \,d\zeta, 
      \\
      \term_2 &= \int_\R \left(\signe{\zeta-A(u)}\star J_{r_0} \otimes
        J_r\right)\partial_tR_{\eps,r,r_0}^{B',\Dx}(\zeta) \,d\zeta, 
      \\
      \term_3 &= \int_\R \left(\signe{\zeta-A(u_\Dx)}\star J_{r_0} \otimes
        J_r\right)\nabla \cdot R_{\eps,r,r_0}^{g'}(\zeta) \,d\zeta, 
      \\
      \term_4 &= \int_\R \left(\signe{\zeta-A(u)}\star J_{r_0} \otimes
        J_r\right) 
      \\
      &\hphantom{XXXXXXXXXXXXX} \times \left(\Dp \cdot
        R_{\eps,r,r_0}^{G_1',\Dx}(\zeta) + \Dm \cdot
        R_{\eps,r,r_0}^{G_2',\Dx}(\zeta)\right) \,d\zeta,
    \end{split}
  \end{equation*}
  where
  \begin{equation*}
    R_{\eps,r,r_0}^{f}(\zeta) = R_\eps^{f}(A(u),\zeta) \star J_{r_0} \otimes J_r
    \text{ and }
    R_{\eps,r,r_0}^{f,\Dx}(\zeta) = R_\eps^{f}(A(u_\Dx),\zeta) \star
    J_{r_0} \otimes J_r 
  \end{equation*}
  for any function $f$, and $R_\eps^{f}$ is defined in
  equation \eqref{eq:RfDefinition}. Then
  \begin{align*}
    \norm{\term_k}_{L^\infty(\Pi_T^{r_0})} &\le 8\frac{\eps}{\eta
      r_0}\norm{J'}_\infty \mbox{ for $k = 1,2$}, 
    \\
    \norm{\term_3}_{L^\infty(\Pi_T^{r_0})} &\le 8\frac{\eps}{\eta
      r}\norm{J'}_\infty \sum_{i=1}^d \norm{f_i}_\Lip, 
    \\
    \norm{\term_4}_{L^\infty(\Pi_T^{r_0})} &\le 8\frac{\eps}{\eta
      r}\left(1 + \frac{\Dx}{r}\right)\norm{J'}_\infty
    \sum_{k=1}^2\sum_{i=1}^d \norm{F_k^i}_\Lip.
  \end{align*}
\end{estimate}

\begin{proof}
  Consider $\term_1$. Moving the $t$ derivative onto $J_{r_0}$, we have that
  \begin{equation*}
    \term_1 = \int_\R \signe{\zeta-A(u_\Dx)} R_{\eps}^{B'}(A(u),\zeta)
    \,d\zeta \conv{u_\Dx,u} J_{r_0} \otimes J_r
    \otimes \partial_tJ_{r_0} \otimes J_r. 
  \end{equation*}
  By Lemma~\ref{lemma:doubleConvolutionEstimate}, 
  equation~\eqref{eq:lemma:doubleConvolutionEstimateInftyBound}, 
  Lemma~\ref{lemma:MollifiedChiProd}, and 
  equation~\eqref{eq:IntByPartsBoundOnRGDer} with $f(z) = z$,
  \begin{equation*}
    \norm{\term_1}_{L^\infty(\Pi_T^{r_0})} \le 4\frac{\eps}{\eta}\norm{J_{r_0}
      \otimes J_r}_{L^1(\R \times \R^d)}\norm{\partial_tJ_{r_0} \otimes
      J_r}_{L^1(\R \times \R^d)} \le 8\frac{\eps}{\eta r_0}\norm{J'}_\infty. 
  \end{equation*}
  The $L^\infty$ bound on $\term_2$ follows similarly. 
  
  Let us consider $\term_3$:
  \begin{equation*}
    \term_3 = \sum_{i=1}^d\int_\R
    \signe{\zeta-A(u_\Dx)}R_{\eps}^{g_i'}(A(u),\zeta) \,d\zeta
    \conv{u_\Dx,u} J_{r_0} \otimes J_r \otimes J_{r_0}
    \otimes \partial_{x_i}J_r. 
  \end{equation*}
  By Lemma~\ref{lemma:doubleConvolutionEstimate}, 
  equation~\eqref{eq:lemma:doubleConvolutionEstimateInftyBound}, 
  Lemma~\ref{lemma:MollifiedChiProd}, and 
  equation~\eqref{eq:IntByPartsBoundOnRGDer} with $f(z) = f_i(z)$,
  \begin{equation*}
    \norm{\term_3}_{L^\infty(\Pi_T^{r_0})} \le 4\frac{\eps}{\eta}\sum_{i=1}^d
    \norm{f_i}_\Lip\norm{\partial_{x_i}J_r}_{L^1(\R^d)} \le
    8\frac{\eps}{\eta r}\norm{J'}_\infty \sum_{i=1}^d
    \norm{f_i}_\Lip. 
  \end{equation*}

  The terms in $\term_4$ are estimated in the
  same way, but in view \eqref{eq:FiniteDiffofMollifierEst} we 
  can utilize the bound 
  \begin{equation*}
    \norm{J_{r_0} \otimes D_{\pm}J_r}_{L^1(\R \times \R^d)} \le
    2\norm{J'}_\infty \frac{1}{r}\left(1 + \frac{\Dx}{r}\right). 
  \end{equation*}
\end{proof}

\subsection{Concluding the proof of 
Theorem \ref{theorem:MultidLocal}}\label{subsec:mainproof}
Recall that $Q_\varepsilon$, cf.~\eqref{eq:Lepsdef}, was introduced 
as an approximation to the contraction functional $Q$, cf.~\eqref{eq:QDef}.
Recall the basic property \cite{ChenPerthame2003,Perthame1998}
\begin{equation} \label{eq:AbsQExpression}
  \abs{u-v} = \int_\R Q(u,v;\xi)\,d\xi, \qquad u,v\in \R.
\end{equation}
To argue for this relation, note that 
  \begin{equation*}
    Q(u,v;\xi) = \abs{\chi(u;\xi)} +
    \abs{\chi(v;\xi)}-2\chi(u;\xi)\chi(v;\xi) = (\chi(u;\xi)-\chi(v;\xi))^2.
  \end{equation*}
 Next, observe that 
  \begin{equation}\label{eq:ChiDiffEqu}
    \chi(u;\xi)-\chi(v;\xi) = \chi(u-v;\xi - v);
  \end{equation}
  indeed, for any $S \in C^1_0(\R)$, 
  \begin{align*}
    \int_\R S'(\xi)(\chi(u;\xi)-\chi(v;\xi))\,d\xi 
    &= \int_u^v S'(\xi)\,d\xi
    = \int_0^{u-v}S'(\sigma + v)\,d\sigma 
    \quad \text{(here $\sigma = \xi-v$)}\\
    &= \int_\R S'(\sigma + v)\chi(u-v;\sigma)\,d\sigma \\
    &= \int_\R S'(\xi)\chi(u-v;\xi-v)\,d\xi.
  \end{align*}
  Hence, the claim follows: 
  \begin{equation*}
    \int_\R (\chi(u;\xi)-\chi(v;\xi))^2 \,d\xi = \int_\R
    \abs{\chi(u-v;\xi-v)}\,d\xi = \abs{u-v}. 
  \end{equation*}

Let us quantify the approximation properties of $Q_\eps$.
\begin{lemma}
  \label{lem:absdiffs}
  Let $A' \ge \eta>0$, $B = A^{-1}$, and $f = g \circ A$. Define
  \begin{align*}
    P &= \int_\R Q_\eps(A(u),A(v);\zeta)B'(\zeta)\,d\zeta-\abs{u-v},\\
    M&= \int_\R Q_\eps(A(u),A(v);\zeta) g'(\zeta)\,d\zeta - \sign{u-v}
    \left(f(u)-f(v)\right),
   \intertext{and}
    N&=\int_\R Q_\eps(A(u),A(v);\zeta)\,d\zeta - \abs{A(u)-A(v)},
  \end{align*}
  for any $u$ and $v$, and where 
  $Q_{\eps}$ is given by \eqref{eq:Lepsdef}. 
  Then 
  \begin{equation*}
    \abs{P} \le 16 \frac{\eps}{\eta},\qquad
    \abs{M}\le 8\frac{\eps}{\eta}, \qquad
    \abs{N}\le 8\frac{\eps}{\eta}.
  \end{equation*}
\end{lemma}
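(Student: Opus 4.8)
I would establish the lemma by reducing each of $P$, $M$ and $N$ to the integral of a single ``regularization defect'' against a fixed weight, and then estimating that defect using the elementary properties of $\chi_\eps$ and $\mathrm{sign}_\eps$ from Lemma~\ref{lemma:MollifiedChiFuncProp} together with the commutator estimate \eqref{eq:IntByPartsBoundOnRGDer}. The starting point is the three \emph{exact} kinetic identities
\begin{equation*}
  \abs{u-v}=\int_\R B'(\zeta)\,Q(A(u),A(v);\zeta)\,d\zeta,\qquad
  \abs{A(u)-A(v)}=\int_\R Q(A(u),A(v);\zeta)\,d\zeta,
\end{equation*}
together with $\sign{u-v}(f(u)-f(v))=\int_\R g'(\zeta)\,Q(A(u),A(v);\zeta)\,d\zeta$, where $Q(a,b;\zeta)=(\chi(a;\zeta)-\chi(b;\zeta))^2=\car{\zeta\ \text{between}\ a\ \text{and}\ b}$. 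The middle identity is \eqref{eq:AbsQExpression} applied to the pair $A(u),A(v)$. For the other two, since $A(0)=0$ and $A$ is strictly increasing (as $A'\ge\eta>0$) one has $\chi(w;\xi)=\chi(A(w);A(\xi))$ for all $w,\xi$, hence $Q(u,v;\xi)=Q(A(u),A(v);A(\xi))$; substituting $\zeta=A(\xi)$, $d\xi=B'(\zeta)\,d\zeta$, in \eqref{eq:AbsQExpression} gives the first, while $\int_\R g'(\zeta)Q(A(u),A(v);\zeta)\,d\zeta=\int_{A(u)\wedge A(v)}^{A(u)\vee A(v)}g'(\zeta)\,d\zeta=\sign{u-v}\bigl(g(A(u))-g(A(v))\bigr)$, combined with $g\circ A=f$, gives the third. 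Subtracting, each of $P$, $M$, $N$ equals $\int_\R w(\zeta)\,\mathcal D(\zeta)\,d\zeta$ where $\mathcal D(\zeta):=Q_\eps(A(u),A(v);\zeta)-Q(A(u),A(v);\zeta)$ and the weight $w$ is $B'$, $g'$, and $1$ respectively.

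Next I decompose the defect. With $a=A(u)$, $b=A(v)$, Lemma~\ref{lemma:MollifiedChiFuncProp}(ii) and its unregularized analogue give $Q_\eps(a,b;\zeta)=\signe{\zeta}\chi_\eps(a;\zeta)+\signe{\zeta-a}\chi_\eps(b;\zeta)$ and $Q(a,b;\zeta)=\sign{\zeta}\chi(a;\zeta)+\sign{\zeta-a}\chi(b;\zeta)$, so that $\mathcal D$ is the sum of
\begin{equation*}
  \signe{\zeta}\bigl(\chi_\eps(a;\zeta)-\chi(a;\zeta)\bigr)+\bigl(\signe{\zeta}-\sign{\zeta}\bigr)\chi(a;\zeta)
\end{equation*}
and the analogous expression with $a$ replaced by $b$ and with $\signe{\zeta},\sign{\zeta}$ replaced by $\signe{\zeta-a},\sign{\zeta-a}$. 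Each summand is a factor of modulus $\le1$ times either a ``$\chi$-defect'', with $\int_\R\abs{\chi_\eps(c;\zeta)-\chi(c;\zeta)}\,d\zeta\le4\eps$ by Lemma~\ref{lemma:MollifiedChiFuncProp}(iii), or a ``$\mathrm{sign}$-defect'', with $\int_\R\abs{\signe{\zeta}-\sign{\zeta}}\,d\zeta\le2\eps$ (its integrand is supported on an interval of length $2\eps$ and bounded by $1$). Hence $\int_\R\abs{\mathcal D(\zeta)}\,d\zeta\le C\eps$, and since $\abs{B'}\le1/\eta$ (because $A'\ge\eta$), while the weight for $N$ is $1$, the stated bounds for $P$ and $N$ follow; for $N$ one may alternatively invoke $\int_\R Q_\eps(a,b;\zeta)\,d\zeta=\int_\R(\chi_\eps(a;\zeta)-\chi_\eps(b;\zeta))^2\,d\zeta$ and factor the resulting difference of squares.

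The one point that requires care is the flux term $M$, where the weight $g'=(f'\circ B)\,B'$ must be controlled. Having fixed $f$ to be Lipschitz (which, by the $L^\infty$ bounds, loses nothing) one has the global bound $\norm{g'}_{L^\infty}\le\norm{f}_\Lip/\eta$, and then the estimate of the previous paragraph applies verbatim to $\int_\R g'\,\mathcal D\,d\zeta$. If instead one prefers to avoid any pointwise bound on $g'$, the alternative is not to expand $\chi_\eps-\chi$ but to keep the mollification attached to $g'$ via the commutator $R_\eps^{g'}$ of \eqref{eq:RfDefinition}: substitute $g'(\zeta)\chi_\eps(c;\zeta)=\bigl((g'\chi(c;\cdot))\star J_\eps\bigr)(\zeta)-R_\eps^{g'}(c;\zeta)$ into each bracket of $\int_\R g'\,\mathcal D\,d\zeta$, move the mollifier by Fubini so that the principal parts $\int_\R g'(\zeta)\sign{\zeta}\chi(a;\zeta)\,d\zeta$ and $\int_\R g'(\zeta)\sign{\zeta-a}\chi(b;\zeta)\,d\zeta$ cancel the matching pieces of $Q$, and recognize the remaining integrals $\int_\R\signe{\zeta}\,R_\eps^{g'}(a;\zeta)\,d\zeta$ and $\int_\R\signe{\zeta-a}\,R_\eps^{g'}(b;\zeta)\,d\zeta$ as instances of the quantity $Z$ already bounded by a multiple of $\eps/\eta$ in \eqref{eq:IntByPartsBoundOnRGDer} (a residual term, supported within $2\eps$ of $0$ or of $a$, is handled trivially). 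Either way $\abs{M}\le C\eps/\eta$, and the rest is bookkeeping of the numerical constants.
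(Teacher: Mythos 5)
Your proposal is correct in substance and follows the same basic skeleton as the paper: reduce each of $P$, $M$, $N$ to an integral of the defect $Q_\eps-Q$ evaluated at $(A(u),A(v))$ against a weight, using \eqref{eq:AbsQExpression} together with $Q(u,v;\xi)=Q(A(u),A(v);A(\xi))$, and then use that the regularization defects of $\chi$ and of the sign function are $O(\eps)$ in $L^1(d\zeta)$. For $P$ your argument is essentially identical to the paper's (weight bounded by $\norm{B'}_\infty\le\eta^{-1}$, defect of size $O(\eps)$); your two-term splitting $Q_\eps(a,b;\zeta)=\signe{\zeta}\chi_\eps(a;\zeta)+\signe{\zeta-a}\chi_\eps(b;\zeta)$ versus the paper's three-term expansion is immaterial. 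Where you genuinely deviate is $M$ (and $N$): the paper changes variables back to the $u$-variable, so the relevant integrands are bounded (by $2\abs{f'}$, resp.\ $2$) and supported on sets of measure $O(\eps/\eta)$, which is where its $\eta^{-1}$ comes from; you instead stay in the $\zeta$-variable and import the $\eta^{-1}$ through $\norm{g'}_\infty\le\norm{f}_\Lip/\eta$, or through the commutator $R^{g'}_\eps$ and the bound \eqref{eq:IntByPartsBoundOnRGDer} (which is indeed how the paper treats the analogous flux terms in Estimate~\ref{est:7}), so both routes are legitimate and give $\abs{M},\abs{N}\le C\,\eps/\eta$. One caveat on your closing remark that only ``bookkeeping of constants'' remains: your estimates do not literally reproduce the stated constants --- your $M$-bound carries an extra factor $\norm{f}_\Lip$ (the paper's own display also has $\abs{f'(\zeta)}$ in the integrand, which silently disappears from its stated constant), and your bound $\abs{N}\le 12\eps$ only implies $8\eps/\eta$ when $\eta$ is not too large --- but since these quantities are only used downstream in the form $C\eps/\eta$ with a generic constant, this discrepancy is cosmetic rather than a gap.
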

\begin{proof}
  Because $A'>0$, $Q(u,v;\xi) =
  Q(A(u),A(v);A(\xi))$. Hence we can use
  \eqref{eq:AbsQExpression} and a change of 
  variables to obtain the identify 
  \begin{equation*}
    P = \int_\R
    \left(Q_\eps(A(u),A(v);\zeta)-Q(A(u),A(v);\zeta)\right)B'(\zeta)\,d\zeta. 
  \end{equation*}
  By definition of $Q$ and the equality
  $\abs{\chi(u;\xi)} = \sign{\xi}\chi(u;\xi)$,
  \begin{align*}
    	Q(A(u),A(v);\zeta) & = \sign{\zeta}\chi(A(u);\zeta) +
    	\sign{\zeta}\chi(A(v);\zeta) 
	\\ & \qquad\qquad 
	-2\chi(A(u);\zeta)\chi(A(v);\zeta). 
  \end{align*}
  Thus,
  \begin{align*}
    P&= \int_\R \left(\signe{\zeta}\chi_\eps(A(u);\zeta)
      -\sign{\zeta}\chi(A(u);\zeta)\right)B'(\zeta) \,d\zeta 
    \\
    &\quad + \int_\R \left(\signe{\zeta}\chi_\eps(A(v);\zeta)-
      \sign{\zeta}\chi(A(v);\zeta)\right)B'(\zeta) \,d\zeta 
    \\
    &\quad + 2\int_\R
    \left(\chi(A(u);\zeta)\chi(A(v);\zeta)-\chi_\eps(A(u);\zeta)
      \chi_\eps(A(v);\zeta))\right)B'(\zeta)\,d\zeta  
    \\
    &=: P_1 + P_2 + P_3.
  \end{align*}
  Finding that the measure of the support of the integrand is bounded
  by $4\eps$ for $P_1$, $P_2$, and $P_3$, we conclude that
  \begin{equation*}
    \abs{P} \le 16 \eps \norm{B'}_\infty,
  \end{equation*}
  and then the bound on $P$ follows since $\norm{B'}_\infty \le \eta^{-1}$. 
  
  To prove the inequality for $M$, note that
  \begin{align*}
    \sign{u-v}&(f(u)-f(v))\\&=\int_\R
    \sign{u-v}(\chi(u;\zeta)-\chi(v;\zeta))f'(\zeta)\,d\zeta\\
    &=\int_\R \abs{\chi(u,\zeta)-\chi(v;\zeta)}f'(\zeta)\,d\zeta\\
    &=\int_\R
    \left[\sign{\zeta}\chi(u;\zeta)+\sign{\zeta}\chi(v;\zeta)
    -2\chi(u;\zeta) \chi(v;\zeta)\right]f'(\zeta)\,d\zeta.
  \end{align*}
  Changing variables, we arrive at 
  \begin{equation*}
    \int_\R Q_\eps(A(u),A(v);\zeta)g'(\zeta) \,d\zeta= 
    \int_\R Q_\eps(A(u),A(v);A(\zeta)) f'(\zeta)\,d\zeta,
  \end{equation*}
  and, since $\sign{\zeta}\chi(w;\zeta)
  =\sign{A(\zeta)}\chi(A(w);A(\zeta))$, we find that
  \begin{align*}
    \abs{M}&\le \int_\R \Bigl| \signe{A(\zeta)}\chi_\eps(A(u);A(\zeta))
      - \sign{A(\zeta)}\chi(A(u);A(\zeta))\Bigr| \,\abs{f'(\zeta)}\,d\zeta
    \\
    &\, + \int_\R \Bigl|\signe{A(\zeta)}\chi_\eps(A(v);A(\zeta)) -
      \sign{A(\zeta)}\chi(A(v);A(\zeta))\Bigr| \,\abs{f'(\zeta)}\,d\zeta
    \\
    &\, + 2\!\! \int_\R
    \abs{\chi_\eps(A(u);A(\zeta))\chi_\eps(A(v);A(\zeta)) -
      \chi(A(u);A(\zeta))\chi(A(v);A(\zeta))} \abs{f'(\zeta)}\,d\zeta.
  \end{align*}
  Each of the three integrands is bounded by $2$ and has support
  where $\abs{A(\zeta)}<\eps$, i.e., where $\abs{\zeta}\le \eps/\eta$,
  hence $\abs{M}\le 8\eps/\eta$. The proof of the bound on $\abs{N}$ is similar.
\end{proof}

\begin{proof}[Concluding the proof of
  Theorem~\ref{theorem:MultidLocal}]
  We shall choose a positive test function $\phi\le 1$, such that $\abs{\nabla
    \phi}$ and $\abs{\Delta \phi}$ are bounded by $C\phi$. This
  will be convenient when we estimate terms containing $\nabla \phi$
  or $\Delta \phi$. 

  A test function with the necessary properties can be defined as follows,
  fix $R > d\sqrt{d}$ and define $\hat{\phi}:\R^d \rightarrow \R$ by
  \begin{equation*}
    \hat{\phi}(x) = 
    \begin{cases}
      1 &\text{if $\abs{x} \le R + \sqrt{d}$,} \\
      \exp((R + \sqrt{d}-\abs{x})/\sqrt{d}) &\text{otherwise.}
    \end{cases}
  \end{equation*}
  Define $\test = \hat{\phi} \star J^{\otimes n}$ and note that
  $\test(x) = 1$ for $x \in B(0,R)$. Note that $\hat{\phi}$ is 
  weakly differentiable and satisfies
  \begin{equation*}
    \partial_{x_i}\hat{\phi}=
    \begin{cases} -\frac{x_i}{\sqrt{d}\abs{x}}\hat{\phi}(x) 
      &\abs{x}> R+\sqrt{d},\\
      0 &\abs{x}<R+\sqrt{d}.
    \end{cases} 
  \end{equation*}
  It follows that  $|\nabla \phi(x)| \le \frac{1}{\sqrt{d}}\phi(x)$.
  In order to bound $\Delta \phi$ we first note that
  \begin{equation*}
    \Delta \hat{\phi}(x) =
    \left(\frac{1}{d}-\frac{d-1}{\sqrt{d}\abs{x}}\right)\hat{\phi}(x),  
    \ \text{ for $\abs{x} > R + \sqrt{d}$.}
  \end{equation*}
  Furthermore,
  \begin{equation*}
   \frac{1}{d^2} \le \left(\frac{1}{d}-\frac{d-1}{\sqrt{d}\abs{x}}\right) 
   \le \frac{1}{d} \ \text{ for  $\abs{x} \ge d\sqrt{d}$.}
 \end{equation*}
 It follows that $|\Delta \hat{\phi}| \le \frac{1}{d}\hat{\phi}(x)$ 
  whenever $\abs{x} > R + \sqrt{d}$. Hence
  \begin{equation*}
   \abs{\Delta \phi(x)} \le \frac{1}{d} \phi(x)\ \text{ for
     $\abs{x} > R + 2\sqrt{d}$}.  
  \end{equation*}
  If $\abs{x} \le R + 2\sqrt{d}$ it follows by the 
  lower bound $\phi(x) \ge e^{-2}$, that there exists a 
  constant $C = C(d,J)$ such that $\abs{\Delta \phi(x)} \le C\abs{\phi(x)}$.

  The next lemma estimates how far $\abs{u_\Dx-u}$ is from it
  regularized counterpart
  \begin{equation*}
    \int B'(\zeta)\left(\chi_\eps(A(u_\Dx);\zeta)-\chi_\eps(A(u);\zeta)\right)^2\,d\zeta
    \conv{u_\Dx,u} J_{r_0}\otimes J_r \otimes J_{r_0}\otimes J_r. 
  \end{equation*}
\begin{lemma}\label{lem:rr0terms}
With the notation and assumptions of Lemma~\ref{lemma:ContractionLemma},
\begin{equation}
 \begin{aligned}
\int_{\R^d} \Bigl| \int_\R
B'(\zeta)\Qeps(A(u_\Dx),A(u);&\zeta)\,d\zeta -
\abs{u_\Dx-u}\Bigr| \phi \,dx \\
& \le C\left(r+r_0 +
  \norm{\phi}_{L^1(\R^d)}\frac{\eps}{\eta}\right),
 \end{aligned}
 \label{eq:rr01}
\end{equation}
\begin{equation}
 \label{eq:rr02}
 \begin{aligned}
\int_{r_0}^\tau& \int_{\R^d} \Bigl| \Bigl[\int_\R
g'(\zeta)\Qeps(A(u_\Dx),A(u);\zeta)\,d\zeta \\
&- \sign{u_\Dx-u}(f(u)-f(u_\Dx))\Bigr]\cdot\nabla\phi
\Bigr|\,dxdt \le CT\left(r+r_0 +
  \norm{\phi}_{L^1(\R^d)}\frac{\eps}{\eta}\right),
 \end{aligned}
\end{equation}
and
\begin{equation}
 \label{eq:rr03}
 \begin{aligned}
\int_{r_0}^\tau \int_{\R^d} \Bigl| \Bigl[\int_\R &
\Qeps(A(u_\Dx),A(u);\zeta)\,d\zeta \\
&- \abs{A(u_\Dx)-A(u)}\Bigr]\Delta \phi \Bigr|\,dxdt \le
CT\left(r+r_0 +
  \norm{\phi}_{L^1(\R^d)}\frac{\eps}{\eta}\right),
 \end{aligned}
\end{equation}
where the constant $C$ only depends on the initial data, $A$, and $f$.
\end{lemma}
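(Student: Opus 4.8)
The plan is to split each of the three quantities into (i) the error of replacing the regularized microscopic functional $Q_\eps$ by the exact one $Q$ --- controlled \emph{pointwise in the convolution variables} by Lemma~\ref{lem:absdiffs} --- and (ii) the error of the $(J_{r_0}\otimes J_r)$-mollification in $(t,x)$, which is a routine translation-continuity estimate. Unfolding the definition of $\Qeps$ from Lemma~\ref{lemma:ContractionLemma},
\[
\int_\R B'(\zeta)\Qeps(A(u_\Dx),A(u);\zeta)\,d\zeta
=\Bigl(\int_\R B'(\zeta)Q_\eps(A(u_\Dx),A(u);\zeta)\,d\zeta\Bigr)\conv{u_\Dx,u}(J_{r_0}\otimes J_r)\otimes(J_{r_0}\otimes J_r),
\]
and by the $P$-bound of Lemma~\ref{lem:absdiffs} the inner $\zeta$-integral equals $\abs{u_\Dx-u}+P$ with $\abs P\le 16\eps/\eta$ for every choice of the convolution arguments. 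Since $J_{r_0}\otimes J_r$ has integral one, the contribution of $P$ to \eqref{eq:rr01} is at most $16\tfrac\eps\eta\int_{\R^d}\phi\,dx=16\tfrac\eps\eta\norm{\phi}_{L^1(\R^d)}$; the $M$- and $N$-bounds of Lemma~\ref{lem:absdiffs} handle \eqref{eq:rr02} and \eqref{eq:rr03} in exactly the same way, picking up a factor $\le T$ from the $t$-integration and using $\abs{\nabla\phi},\abs{\Delta\phi}\le C\phi$ to keep the $\norm{\phi}_{L^1}\eps/\eta$ form.

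It then remains to estimate, in each of the three cases, the mollification defect
\[
\int_{\R^d}\Bigl|\bigl(\Phi(u_\Dx,u)\conv{u_\Dx,u}(J_{r_0}\otimes J_r)\otimes(J_{r_0}\otimes J_r)\bigr)(t,x)-\Phi(u_\Dx(t,x),u(t,x))\Bigr|\,w(x)\,dx,
\]
where $\Phi(a,b)$ is $\abs{a-b}$, the entropy flux $\sign{a-b}(f(a)-f(b))$, or $\abs{A(a)-A(b)}$, and $w$ is $\phi$, $\abs{\nabla\phi}$, or $\abs{\Delta\phi}$, respectively. Each such $\Phi$ is Lipschitz in $(a,b)$ on $[-\norm{u_0}_\infty,\norm{u_0}_\infty]^2$ with a constant $L$ depending only on $f$ and $A$ (using monotonicity and local $C^1$-regularity; for the flux case one checks $\abs{\partial_a\Phi},\abs{\partial_b\Phi}\le\norm{f'}_{L^\infty}$ off the diagonal, with continuity across it). I would then bound the integrand by $L\bigl(\abs{u_\Dx(t-s_1,x-z_1)-u_\Dx(t,x)}+\abs{u(t-s_2,x-z_2)-u(t,x)}\bigr)$ averaged against $J_{r_0}(s_i)J_r(z_i)$, use $w\le C$ (indeed $w\le1$ when $w=\phi$) to integrate in $x$, and so reduce everything to a convex average of $\norm{u_\Dx(t-s,\cdot-z)-u_\Dx(t,\cdot)}_{L^1(\R^d)}$ and $\norm{u(t-s,\cdot-z)-u(t,\cdot)}_{L^1(\R^d)}$ over $\abs s\le r_0$ and $z\in[-r,r]^d$. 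Splitting each of these into a spatial and a temporal translation, the spatial parts are $\le\abs z$ times the $BV$-seminorm --- bounded by $\abs{u_0}_{BV}$ for $u$ (Lemma~\ref{lem:1.1}) and by $C\abs{u_0}_{BV}$ for $u_\Dx$ via the $\Dx$-uniform discrete $BV$-estimate (Lemma~\ref{lem:SemiDiscProp}) --- while the temporal parts are $\le C\abs s$ by the $L^1$ time-continuity of $u$ (Lemma~\ref{LipschitzContTime}) and of $u_\Dx$ (Lemma~\ref{lem:semidisc_cont}). This yields $C(r+r_0)$, with the extra factor $\le T$ for \eqref{eq:rr02}--\eqref{eq:rr03}; combining with the contributions of $P$, $M$, $N$ from the first step gives the three asserted estimates, with $C$ depending only on $u_0$, $A$, and $f$.

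I expect the only non-routine ingredient to be the temporal-translation estimate for the numerical solution $u_\Dx$: unlike the convection term, the discrete diffusion term $\Dm\cdot\Dp A(u_\Dx)$ has an $L^1$-norm that need not be $O(1)$ uniformly in $\Dx$, so the clean bound $\norm{u_\Dx(t+s,\cdot)-u_\Dx(t,\cdot)}_{L^1}\le C\abs s$ with $C$ independent of $\Dx$ must be imported from the $\Dx$-uniform time-continuity of the semi-discrete scheme proved in Appendix~\ref{app:semiexist} (Lemma~\ref{lem:semidisc_cont}), which rests on differentiating the scheme in time and exploiting its monotone, conservative structure. Everything else --- the $\eps$-replacement (Lemma~\ref{lem:absdiffs} verbatim), the mollification bounds (Young's inequality and translation continuity), and the properties $\phi\le1$, $\abs{\nabla\phi},\abs{\Delta\phi}\le C\phi$ of the cut-off constructed just above in the proof of Theorem~\ref{theorem:MultidLocal} --- is standard.
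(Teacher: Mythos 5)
Your proposal is correct and follows essentially the same route as the paper: split off the $\eps$-replacement error pointwise via Lemma~\ref{lem:absdiffs} (giving the $\norm{\phi}_{L^1}\eps/\eta$ term), then control the remaining $(J_{r_0}\otimes J_r)$-mollification defect by spatial $BV$ bounds and $L^1$ time-continuity of $u$ and $u_\Dx$, exactly as the paper does for \eqref{eq:rr01} before noting that \eqref{eq:rr02}--\eqref{eq:rr03} are proved the same way (your explicit Lipschitz-in-$(a,b)$ argument for the flux and diffusion functionals is merely spelled out where the paper leaves it implicit).
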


\begin{proof}[Proof of Lemma~\ref{lem:rr0terms}]
We establish \eqref{eq:rr01} as follows:
\begin{align*}
 \int_{\R^d} \Bigl|& \int_\R
 B'(\zeta)\Qeps(A(u_\Dx),A(u);\zeta)\,d\zeta -
 \abs{u_\Dx-u}\Bigr|\phi\,dx \\
 &= \int_{\R^d} \Bigl| \int_0^T\int_{\R^d} \Bigl( \int_\R
 B'(\zeta)Q_\eps(A(u_\Dx(s,y)),A(u(s,y));\zeta)\,d\zeta\\
 &\hphantom{= \int_{\R^d} \Bigl| \int_0^T\int_{\R^d}}\quad -
 \abs{u_\Dx(t,x)-u(t,x)} \Bigr) J_{r_0}(t-s)J_r(x-y)\,dyds \Bigr| \phi \,dx\\ 
 &\le \int_{\R^d} \int_0^T\int_{\R^d} \Bigl| \int_\R
 B'(\zeta)Q_\eps(A(u_\Dx(s,y)),A(u(s,y));\zeta)\,d\zeta\\
 &\hphantom{= \int_{\R^d} \Bigl| \int_0^T\int_{\R^d}}\quad -
 \abs{u_\Dx(s,y)-u(s,y)} \Bigr| J_{r_0}(t-s)J_r(x-y)\,dyds \phi\,dx\\
 &\quad + \int_{\R^d}\int_0^T 
 \int_{\R^d}\Bigl[ \abs{u_\Dx(t,x)-u_\Dx(s,y)}+\abs{u(t,x)-u(s,y)}\Bigr]\\
 &\hphantom{\quad + \int_{\R^d}\int_0^T
\int_{\R^d}\Bigl[}\qquad\qquad\qquad\qquad \times
 J_{r_0}(t-s)J_r(x-y)\,dydsdx\\
 &\le 16\frac{\eps}{\eta} \int_{\R^d}\phi\,dx
 +2\left(\abs{u_0}_{BV(\R^d)} +
\abs{u_\Dx(0,\cdot)}_{BV(\R^d)}\right) (r+r_0).
\end{align*}
The bounds \eqref{eq:rr02} and \eqref{eq:rr03} are proved in the
same way.
 \end{proof}

Writing the equation in Lemma~\ref{lemma:ContractionLemma} as
\begin{equation*}
\int_\R B'(\zeta)\partial_t \Qeps\,d\zeta + \int_\R
g'(\zeta)\nabla \Qeps \,d\zeta = \int_\R \Delta \Qeps\,d\zeta + \Eeps,
\end{equation*}
we multiply by $\phi$, integrate over $t\in [r_0,\tau]$ 
where $r_0 < \tau \le T-r_0$, and integrate by parts
in $x$, finally obtaining
\begin{multline*}
\int_{\R^d} \int_\R B'(\zeta) \Qeps \,d\zeta \phi
\Bigm|_{t=r_0}^{t=\tau} \,dx - \int_0^\tau \int_{\R^d} \int_\R
g'(\zeta)\Qeps \cdot \nabla \phi \,d\zeta dxdt
\\ = \int_{r_0}^\tau \int_{\R^d} \int_\R \Qeps \Delta \phi \,d\zeta dxdt  
+\int_{r_0}^\tau \int_{\R^d} \Eeps \phi \,dxdt.
\end{multline*}
Combining this with Lemma~\ref{lem:rr0terms} gives
\begin{multline*}
\int_{\R^d} \abs{u_\Dx - u}\phi\Bigm|_{t=r_0}^{t=\tau} \,dx
- \int_{r_0}^\tau \int_{\R^d} \sign{u_\Dx-u}(f(u_\Dx)-f(u))
\cdot\nabla \phi \,dxdt \\
\le \int_{r_0}^\tau \int_{\R^d} \abs{A(u_\Dx)-A(u)}\Delta \phi\,dxdt +
\int_{r_0}^\tau \int_{\R^d} \Eeps \phi \,dxdt\\
+ C_T\left(r+r_0+\frac{\eps}{\eta}\right),
\end{multline*}
where $C_T$ depends (linearly) on $T$. Using properties of
$\phi$, this can be rewritten as
\begin{equation*}
\Lambda(\tau)-\Lambda(r_0) \le C \int_{r_0}^\tau \Lambda(t)\,dt + \BEeps,
\end{equation*}
where
\begin{align*}
\Lambda(t)&=\int_{\R^d} \abs{u_\Dx(t,x)-u(t,x)}\phi(x)\,dx,\\
\BEeps &= \int_{r_0}^\tau \int_{\R^d} \Eeps \phi \,dxdt +
C_T\left(r+r_0+\frac{\eps}{\eta}\right).
\end{align*}
Gronwall's inequality then implies that
\begin{equation*}
\Lambda(\tau)\le \Lambda(r_0)
+\tau e^{C\tau}\left(\Lambda(r_0)+\BEeps\right).
\end{equation*}

Recall that $u$ depends on $\eta$, and we now make this dependence
explicit by writing $u^\eta$ and $\Lambda^\eta$. Our aim is to
estimate $u_\Dx - u^0$. By \eqref{eq:uminusueta},
\begin{align*}
& \int_{B(0,R)} \abs{u_\Dx(\tau,\cdot)-u^0(\tau,\cdot)}\,dxdt -
C\sqrt{\eta} \\
&\qquad \qquad \le \Lambda^\eta(\tau)\\
&\qquad \qquad \le C_T
\norm{u_\Dx(r_0,\cdot)-u^0(r_0,\cdot)}_{L^1(\R^d)} + C_T \BEeps\\
&\qquad \qquad \le 
Cr_0 + \norm{u_\Dx(0,\cdot)-u_0}_{L^1(\R^d)} +C_T \BEeps.
\end{align*}

Next, we estimate the terms in the integral of $\Eeps$; these are
the terms in \eqref{eq:T1}--\eqref{eq:T9}. 
By Estimate~\ref{est:4},
\begin{equation}
\label{eq:est4bnd}
\iint_{\Pi_T^{r_0}} \abs{\text{second term in \eqref{eq:T1}}}\,dxdt\le
C\frac{\Dx^2}{r^3}\left(1+\frac{\Dx}{r}\right),
\end{equation}
where $C$ depends on the initial data. 

The integral of the terms
\eqref{eq:T2}--\eqref{eq:T5} is bounded by Estimate~\ref{est:7}:
\begin{equation*}
\iint_{\Pi_T^{r_0}} \abs{\text{\eqref{eq:T2}} + \ldots + \text{\eqref{eq:T5}}}\,dxdt \le
C\frac{\eps}{\eta}\left(\frac{1}{r_0}+\frac{1}{r}\left(1+\frac{\Dx}{r}\right)\right).
\end{equation*}

The integral of \eqref{eq:T6} is bounded by
Estimate~\ref{est:6} as follows:
\begin{equation*}
\iint_{\Pi_T^{r_0}}\abs{\text{\eqref{eq:T6}}}\,dxdt 
\le C\frac{\Dx}{ r}\left(1+\frac{\Dx}{r}\right).
\end{equation*}

The integral of \eqref{eq:T7} is bounded using Estimate~\ref{est:5}:
\begin{equation*}
\iint_{\Pi_T^{r_0}} \abs{\text{\eqref{eq:T7}}}\,dxdt 
\le C\frac{\Dx^2}{r^3}\left(1+\frac{\Dx}{r}\right). 
\end{equation*}

The term \eqref{eq:T9} is bounded using 
Estimates \ref{est:1} and \ref{est:2} (if $d>1$):
\begin{subequations}
\begin{equation}\label{eq:est9bnda}
 \iint_{\Pi_T^{r_0}}\abs{\text{\eqref{eq:T9}}}\,dxdt \le 
 C\Dx\left(\frac{1}{r^2}+\frac{1}{r} 
 + \frac{1}{\eps^2\sqrt{r_0r^d}}\right).
\end{equation}
If $d=1$, we can use Estimate~\ref{est:3} 
to achieve the better bound
\begin{equation}
 \label{eq:est9bndb}
 \iint_{\Pi_T^{r_0}}\abs{\lim_{\eps\to 0}\text{\eqref{eq:T9}}}\,dxdt \le 
 C\Dx\left(\frac{1}{r^2}+\frac{1}{r}+\frac{1}{r_0}\right).
\end{equation}
\end{subequations}
Finally, the term \eqref{eq:T8} is non-positive.

The fraction $\Dx/r$ will turn out to be uniformly bounded (in fact
vanishingly small), so we can overestimate it by a constant.  Thus
the bounds \eqref{eq:est4bnd}--\eqref{eq:est9bndb} 
give the following estimate for $\BEeps$:
\begin{equation*}
\BEeps \le C_T \left(
 r+r_0+\frac{\eps}{\eta} + \frac{\eps}{\eta r_0} + \frac{\eps}{\eta
r} + \frac{\Dx}{r} + \frac{\Dx}{r^2} +
 \frac{\Dx}{\eps^2\sqrt{r_0 r^d}}\right).
\end{equation*}
If $u_0\in BV(\R^d)$, $\norm{u_\Dx(0,\cdot)-u_0}_{L^1(\R^d)} \le
\abs{u_0}_{BV(\R^d)}\Dx$, so that
\begin{multline*}
\norm{u_{\Dx}(\tau,\cdot)-u^0(\tau,\cdot)}_{L^1(B(0,R))} \\
\le C_T \left(\Dx + \sqrt{\eta} + r + r_0 +
 \frac{\eps}{\eta}+\frac{\eps}{\eta r_0} + \frac{\eps}{\eta r} +
 \frac{\Dx}{r} + \frac{\Dx}{r^2} + \frac{\Dx}{\eps^2 \sqrt{r_0
  r^d}}\right).
\end{multline*}
Now we set $r=r_0=\sqrt{\eta}$, $\eps=r^4$; using that $r<1$, the
above simplifies to
\begin{equation*}
\norm{u_{\Dx}(\tau,\cdot)-u^0(\tau,\cdot)}_{L^1(B(0,R))} \le C_T 
\left(r+ \frac{\Dx}{r^{\tfrac{17+d}{2}}}\right).
\end{equation*}
Finally, minimizing with respect to $r$ yields
\begin{equation*}
\norm{u_{\Dx}(\tau,\cdot)-u^0(\tau,\cdot)}_{L^1(B(0,R))} \le C_T \Dx^{\tfrac{2}{19+d}}.
\end{equation*}
\end{proof}

\begin{remark}
If $d=1$, the above estimate gives a convergence rate of $1/10$,
which is better than the rate reported in \cite{KKR2012}. 
However, when $d=1$, we can use \eqref{eq:est9bndb} instead of
\eqref{eq:est9bnda}. Then we have no terms with $\eps$ in the
denominator, so we can send $\eps$ to zero in \eqref{eq:T0}--\eqref{eq:T9} 
before taking absolute values and integrating.  Proceeding as above, i.e., setting 
$r=r_0=\sqrt{\eta}$, this yields the bound
\begin{equation*}
	\norm{u_{\Dx}(\tau,\cdot)-u^0(\tau,\cdot)}_{L^1(B(0,R))} 
	\le C_T\left(r+\frac{\Dx}{r^2}\right),
\end{equation*}
which gives the rate $1/3$ \cite{KRS2014}.
\end{remark}

\appendix

\section{Well-posedness of difference method}
\label{app:semiexist}
In this appendix we establish the well-posedness of the 
semi-discrete method. We also collect a series of a priori bounds. 

Introduce
\begin{equation*}
  \|\sigma\|_1 = \Dx^d \sum_\alpha |\sigma_\alpha| \quad 
  \text{and} \quad |\sigma|_{BV} = \sum_\alpha 
  \sum_{i = 1}^d |\sigma_{\alpha + e_i}-\sigma_\alpha|.
\end{equation*}
If these quantities are bounded we say 
that $\sigma = \{\sigma_\alpha\}$ is in $\ell^1(\Z^d)$ and of 
bounded variation. Let $u(t) = \{u_\alpha(t)\}_{\alpha
  \in \Z^d}$ and $u_0 = \{u_\alpha(0)\}_{\alpha \in \Z^d}$ and define
the operator $\mathcal{A}:\ell^1(\Z^d) \rightarrow \ell^1(\Z^d)$ by
\begin{equation*}
  (\mathcal{A}(u))_\alpha = \sum_{i = 1}^d 
  \Dm^i\left[F^i(u_\alpha,u_{\alpha +e_i})-\Dp^iA(u_\alpha)\right].
\end{equation*}
Then \eqref{eq:Semi-discreteScheme} may be considered as the
Cauchy problem
\begin{equation*}
  \begin{cases}
    \frac{du}{dt} + \mathcal{A}(u) = 0, & t > 0,\\
    u(0) = u_0.
  \end{cases}
\end{equation*}
This problem has a unique continuously differentiable solution for
small $t$, since
$\mathcal{A}$ is Lipschitz continuous for each $\Dx > 0$. The solution
defines a strongly continuous semigroup $\mathcal{S}(t)$ on
$\ell^1$. We want to show that this semigroup is $\ell^1$
contractive. This follows by the theory presented in
\cite{CrandallLiggett1971}, given that $\mathcal{A}$ is
accretive, i.e., 
\begin{equation*}
  \sum_\alpha \sign{u_\alpha-v_\alpha}(\mathcal{A}(u)-\mathcal{A}(v))_\alpha \ge 0.
\end{equation*}
for any $u$ and $v$ in $\ell^1(\Z^d)$ \cite{Pavel1984, Sato1968}.
\begin{lemma}
 The operator $\mathcal{A}:\ell^1(\Z^d) \rightarrow \ell^1(\Z^d)$ is
 accretive.
\end{lemma}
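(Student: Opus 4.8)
The plan is to split the operator difference $\mathcal{A}(u)-\mathcal{A}(v)$ into a convective part and a diffusive part, treat each by a discrete summation by parts, and then verify that every resulting summand has the right sign. Write $w_\alpha := u_\alpha-v_\alpha$, $a_\alpha := A(u_\alpha)-A(v_\alpha)$, and $G^i_\alpha := F^i(u_\alpha,u_{\alpha+e_i})-F^i(v_\alpha,v_{\alpha+e_i})$. Since $u,v\in\ell^1(\Z^d)\subset\ell^\infty(\Z^d)$, all the $u_\alpha,v_\alpha$ lie in a fixed bounded interval on which $A$ and each $F^i$ are Lipschitz; hence $\abs{a_\alpha}\le C\abs{w_\alpha}$ and $\abs{G^i_\alpha}\le C(\abs{w_\alpha}+\abs{w_{\alpha+e_i}})$, so every doubly indexed series appearing below converges absolutely and the index shifts are legitimate.

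For the diffusive part I would fix $i$, expand $\Dm^i\Dp^i a_\alpha=\Dx^{-2}(a_{\alpha+e_i}-2a_\alpha+a_{\alpha-e_i})$, and shift indices to obtain
\[
  \sum_\alpha\sign{w_\alpha}\,(a_{\alpha+e_i}-2a_\alpha+a_{\alpha-e_i})
  =-\sum_\alpha\bigl(\sign{w_{\alpha+e_i}}-\sign{w_\alpha}\bigr)(a_{\alpha+e_i}-a_\alpha).
\]
Because $A$ is nondecreasing, $w_\beta$ and $a_\beta$ never have strictly opposite signs and $a_\beta$ vanishes whenever $w_\beta$ does; a short case distinction on the ordering of $\sign{w_{\alpha+e_i}}$ and $\sign{w_\alpha}$ then shows each summand $(\sign{w_{\alpha+e_i}}-\sign{w_\alpha})(a_{\alpha+e_i}-a_\alpha)$ is $\ge 0$. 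Consequently the diffusive contribution $-\sum_\alpha\sign{w_\alpha}\sum_i\Dm^i\Dp^i a_\alpha$ is nonnegative.

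For the convective part I would fix $i$ and split $G^i_\alpha=p_\alpha+q_\alpha$ with
\[
  p_\alpha=F^i(u_\alpha,u_{\alpha+e_i})-F^i(v_\alpha,u_{\alpha+e_i}),\qquad
  q_\alpha=F^i(v_\alpha,u_{\alpha+e_i})-F^i(v_\alpha,v_{\alpha+e_i}).
\]
By the monotonicity in Definition~\ref{def:numflux}, $\partial_1F^i\ge 0$ forces $p_\alpha$ to have the sign of $w_\alpha$ (and to vanish when $w_\alpha=0$), so $p_\alpha\sign{w_\alpha}=\abs{p_\alpha}$; likewise $\partial_2F^i\le 0$ forces $q_\alpha\sign{w_{\alpha+e_i}}=-\abs{q_\alpha}$. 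A summation by parts gives $\sum_\alpha\sign{w_\alpha}\Dm^iG^i_\alpha=-\Dx^{-1}\sum_\alpha G^i_\alpha(\sign{w_{\alpha+e_i}}-\sign{w_\alpha})$, and since $\abs{\sign{\cdot}}\le 1$,
\[
  p_\alpha(\sign{w_{\alpha+e_i}}-\sign{w_\alpha})\le\abs{p_\alpha}-\abs{p_\alpha}=0,
  \qquad
  q_\alpha(\sign{w_{\alpha+e_i}}-\sign{w_\alpha})\le -\abs{q_\alpha}+\abs{q_\alpha}=0,
\]
so every summand of $G^i_\alpha(\sign{w_{\alpha+e_i}}-\sign{w_\alpha})$ is $\le 0$ and the convective contribution is nonnegative as well. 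Adding the two contributions yields accretivity.

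The only genuinely delicate point I foresee is the bookkeeping at the zero set of $w$: one must use the convention $\sign{0}=0$ consistently and exploit that monotonicity of $A$, and of each $F^i$ in its two arguments, forces $a_\alpha$, $p_\alpha$, $q_\alpha$ to vanish precisely where the relevant component of $w$ does. Everything else is routine discrete calculus.
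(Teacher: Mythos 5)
Your proof is correct, and it reaches the conclusion by a slightly different route than the paper. The paper first linearizes via the mean value theorem, writing $F^i(u_\alpha,u_{\alpha+e_i})-F^i(v_\alpha,v_{\alpha+e_i})=\partial_1F^i(\tau_{\alpha,i},u_{\alpha+e_i})w_\alpha+\partial_2F^i(v_\alpha,\theta_{\alpha,i})w_{\alpha+e_i}$ and $A(u_\alpha)-A(v_\alpha)=a(\xi_\alpha)w_\alpha$ with $a=A'$, then shifts indices in \eqref{eq:AccretiveComp} and concludes termwise from $\partial_1F^i\ge 0$, $\partial_2F^i\le 0$, $a\ge 0$ together with inequalities of the type $\sign{w_{\alpha+e_i}}w_\alpha\le\abs{w_\alpha}$. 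You instead keep the undifferenced quantities $p_\alpha$, $q_\alpha$, $a_\alpha$, sum by parts so that the discrete difference lands on $\sign{w}$, and finish with a sign/case analysis using only the monotonicity of $F^i$ in each argument and of $A$; after substituting the mean value expressions your termwise inequalities become exactly the paper's, so the underlying mechanism is the same, but your presentation never differentiates $F^i$ or $A$ and makes explicit the absolute-convergence justification for the index shifts (via the local Lipschitz bounds $\abs{a_\alpha}\le C\abs{w_\alpha}$, $\abs{G^i_\alpha}\le C(\abs{w_\alpha}+\abs{w_{\alpha+e_i}})$), a point the paper passes over silently. What your variant buys is a marginally more elementary and more general argument (it would survive merely monotone, locally Lipschitz $A$ and $F^i$, and it isolates the ``monotone against monotone'' structure $(\sign{w_{\alpha+e_i}}-\sign{w_\alpha})(a_{\alpha+e_i}-a_\alpha)\ge 0$ cleanly); what the paper's mean-value form buys is reuse, since the computation \eqref{eq:AccretiveComp} is invoked again almost verbatim in the proof of Lemma~\ref{lem:semidisc_cont} to get Lipschitz continuity in time. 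Your closing caveat about the zero set of $w$ is handled correctly by the convention $\sign{0}=0$ and the observation that $a_\alpha$, $p_\alpha$, $q_\alpha$ vanish where the relevant component of $w$ does.
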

\begin{proof} By definition
 \begin{equation*}
(\mathcal{A}(u)-\mathcal{A}(v))_\alpha 
= \sum_{i = 1}^d \Dm^i\left[F^i(u_\alpha,u_{\alpha +e_i})
-F^i(v_\alpha,v_{\alpha + e_i})-\Dp^i(A(u_\alpha)-A(v_\alpha))\right].
 \end{equation*}
 Let $\partial_1F^i$ and $\partial_2F^i$ denote the partial
 derivatives of $F^i$ with respect to the first and second variable
 respectively. Since $F^i$ is continuously differentiable there exist
 for each $(\alpha,i)$ some number $\tau_{\alpha,i}$ such that
 \begin{equation*}
F^i(u_\alpha,u_{\alpha + e_i}) - F^i(v_\alpha,u_{\alpha+ e_i}) 
= \partial_1F^i(\tau_{\alpha,i},u_{\alpha + e_i})(u_\alpha-v_\alpha)
 \end{equation*}
 and similarly a number $\theta_{\alpha,i}$ such that
 \begin{equation*}
F^i(v_\alpha,u_{\alpha + e_i}) - F^i(v_\alpha,v_{\alpha+ e_i}) 
= \partial_2F^i(v_\alpha,\theta_{\alpha,i})(u_{\alpha + e_i}-v_{\alpha + e_i}).
 \end{equation*}
 Let $w_\alpha = u_\alpha-v_\alpha$ then
 \begin{multline*}
F^i(u_\alpha,u_{\alpha +e_i})-F^i(v_\alpha,v_{\alpha + e_i}) \\
=  F^i(u_\alpha,u_{\alpha +e_i})- F^i(v_\alpha,u_{\alpha+ e_i}) 
+ F^i(v_\alpha,u_{\alpha+ e_i})-F^i(v_\alpha,v_{\alpha + e_i}) \\
= \partial_1F^i(\tau_{\alpha,i},u_{\alpha + e_i})w_\alpha
+ \partial_2F^i(v_\alpha,\theta_{\alpha,i})w_{\alpha + e_i}.
 \end{multline*}
 Let $A' = a$. Then there exist some $\xi_\alpha$ such that
 \begin{equation*}
A(u_\alpha)-A(v_\alpha) = a(\xi_\alpha)w_\alpha.
 \end{equation*}
 Using these expressions we obtain
 \begin{equation}\label{eq:AccretiveComp}
\begin{split}
  \sum_\alpha &\sign{u_\alpha-v_\alpha}(\mathcal{A}(u)-\mathcal{A}(v))_\alpha \\
  &= \sum_\alpha\sum_{i = 1}^d \sign{w_\alpha}\Dm^i\left[\partial_1 
  F^i(\tau_{\alpha,i},u_{\alpha + e_i})w_\alpha 
  + \partial_2F^i(v_\alpha,\theta_{\alpha,i})w_{\alpha + e_i}\right] \\
  & \qquad 
  -\sum_\alpha\sum_{i = 1}^d
  \sign{w_\alpha}\Dm^i\Dp^i(a(\xi_\alpha)w_\alpha) := \term_1 - \term_2.
\end{split}
 \end{equation}
 Consider $\term_1$ first. Since
 \begin{multline*}
\Dm^i\left[\partial_1F^i(\tau_{\alpha,i},u_{\alpha + e_i})w_\alpha
  + \partial_2F^i(v_\alpha,\theta_{\alpha,i})w_{\alpha +
    e_i}\right]
= \frac{1}{\Dx}\Bigl[\partial_1F^i(\tau_{\alpha,i},u_{\alpha + e_i})w_\alpha \\
-\partial_1F^i(\tau_{\alpha-e_i,i},u_\alpha)w_{\alpha-e_i}
+ \partial_2F^i(v_\alpha,\theta_{\alpha,i})w_{\alpha +
  e_i}-\partial_2F^i(v_{\alpha-e_i},\theta_{\alpha-e_i,i})w_{\alpha}\Bigr],
 \end{multline*}
 it follows that
 \begin{equation*}
\begin{aligned}
  \term_1 &=  \frac{1}{\Dx}\sum_\alpha\sum_{i = 1}^d
  \bigg[\partial_1F^i(\tau_{\alpha,i},u_{\alpha + e_i})|w_\alpha|
  -  \partial_1F^i(\tau_{\alpha-e_i,i},u_\alpha)\sign{w_\alpha}w_{\alpha-e_i} \\
  & \qquad
  + \partial_2F^i(v_\alpha,\theta_{\alpha,i})\sign{w_\alpha}w_{\alpha
    +
    e_i}-\partial_2F^i(v_{\alpha-e_i},\theta_{\alpha-e_i,i})|w_{\alpha}|\bigg]
  \\ 
  &= \frac{1}{\Dx}\sum_{i = 1}^d
  \Bigl[\sum_\alpha \partial_1F^i(\tau_{\alpha,i},u_{\alpha +
    e_i})|w_\alpha|
  -\sum_\alpha \partial_1F^i(\tau_{\alpha,i},u_{\alpha +
    e_i})\sign{w_{\alpha + e_i}}w_{\alpha} \\ 
  &\qquad  +
  \sum_\alpha \partial_2F^i(v_\alpha,\theta_{\alpha,i})
  \sign{w_\alpha}w_{\alpha+ e_i} 
  - \sum_\alpha \partial_2F^i(v_{\alpha},\theta_{\alpha,i})
  |w_{\alpha+ e_i}|\Bigr]
\end{aligned}
 \end{equation*}
 Since each $F^i$ is monotone, it follows that $\term_1 \ge 0$. 
 Considering $\term_2$, we have
 \begin{align*}
\term_2 & = \frac{1}{\Dx^2}\sum_{i = 1}^d \sum_\alpha \Bigl[
a(\xi_{\alpha + e_i})\sign{w_\alpha}w_{\alpha+e_i}  
\\ & \qquad\qquad\qquad\qquad 
-2a(\xi_\alpha)|w_\alpha| 
+ a(\xi_{\alpha-e_i})\sign{w_\alpha}w_{\alpha-e_i}\Bigr],
 \end{align*}
 from which it follows that $\term_2 \le 0$.
\end{proof}

\begin{lemma}\label{lem:SemiDiscProp}
  Suppose $F^i$ is monotone for each $ 1 \le i \le d$. For any
  positive $T$, there
  exists a unique solution $u = \{u_\alpha\}$ to
  \eqref{eq:Semi-discreteScheme} on $[0,T]$ with the properties:
  \begin{itemize}
  \item[(i)] $\|u(t)\|_1 \le \|u_0\|_1$.
  \item[(ii)] For every $\alpha \in \mathbb{Z}^d$ and $t \in [0,T]$,
    \begin{equation*}
      \inf_\beta\{u_{\beta,0}\} \le u_\alpha(t) \le \sup_\beta\{u_{\beta,0}\}.
    \end{equation*}
  \item[(iii)] $|u(t)|_{BV} \le |u_0|_{BV}$.
  \item[(iv)] If $v = \{v_\alpha\}$ is a solution of the same problem
    with initial data $v_0$, then
    \begin{equation*}
      \|u(t) - v(t)\|_1 \le \|u_0-v_0\|_1.
    \end{equation*}
  \end{itemize}
\end{lemma}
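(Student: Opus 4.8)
The plan is to handle local solvability by classical ODE theory and then to read off the four properties from the accretiveness of $\A$ established in the preceding lemma, supplemented by a discrete maximum principle. To begin, note that for each fixed $\Dx>0$ the map $\A\colon\ell^1(\Z^d)\to\ell^1(\Z^d)$ is locally Lipschitz, being a finite combination of the $C^1$ functions $F^i$ and $A$ composed with shift operators, the shifts acting isometrically on $\ell^1(\Z^d)$. The Picard--Lindel\"of theorem then produces a unique solution $u(\cdot)\in C^1([0,T_{\max});\ell^1(\Z^d))$ of $\frac{du}{dt}+\A(u)=0$, $u(0)=u_0$, on a maximal interval of existence.

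To obtain (iv), and hence (i), I would differentiate the $\ell^1$ functional along the flow. Writing $(u-v)'=-(\A(u)-\A(v))$ and using that each $t\mapsto u_\alpha(t)-v_\alpha(t)$ is $C^1$ while $\A(u)-\A(v)\in\ell^1(\Z^d)$, one justifies the termwise computation
\[
\frac{d}{dt}\,\|u(t)-v(t)\|_1=-\,\Dx^d\sum_\alpha \sign{u_\alpha-v_\alpha}\,(\A(u)-\A(v))_\alpha\le 0,
\]
the inequality being exactly the accretiveness of $\A$. Integrating in $t$ gives (iv). Taking $v\equiv 0$ and observing that $\A(0)=0$ (since $A(0)=0$ and $\Dm^i$ annihilates the constant $f^i(0)$) yields (i). The bound (i) keeps $u(t)$ in a fixed ball of $\ell^1(\Z^d)$, so no finite-time blow-up can occur and $T_{\max}=\infty$.

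For the pointwise bounds (ii) I would argue by a discrete maximum principle. Because $u(t)\in\ell^1(\Z^d)$, one has $u_\alpha(t)\to0$ as $\abs{\alpha}\to\infty$, so $\Phi(t):=\sup_\alpha u_\alpha(t)$ is attained whenever it is positive; at an index realizing the supremum, the upwind--centred structure together with $\partial_1F^i\ge0$, $\partial_2F^i\le0$ and the monotonicity of $A$ forces $(\A(u))_\alpha\ge0$, hence $\tfrac{d}{dt}u_\alpha(t)\le0$ there. A standard Dini-derivative argument then shows that $\Phi$ is nonincreasing as long as it is positive, so $\Phi(t)\le\max\{\Phi(0),0\}=\sup_\beta u_{\beta,0}$, the last equality because $u_0\in\ell^1$ forces $\sup_\beta u_{\beta,0}\ge0$; the lower bound follows symmetrically. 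Finally, (iii) follows from the translation invariance of the scheme: for each $k=1,\dots,d$ the shifted sequence $v_\alpha(t):=u_{\alpha+e_k}(t)$ solves the same system with data $\{u_{\alpha+e_k,0}\}$, so (iv) applied to $u,v$ and summation over $k$ give $\Dx^d\,\abs{u(t)}_{BV}\le\Dx^d\,\abs{u_0}_{BV}$, i.e.\ (iii).

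The genuinely delicate point is the justification of the termwise differentiation of the infinite sum $\sum_\alpha\abs{u_\alpha-v_\alpha}$ and of the non-smoothness of $\sign$ at the origin; this is the usual technical core of such arguments and is dispatched by a smooth monotone regularization of $\sign$ together with a dominated-convergence passage based on $\A(u)-\A(v)\in\ell^1(\Z^d)$ uniformly on compact time intervals. Alternatively, as anticipated before the statement, one may invoke directly the Crandall--Liggett theory of $\ell^1$-accretive operators, which packages precisely this step.
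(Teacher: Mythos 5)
Your proposal is correct in substance and, for parts (i), (iii), (iv), rests on exactly the same pillar as the paper: the accretiveness of $\A$ proved in the preceding lemma, combined with local existence from the Lipschitz continuity of $\A$ on $\ell^1(\Z^d)$ and global existence from the $\ell^1$ bound. The difference is in packaging. The paper simply invokes the contraction semigroup $\mathcal{S}(t)$ generated by the accretive operator $\A$ (via \cite{CrandallLiggett1971,Pavel1984,Sato1968}) to get (i), (iii), (iv) in one stroke, and cites \cite{ChambolleLucier1998} for the maximum principle (ii); you instead differentiate $\norm{u(t)-v(t)}_1$ along the flow by hand, deduce (i) from (iv) with $v\equiv 0$ (valid, since $\A(0)=0$), obtain (iii) from (iv) by translation invariance of the scheme (this is also what the semigroup argument implicitly uses), and prove (ii) by a direct discrete maximum principle at a maximizing index, using that $u(t)\in\ell^1(\Z^d)$ forces attainment of a positive supremum. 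Your version buys self-containedness at the price of the technical points you yourself flag: the identity $\tfrac{d}{dt}\norm{u-v}_1=-\Dx^d\sum_\alpha \sign{u_\alpha-v_\alpha}(\A(u)-\A(v))_\alpha$ is not literally an equality at times where some component vanishes with nonzero derivative, so it should be stated as a bound on the upper Dini derivative (or handled through the duality-set/semi-inner-product formulation of accretiveness, noting that the choice $\mathrm{sign}(0)=0$ does lie in the duality set), and the Dini-derivative argument for $\Phi(t)=\sup_\alpha u_\alpha(t)$ needs the observation that, when $\Phi(t)>0$, only finitely many indices are near-maximizers and that $u\in C^1([0,T];\ell^1)$ gives uniform-in-$\alpha$ control in time; both repairs are routine, and your fallback of invoking the Crandall--Liggett theory is precisely the paper's route. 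The sign check at a maximizer is correct: monotonicity of $F^i$ makes the upwind flux difference nonnegative and monotonicity of $A$ makes the discrete Laplacian term nonpositive, so $(\A(u))_\alpha\ge 0$ there.
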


\begin{proof}
  Parts (i),(iii) and (iv) follows since $\mathcal{S}(t)$ is a
  contraction semigroup. Part (ii) follows from
  \cite{ChambolleLucier1998}.
\end{proof}

Note that the $\ell^1$ bound in [(i)] implies that $u_\alpha(t)$
exists for all $t$, and not only for $t$ small.
\begin{lemma}\label{lem:semidisc_cont}
  Suppose $F^i$ is monotone for each $1 \le i \le d$. If $u$ is a
  solution to \eqref{eq:Semi-discreteScheme} and $\A(u_0) \in
  \ell^1(\Z^d)$, then for each $h > 0$,
  \begin{equation*}
    \|u(t + h)-u(t)\|_{\ell^1} \le \|\A(u_0)\|_{\ell^1} h.
  \end{equation*} 
\end{lemma}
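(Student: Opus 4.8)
The plan is to exploit the fact that the time-translated curve $u(\cdot+h)$ is again a solution of the semi-discrete system $\tfrac{du}{dt}+\A(u)=0$, namely the one issued from the datum $u(h)$. Since $\mathcal{S}(t)$ is an $\ell^1$-contraction semigroup (equivalently, by Lemma~\ref{lem:SemiDiscProp}(iv) applied to the two solutions $u(\cdot+h)$ and $u(\cdot)$ with data $u(h)$ and $u_0$), one gets
\begin{equation*}
  \norm{u(t+h)-u(t)}_{\ell^1} \le \norm{u(h)-u_0}_{\ell^1}
  \qquad \text{for all } t\ge 0 .
\end{equation*}
Hence it suffices to show $\norm{u(h)-u_0}_{\ell^1}\le h\,\norm{\A(u_0)}_{\ell^1}$, and in particular the whole estimate is reduced to controlling the single quantity $\norm{u(\cdot)-u_0}_{\ell^1}$ near $0$.

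Next, because $\A$ is Lipschitz on $\ell^1$ for fixed $\Dx$, Picard--Lindelöf theory in Banach space gives that $t\mapsto u(t)$ is continuously differentiable as an $\ell^1$-valued map with $u'(s)=-\A(u(s))$; the fundamental theorem of calculus for Bochner integrals then yields
\begin{equation*}
  \norm{u(h)-u_0}_{\ell^1} = \Bigl\|\int_0^h u'(s)\,ds\Bigr\|_{\ell^1}
  \le \int_0^h \norm{\A(u(s))}_{\ell^1}\,ds .
\end{equation*}
It remains to prove $\norm{\A(u(s))}_{\ell^1}\le \norm{\A(u_0)}_{\ell^1}$ for every $s\ge 0$. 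Writing $\A(u(s))=-u'(s)=-\lim_{k\downarrow 0}k^{-1}\bigl(u(s+k)-u(s)\bigr)$ with the limit in $\ell^1$, the contraction estimate from the first paragraph gives
\begin{equation*}
  \norm{\A(u(s))}_{\ell^1} = \lim_{k\downarrow 0}\frac{\norm{u(s+k)-u(s)}_{\ell^1}}{k}
  \le \lim_{k\downarrow 0}\frac{\norm{u(k)-u_0}_{\ell^1}}{k}
  = \norm{u'(0)}_{\ell^1} = \norm{\A(u_0)}_{\ell^1}.
\end{equation*}
Chaining the three displays gives $\norm{u(t+h)-u(t)}_{\ell^1}\le h\,\norm{\A(u_0)}_{\ell^1}$, which is the claim.

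The only delicate point, and the place where the hypothesis $\A(u_0)\in\ell^1(\Z^d)$ is actually used, is the assertion that $u$ is a genuine $C^1$ curve in $\ell^1$ with $\A(u(s))$ remaining in $\ell^1$ along the orbit; once $\A$ is legitimately viewed as a Lipschitz self-map of $\ell^1$ (as in the preceding discussion and Lemma~\ref{lem:SemiDiscProp}), this regularity and the identity $u'(s)=-\A(u(s))$ are standard, and the differentiation-of-difference-quotients step above is justified. Everything else is a soft consequence of the contraction semigroup structure already established. I expect this $C^1$-in-$\ell^1$ bookkeeping to be the main (mild) obstacle; the estimates themselves are immediate.
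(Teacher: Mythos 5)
Your argument is correct, but it is not the route the paper takes. You prove the key fact $\norm{\A(u(s))}_{\ell^1}\le\norm{\A(u_0)}_{\ell^1}$ ``softly'': time-translation invariance makes $u(\cdot+k)$ a solution with datum $u(k)$, the already established $\ell^1$-contraction (Lemma~\ref{lem:SemiDiscProp}(iv)) gives $\norm{u(s+k)-u(s)}_{\ell^1}\le\norm{u(k)-u_0}_{\ell^1}$, and dividing by $k$ and letting $k\downarrow 0$ transfers the bound to $u'(s)=-\A(u(s))$; integrating then yields the Lipschitz estimate. The paper instead proves the same monotonicity claim \eqref{eq:TimeDerDecrease} by brute force: it differentiates $\norm{u'(t)}_{\ell^1}=\norm{\A(u(t))}_{\ell^1}$ in $t$, computes $\partial_t\A(u(t))_\alpha$ explicitly from the scheme, and observes that the resulting sum against $\sgn(\A(u(t))_\alpha)$ has the same sign structure as the accretivity computation \eqref{eq:AccretiveComp}, hence is nonpositive. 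Your approach buys a cleaner justification: it reuses the contraction property rather than repeating the accretivity-type computation, and it avoids the delicate step of differentiating an $\ell^1$-norm given as an infinite sum (term-by-term differentiation and the behaviour of $\abs{\cdot}$ at zeros), which the paper passes over quickly. The price is that you lean on the $C^1$-in-$\ell^1$ regularity of the orbit and on $\A$ being (locally) Lipschitz on $\ell^1$ along the solution; as you note, this is exactly what the appendix already provides (local Lipschitzness for fixed $\Dx$ on the $\ell^\infty$-bounded range guaranteed by Lemma~\ref{lem:SemiDiscProp}(ii)), and the hypothesis $\A(u_0)\in\ell^1(\Z^d)$ is what anchors the difference-quotient limit at $t=0$. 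So the proposal is a valid, and arguably more robust, alternative proof.
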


\begin{proof}
Suppose that $\|u'(t)\| \le C$. Then
\begin{equation*}
	\|u(t+h)-u(t)\| = \left\|\int_t^{t+h}u'(s)\,ds\right\| \le \int_t^{t+h}\|u'(s)\|\,ds \le Ch,
\end{equation*}
and so Lipschitz continuity would follow. We claim that
\begin{equation}\label{eq:TimeDerDecrease}
\frac{\partial}{\partial t}\|u'(t)\| \le 0.
\end{equation}
Indeed,
\begin{equation*}
\begin{split}
 \frac{\partial}{\partial t}\|u'(t)\| &=
 \frac{\partial}{\partial t}\|\mathcal{A}(u(t))\| \\ 
 &=  \frac{\partial}{\partial t}\Bigl[\Dx^d\sum_{\alpha}
 \sign{\A(u(t))_\alpha}\A(u(t))_\alpha\Bigr]\\ 
 &= \Dx^d\sum_{\alpha}
 \sign{\A(u(t))_\alpha}\partial_t\A(u\alpha(t))_\alpha,
\end{split}
\end{equation*}
and
\begin{equation*}
\begin{split}
 & \partial_t\A(u(t))_\alpha 
 = \frac{\partial}{\partial t}\sum_{i = 1}^d  
 \Dm^i\left[F^i(u_\alpha(t),u_{\alpha +e_i}(t))
 -\Dp^iA(u_\alpha(t))\right] \\
 &\, = \sum_{i = 1}^d 
 \Dm^i\left[\partial_1F^i(u_\alpha(t),u_{\alpha
  +e_i}(t))u_\alpha'(t) + \partial_2F^i(u_\alpha(t),u_{\alpha
  +e_i}(t))u_{\alpha +e_i}'(t)\right] \\ 
 & \qquad \qquad -\sum_{i = 1}^d
 \Dm^i\Dp^ia(u_\alpha(t))u_\alpha'(t) \\ 
 &\, = -\sum_{i = 1}^d
 \Dm^i\left[\partial_1F^i(u_\alpha(t),u_{\alpha
  +e_i}(t))\A(u(t))_\alpha
+ \partial_2F^i(u_\alpha(t),u_{\alpha
  +e_i}(t))\A(u(t))_{\alpha +e_i}\right] \\ 
 & \qquad \qquad +\sum_{i=1}^d
 \Dm^i\Dp^ia(u_\alpha(t))\A(u(t))_\alpha.
\end{split}
\end{equation*}
Considering the similarity between this computation and
\eqref{eq:AccretiveComp}, it is seen 
that \eqref{eq:TimeDerDecrease} holds. We conclude 
that $\|u'(t)\| \le \|\A(u_0)\|$ and so the lemma follows.
\end{proof}

%\bibliography{multiDpaper.bib}{} \bibliographystyle{abbrv}

\begin{thebibliography}{10}

\bibitem{Andreianov:2009kx}
B.~Andreianov, M.~Bendahmane, and K.~H. Karlsen.
\newblock Discrete duality finite volume schemes for doubly nonlinear
  degenerate hyperbolic-parabolic equations.
\newblock {\em J. Hyperbolic Differ. Equ.}, 7(1):1--67, 2010.

\bibitem{Andreianov:2012fk}
B.~Andreianov and N.~Igbida.
\newblock On uniqueness techniques for degenerate convection-diffusion
  problems.
\newblock {\em Int. J. Dyn. Syst. Differ. Equ.}, 4(1-2):3--34, 2012.

\bibitem{Aregba-Driollet:2003}
D.~Aregba-Driollet, R.~Natalini, and S.~Tang.
\newblock Explicit diffusive kinetic schemes for nonlinear degenerate parabolic
  systems.
\newblock {\em Math. Comp.}, 73(245):63--94 (electronic), 2004.

\bibitem{Barles:2005sc}
G.~Barles and E.~R. Jakobsen.
\newblock Error bounds for monotone approximation schemes for
  {H}amilton-{J}acobi-{B}ellman equations.
\newblock {\em SIAM J. Numer. Anal.}, 43(2):540--558 (electronic), 2005.

\bibitem{Bouchut:2000dp}
F.~Bouchut, F.~R. Guarguaglini, and R.~Natalini.
\newblock Diffusive {BGK} approximations for nonlinear multidimensional
  parabolic equations.
\newblock {\em Indiana Univ. Math. J.}, 49(2):723--749, 2000.

\bibitem{Bouchut:1998ys}
F.~Bouchut and B.~Perthame.
\newblock Kru\v zkov's estimates for scalar conservation laws revisited.
\newblock {\em Trans. Amer. Math. Soc.}, 350(7):2847--2870, 1998.

\bibitem{Caffarelli:2008aa}
L.~A. Caffarelli and P.~E. Souganidis.
\newblock A rate of convergence for monotone finite difference approximations
  to fully nonlinear, uniformly elliptic {PDE}s.
\newblock {\em Comm. Pure Appl. Math.}, 61(1):1--17, 2008.

\bibitem{Carrillo:1999hq}
J.~Carrillo.
\newblock Entropy solutions for nonlinear degenerate problems.
\newblock {\em Arch. Ration. Mech. Anal.}, 147(4):269--361, 1999.

\bibitem{ChambolleLucier1998}
A.~Chambolle and B.~J. Lucier.
\newblock Un principe du maximum pour des op\'erateurs monotones.
\newblock {\em C. R. Acad. Sci. Paris S\'er. I Math.}, 326(7):823--827, 1998.

\bibitem{Chen:2005wf}
G.-Q. Chen and K.~H. Karlsen.
\newblock Quasilinear anisotropic degenerate parabolic equations with
  time-space dependent diffusion coefficients.
\newblock {\em Commun. Pure Appl. Anal.}, 4(2):241--266, 2005.

\bibitem{Chen:2006oy}
G.-Q. Chen and K.~H. Karlsen.
\newblock {$L^1$}-framework for continuous dependence and error estimates for
  quasilinear anisotropic degenerate parabolic equations.
\newblock {\em Trans. Amer. Math. Soc.}, 358(3):937--963, 2006.

\bibitem{ChenPerthame2003}
G.-Q. Chen and B.~Perthame.
\newblock Well-posedness for non-isotropic degenerate parabolic-hyperbolic
  equations.
\newblock {\em Ann. Inst. H. Poincar\'e Anal. Non Lin\'eaire}, 20(4):645--668,
  2003.

\bibitem{Cockburn:2003ys}
B.~Cockburn.
\newblock Continuous dependence and error estimation for viscosity methods.
\newblock {\em Acta Numer.}, 12:127--180, 2003.

\bibitem{CrandallLiggett1971}
M.~G. Crandall and T.~M. Liggett.
\newblock Generation of semi-groups of nonlinear transformations on general
  {B}anach spaces.
\newblock {\em Amer. J. Math.}, 93:265--298, 1971.

\bibitem{CranLions:FDM84}
M.~G. Crandall and P.-L. Lions.
\newblock Two approximations of solutions of {H}amilton-{J}acobi equations.
\newblock {\em Math. Comp.}, 43(167):1--19, 1984.

\bibitem{Dafermos:2010fk}
C.~M. Dafermos.
\newblock {\em Hyperbolic conservation laws in continuum physics}, volume 325
  of {\em Grundlehren der Mathematischen Wissenschaften [Fundamental Principles
  of Mathematical Sciences]}.
\newblock Springer-Verlag, Berlin, third edition, 2010.

\bibitem{Evje:2000ix}
S.~Evje and K.~H. Karlsen.
\newblock Discrete approximations of {BV} solutions to doubly nonlinear
  degenerate parabolic equations.
\newblock {\em Numer. Math.}, 86(3):377--417, 2000.

\bibitem{EvjeKarlsen2000}
S.~Evje and K.~H. Karlsen.
\newblock Monotone difference approximations of {BV} solutions to degenerate
  convection-diffusion equations.
\newblock {\em SIAM J. Numer. Anal.}, 37(6):1838--1860 (electronic), 2000.

\bibitem{EvjeKarlsen2002}
S.~Evje and K.~H. Karlsen.
\newblock An error estimate for viscous approximate solutions of degenerate
  parabolic equations.
\newblock {\em J. Nonlinear Math. Phys.}, 9(3):262--281, 2002.

\bibitem{Eymard:2002eu}
R.~Eymard, T.~Gallou{\"e}t, and R.~Herbin.
\newblock Error estimate for approximate solutions of a nonlinear
  convection-diffusion problem.
\newblock {\em Adv. Differential Equations}, 7(4):419--440, 2002.

\bibitem{Eymard:2002nx}
R.~Eymard, T.~Gallou{\"e}t, R.~Herbin, and A.~Michel.
\newblock Convergence of a finite volume scheme for nonlinear degenerate
  parabolic equations.
\newblock {\em Numer. Math.}, 92(1):41--82, 2002.

\bibitem{Holden:2010fk}
H.~Holden, K.~H. Karlsen, K.-A. Lie, and N.~H. Risebro.
\newblock {\em Splitting methods for partial differential equations with rough
  solutions}.
\newblock EMS Series of Lectures in Mathematics. European Mathematical Society
  (EMS), Z\"urich, 2010.
\newblock Analysis and MATLAB programs.

\bibitem{KKR2012}
K.~H. Karlsen, U.~Koley, and N.~H. Risebro.
\newblock An error estimate for the finite difference approximation to
  degenerate convection-diffusion equations.
\newblock {\em Numer. Math.}, 121(2):367--395, 2012.

\bibitem{Karlsen:2001ul}
K.~H. Karlsen and N.~H. Risebro.
\newblock Convergence of finite difference schemes for viscous and inviscid
  conservation laws with rough coefficients.
\newblock {\em M2AN Math. Model. Numer. Anal.}, 35(2):239--269, 2001.

\bibitem{KRS2014}
K.~H. Karlsen, N.~H. Risebro, and E.~B. Storr{\o}sten.
\newblock {$L^1$} error estimates for difference approximations of degenerate
  convection-diffusion equations.
\newblock {\em Math. Comp.}, 83(290):2717--2762, 2014.

\bibitem{Kruzkov:1970kx}
S.~N. Kru{\v{z}}kov.
\newblock First order quasilinear equations with several independent variables.
\newblock {\em Mat. Sb. (N.S.)}, 81 (123):228--255, 1970.

\bibitem{Krylov:2005lj}
N.~V. Krylov.
\newblock The rate of convergence of finite-difference approximations for
  {B}ellman equations with {L}ipschitz coefficients.
\newblock {\em Appl. Math. Optim.}, 52(3):365--399, 2005.

\bibitem{Kuznetsov}
N.~N. Kuznecov.
\newblock The accuracy of certain approximate methods for the computation of
  weak solutions of a first order quasilinear equation.
\newblock {\em \v Z. Vy\v cisl. Mat. i Mat. Fiz.}, 16(6):1489--1502, 1627,
  1976.

\bibitem{Lions:1994qy}
P.-L. Lions, B.~Perthame, and E.~Tadmor.
\newblock A kinetic formulation of multidimensional scalar conservation laws
  and related equations.
\newblock {\em J. Amer. Math. Soc.}, 7(1):169--191, 1994.

\bibitem{Makridakis:2003aa}
C.~Makridakis and B.~Perthame.
\newblock Optimal rate of convergence for anisotropic vanishing viscosity limit
  of a scalar balance law.
\newblock {\em SIAM J. Math. Anal.}, 34(6):1300--1307, 2003.

\bibitem{Ohlberger:2001oq}
M.~Ohlberger.
\newblock A posteriori error estimates for vertex centered finite volume
  approximations of convection-diffusion-reaction equations.
\newblock {\em M2AN Math. Model. Numer. Anal.}, 35(2):355--387, 2001.

\bibitem{Pavel1984}
N.~H. Pavel.
\newblock {\em Differential equations, flow invariance and applications},
  volume 113 of {\em Research Notes in Mathematics}.
\newblock Pitman (Advanced Publishing Program), Boston, MA, 1984.

\bibitem{Perthame1998}
B.~Perthame.
\newblock Uniqueness and error estimates in first order quasilinear
  conservation laws via the kinetic entropy defect measure.
\newblock {\em J. Math. Pures Appl. (9)}, 77(10):1055--1064, 1998.

\bibitem{Sato1968}
K.~Sato.
\newblock On the generators of non-negative contraction semigroups in {B}anach
  lattices.
\newblock {\em J. Math. Soc. Japan}, 20:423--436, 1968.

\bibitem{VolpertHudjaev1969}
A.~I. Vol{\cprime}pert and S.~I. Hudjaev.
\newblock The {C}auchy problem for second order quasilinear degenerate
  parabolic equations.
\newblock {\em Mat. Sb. (N.S.)}, 78 (120):374--396, 1969.

\bibitem{Wu:1989ve}
Z.~Q. Wu and J.~X. Yin.
\newblock Some properties of functions in {$BV_x$} and their applications to
  the uniqueness of solutions for degenerate quasilinear parabolic equations.
\newblock {\em Northeast. Math. J.}, 5(4):395--422, 1989.

\end{thebibliography}

\def\cprime{$'$}

\end{document}